\newcommand{\lamt}{\lambda_{\theta}^{(\eta+t\alpha)}}
\newcommand{\lame}{\lambda_e^{(\eta+t\alpha)}}
\newcommand{\lamte}{\lambda_{\theta}^{(\eta)}}
\newcommand{\lamee}{\lambda_e^{(\eta)}}
\newcommand{\pcite}[1]{\citeauthor{#1}'s \citeyearpar{#1}}
\newcommand{\df}{\mathrm{d}}
\def\baro{\vskip  .2truecm\hfill \hrule height.5pt \vskip  .2truecm}
\def\barba{\vskip -.1truecm\hfill \hrule height.5pt \vskip .4truecm}
\newtheorem{theorem}{Theorem}
\newtheorem{lemma}[theorem]{Lemma}
\newtheorem{corollary}[theorem]{Corollary}
\newtheorem{remark}[theorem]{Remark}
\newtheorem{proposition}[theorem]{Proposition}
\newcommand{\X}{{\mathsf{X}}}
\title{Wasserstein-based methods for convergence complexity analysis
	of MCMC with applications}
	\author{Qian Qin \\ School of Statistics \\ University of Minnesota 
	\and James P. Hobert \\ Department of Statistics \\ University of Florida}
	\keywords{Coupling, Drift condition, Geometric ergodicity, High dimensional inference, Minorization condition, Random mapping}
\begin{document}

	\maketitle
		
		\begin{abstract}
			Over the last 25 years, techniques based on drift and minorization
			(d\&m) have been mainstays in the convergence analysis of MCMC
			algorithms.  However, results presented herein suggest that d\&m may
			be less useful in the emerging area of convergence complexity
			analysis, which is the study of how the convergence behavior of Monte Carlo Markov chains scales with sample size,~$n$, and/or number of
			covariates,~$p$.  The problem appears to be that minorization
			can become a serious liability as dimension increases.  Alternative
			methods of constructing convergence rate bounds (with respect to
			total variation distance) that do not require minorization are
			investigated.  Based on
			Wasserstein distances and random mappings, these methods can produce bounds that
			are substantially more robust to increasing dimension than those based
			on d\&m.  The Wasserstein-based bounds are used to develop
			strong convergence complexity results for MCMC algorithms used in Bayesian probit regression and random effects models in the challenging asymptotic regime where $n$ and
			$p$ are both large.
		\end{abstract}

	\section{Introduction}
	
	Markov chain Monte Carlo (MCMC) has become an indispensable tool in
	Bayesian statistics, and it is now well-known that the performance of
	an MCMC algorithm depends heavily on the convergence properties of the
	underlying Markov chain.  In the era of big data, it is no longer
	enough to understand the convergence behavior of a given algorithm for
	each fixed data set.  Indeed, there is now growing interest in the
	so-called convergence complexity of MCMC algorithms, which describes
	how the convergence rate of the Markov chain scales with the sample
	size, $n$, and/or the number of covariates, $p$, in the associated
	data set \citep[see,
	e.g.][]{rajaratnam2015mcmc,yang2016computational,yang2017complexity,durmus2019high,johndrow2016mcmc,qin2019convergence}.
	While techniques based on drift and minorization (d\&m) conditions
	have been mainstays in the analysis of MCMC algorithms for decades, we
	show that these techniques can fail in convergence
	complexity analysis.  We consider alternative methods based on
	Wasserstein distance and coupling, and we demonstrate the potential
	advantages of such methods over those based on d\&m through
	convergence complexity analyses of \pcite{albert1993bayesian}
	algorithm for Bayesian probit regression and a Gibbs sampling algorithm for a Bayesian random effects model.
	
	Let~$\Pi$ denote an intractable posterior distribution on some state
	space~$\X$, and consider a Monte Carlo Markov chain (with stationary
	probability measure~$\Pi$) that is driven by the Markov transition
	function (Mtf) $K(\cdot,\cdot)$.  For a positive integer~$m$ and
	$x \in \X$, let $K^m(\cdot,\cdot)$ be the $m$-step Mtf, and let
	$K_x^m$ be the distribution defined by $K^m(x,\cdot)$.  (See
	Section~\ref{sec:liability} for formal definitions.)  Convergence of
	this chain can be assessed by the rate at which $D(K_x^m, \Pi)$
	decreases as~$m$ grows, where~$D$ is some distance function between
	probability measures.  (Traditionally, the most commonly used~$D$ is
	the total variation distance.)  If there exist $\rho < 1$ and $M: \X
	\to [0,\infty)$ such that
	\begin{equation}
		\label{ine:geometric}
		D(K_x^m, \Pi) \leq M(x) \rho^m \;, \; m \geq 1 \;, \; x \in \X \;,
	\end{equation}
	then we say that the chain converges geometrically in~$D$.  The rate
	of convergence, $\rho_* \in [0,1]$, is defined to be the infimum of
	the $\rho$s that satisfy~\eqref{ine:geometric} for some $M(\cdot)$.
	Given a sequence of (growing) data sets, and corresponding sequences
	of posterior distributions and Monte Carlo Markov chains, we are
	interested in the asymptotic behavior of~$\rho_*$.  
	Our program entails constructing an upper bound on $\rho_*$, which we
	call $\hat{\rho}$, and then considering the asymptotic behavior of this
	bound.
	Of course, the bound should be sufficiently sharp so that it
	correctly reflects the asymptotic dynamics of~$\rho_*$.
	
	When~$D$ is the total variation (TV) distance, $\hat{\rho}$ is often
	constructed via d\&m arguments \citep[see,
	e.g.,][]{rosenthal1995minorization}.  It is well-known that the
	resulting bounds are often overly conservative \citep{meyn1994computable,jones2001honest}, especially in high
	dimensional settings \citep{rajaratnam2015mcmc}.  
	While
	there have been a few successful d\&m-based convergence complexity
	analyses of MCMC algorithms
	\citep{yang2017complexity,qin2019convergence}, our results
	suggest that techniques based solely on d\&m are unlikely to be at the
	forefront of convergence complexity analysis going forward.  
	Indeed,
	we consider a family of simple autoregressive processes, and show
	that, in this context, a standard d\&m-based method cannot possibly
	produce sharp bounds on~$\rho_*$ as dimension increases, unless one utilizes information that is not typically available in realistic settings.  
	Compared to previous accounts of the conservativeness of d\&m bounds, our example is unique in that it is independent of the choice of drift function and small set.
	Indeed, the problems
	exhibited by the d\&m bounds in this example are so fundamental that
	they cannot be avoided no matter how hard one tries to find good d\&m
	conditions.  The culprit appears to be the minorization condition,
	which, at least in this example, becomes a serious liability as dimension increases.
	Moreover, subsequent to our work, the observations we make in this
	example were strengthened and generalized by \cite{qin2020limitations}, who
	developed a general framework for establishing negative results
	regarding d\&m-based bounds for high dimensional Markov chains.

	Recent results suggest that convergence complexity analysis becomes
	more tractable when the TV distance is replaced with an
	appropriate Wasserstein distance \citep[see,
	e.g.,][]{hairer2011asymptotic,durmus2015quantitative,mangoubi2017rapid,trillos2017consistency}.
	This could be (at least partly) due to the fact that minorization
	conditions are typically not used to bound Wasserstein distances.  In
	this paper, we explore the possibility of performing convergence
	complexity analysis of MCMC algorithms under the TV distance
	using a technique developed in \cite{madras2010quantitative} to
	convert Wasserstein bounds into total variation bounds.  
	Although there
	is a substantial literature on bounding the Wasserstein distance to
	stationarity for general Markov chains \citep[see,
	e.g.,][]{gibbs2004convergence,ollivier2009ricci,hairer2011asymptotic,butkovsky2014subgeometric,durmus2015quantitative,douc2018markov,qin2019geometric},
	applying these methods to practically relevant Monte Carlo Markov chains, whose transition laws are often quite complex, remains challenging.
	A notable exception is chains that simulate the dynamics of physical particles, such as those associated with Langevin algorithms and Hamiltonian Monte Carlo, for which researchers have been able to conduct convergence analysis via Wasserstein-based methods \citep[see, e.g.,][]{durmus2015quantitative,mangoubi2017rapid,bou2020coupling,monmarche2020high}.
	However, for many other types of commonly-used Monte Calro Markov chains, e.g., Gibbs and data augmentation chains, such applications are virtually nonexistent.
	A key difference between Gibbs-type Markov chains and those based on
	physical dynamics is that the stochastic component of the former is
	often more complex, involving distributions other than just normal and
	uniform.  
	This brings unique challenges to the construction and analysis
	of coupling kernels, which are crucial to the application of
	Wasserstein-based methods.
	We address these challenges by leveraging
	random mapping techniques from \cite{steinsaltz1999locally} to facilitate the
	application of results in \cite{ollivier2009ricci} and \cite{qin2019geometric}.
	These results are then applied to \pcite{albert1993bayesian} data
	augmentation algorithm for Bayesian probit regression, and also to a
	Gibbs sampling algorithm for a Bayesian random effects model.

	Previous work on the convergence rate of the  \cite{albert1993bayesian} (A\&C) chain includes \cite{roy2007convergence}, \cite{chakraborty2016convergence}, and  \cite{johndrow2016mcmc}, as well as a recent d\&m-based convergence complexity analysis by \cite{qin2019convergence} (Q\&H).
	A detailed discussion of the results in the first three papers is provided in Section~\ref{SEC:AC}.
	We now illustrate that the Wasserstein-to-TV method can produce bounds that are more robust to increasing dimension than those based on standard d\&m arguments by providing a high-level comparison of our results for the A\&C chain and those of Q\&H.
	Q\&H
	partially circumvented the problems associated with minorization through a dimension reduction trick.
	Roughly, the two main
	results in Q\&H  (which concern convergence in total variation) are as follows:
	$(1)$ When~$p$ is fixed, under mild conditions on the data structure,
	$\limsup_{n \to \infty} \rho_* \leq \rho_p$ for some $\rho_p < 1$.
	$(2)$ When~$n$ is fixed, $\limsup_{p \to \infty} \rho_* \leq \rho_n$
	for some $\rho_n < 1$, provided that the prior distribution provides
	sufficiently strong shrinkage.  Unfortunately, one can show that
	$\rho_p \to 1$ as $p \to \infty$, and that $\rho_n \to 1$ as $n \to
	\infty$.  Thus, Q\&H were not able to provide useful asymptotic
	results on~$\rho_*$ when~$n$ and~$p$ are \textit{both} large. 
	In contrast,  we
	are able to employ the Wasserstein-to-TV method to get positive results in
	this challenging asymptotic regime.  Our results for the A\&C chain
	can loosely be described as follows.  $(1')$ When~$p$ is fixed, under
	certain sparsity assumptions on the true regression coefficients (and
	some regularity conditions), $\limsup_{n \to \infty} \rho_* \leq \rho$
	for some $\rho < 1$ independent of~$p$.  $(2')$ When the prior
	provides enough shrinkage, $\rho_* \leq \rho$ for some $\rho < 1$
	independent of~$n$ and~$p$.  $(3')$ When rows of the design matrix are
	duplicated, under sparsity assumptions on the true regression
	coefficients (and some regularity conditions),~$\rho_*$ is bounded
	above by some $\rho < 1$ with high probability as~$n$ and~$p$ grow
	simultaneously at some appropriate joint rate. 
	None of these three results can be established using the d\&m-based bounds in Q\&H, which are asymptotically trivial when~$n$ and~$p$ both go to infinity (in any order).
	We also
	establish some non-asymptotic results, one of them being that the A\&C
	chain converges geometrically in the Wasserstein distance induced by
	the Euclidean norm for any finite data set.
	
	The second realistic example that we study is a Gibbs algorithm for a Bayesian random effects model.
	There have been several studies on the convergence properties of this algorithm \citep{hobert1998geometric,tan2009block,roman2012convergence}, and they will be described in Section~\ref{SEC:RANDOMEFFECTS}.
	No existing result provides quantitative bounds on $\rho_*$ that are well-behaved for large~$n$ and~$p$.
	Using Wasserstein-to-TV bounds, we are able to show that, under reasonable assumptions,~$\rho_*$ converges to~$0$ when~$n$ grows faster than a polynomial of~$p$.
	
	Our results for the A\&C algorithm and the random effects Gibbs chain are among the first of their kind,
	providing strong asymptotic statements on convergence rates for
	practically relevant Monte Carlo Markov chains in the case where
	both~$n$ and~$p$ are large.  We believe that many of our ideas and
	techniques are potentially applicable to other similar high
	dimensional problems.
	
	The remainder of this article is structured as follows.  In
	Section~\ref{sec:liability}, we illustrate how d\&m-based methods can fail
	in high dimensional settings by studying a simple
	autoregressive process.  Alternative methods based on random
	mappings and Wasserstein distance are the topic
	of Section~\ref{sec:wass}.  Our analysis of the A\&C chain is presented
	in Section~\ref{SEC:AC}.  
	Section~\ref{SEC:RANDOMEFFECTS} contains our analysis of the random effects Gibbs chain.
	Many of the technical details are relegated to an Appendix.

	\section{Minorization Can Become a Liability as Dimension Increases}
	\label{sec:liability}
	
	Let $(\X, \mathcal{B})$ be a countably generated measurable space.  We
	consider a discrete-time time-homogeneous Markov chain on~$\X$ with
	Markov transition function (Mtf) $K:\X \times \mathcal{B} \to [0,1]$.  For an integer $m \geq 0$,
	let $K^m: \X \times \mathcal{B} \to [0,1]$ be the corresponding
	$m$-step Mtf, so that for any $x \in \X$ and $A \in \mathcal{B}$, $K^0(x,A) = 1_{x \in A}$, $K^1(x,A) = K(x,A)$, and
	\[
	K^{m+1}(x,A) = \int_{\X} K(y,A) \,
	K^m(x, \df y) \;.
	\]
	Let $K_x^m, \; x \in \X\,,$ denote the probability measure defined by
	$K^m(x,\cdot)$, and denote $K_x^1$ by $K_x$.  We assume that the Markov chain is Harris ergodic,
	i.e., irreducible, aperiodic, and positive Harris recurrent \citep[see, e.g.,][]{meyn2012markov}.  Thus, the chain
	has a unique stationary distribution~$\Pi$ to which it converges.  The
	difference between $K_x^m$ and~$\Pi$ is most commonly measured by the
	total variation (TV) distance, $\|K_x^m - \Pi\|_{\mbox{{\tiny TV}}}$, which
	is the supremum of their discrepancy over measurable sets.  The
	associated convergence rate, denoted by $\rho_*^{\mbox{{\tiny TV}}}$,
	is defined as the infimum of $\rho \in [0,1]$ that
	satisfy~\eqref{ine:geometric} when~$D$ is the TV distance.  A standard
	technique for constructing upper bounds on $\rho_*^{\mbox{{\tiny
				TV}}}$ is based on drift and minorization (d\&m) conditions.  One of the most well-known
	examples of this method is due to \citet{rosenthal1995minorization},
	whose result is now stated.
	
	\begin{proposition} \label{pro:rosen} \citep{rosenthal1995minorization}
		Suppose that 
		\begin{enumerate}
			\item[$(A1)$] there exist $\lambda < 1$, $L <
			\infty$, and a function $V: \X \to [0,\infty)$ such
			that
			\[
			\int_{\X}  V(x') K(x, \df x')\leq \lambda V(x) + L
			\]
			for all $x \in \X \;$;
			\item[$(A2)$] there exist $d > 2L/(1-\lambda)$,
			$\gamma < 1$ and a probability measure $\nu:
			\mathcal{B} \to [0,1]$ such that, for every $x \in C
			:= \{x' \in \X: V(x') \leq d\}$, $K(x,\cdot) \geq
			(1-\gamma)\nu(\cdot)$.
		\end{enumerate}
		Then 
		for all $x \in \X$ and $m \geq 0$,
		\begin{equation} \nonumber
			\begin{aligned}
				&\| K_x^m - \Pi\|_{\mbox{{\tiny TV}}} \\ &\leq \gamma ^{am} +
				\left( 1 + \frac{L}{1-\lambda} + V(x) \right) \left\{ \left(
				\frac{1+2L+\lambda d}{1 + d} \right)^{1-a} \left[ 1 +
				2(\lambda d + L) \right]^a \right\}^m ,
			\end{aligned}
		\end{equation}
		where $a \in (0,1)$ is arbitrary.
	\end{proposition}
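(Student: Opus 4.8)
The plan is to prove the bound by a coupling argument, exploiting the fact that the total variation distance to stationarity is dominated by the probability that two appropriately coupled copies of the chain have not yet coalesced. First I would construct a bivariate Markov chain $\{(X_k, X_k')\}_{k \geq 0}$ with $X_0 = x$ and $X_0' \sim \Pi$, each coordinate marginally driven by $K$, as follows. Whenever the pair lies in $C \times C$, I split the kernel using the minorization condition $(A2)$, writing $K(\cdot,\cdot) = (1-\gamma)\nu(\cdot) + \gamma\,\mathrm{Res}(\cdot,\cdot)$: with probability $1-\gamma$ both chains draw a common value from $\nu$ and coalesce, while with probability $\gamma$ they move independently according to the residual kernel $\mathrm{Res}$; outside $C \times C$ they move independently according to $K$, and once coalesced they move together. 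The coupling inequality then gives $\|K_x^m - \Pi\|_{\mbox{{\tiny TV}}} \leq P(X_m \neq X_m')$, so it remains to bound the right-hand side.

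The key preliminary computation is a bivariate drift estimate for $W_k := 1 + V(X_k) + V(X_k')$. Summing $(A1)$ over the two (marginally $K$-driven) coordinates gives $E[W_{k+1} \mid \mathcal{F}_k] \leq 1 + 2L + \lambda(V(X_k) + V(X_k'))$. From this I would extract two consequences. On $(C \times C)^c$ at least one coordinate has $V > d$, so $V(X_k) + V(X_k') \geq d$, and a short monotonicity argument (using $\lambda < 1$) yields $E[W_{k+1} \mid \mathcal{F}_k] \leq \gamma_0 W_k$ with $\gamma_0 = (1 + 2L + \lambda d)/(1+d)$; note that $\gamma_0 < 1$ precisely because $d > 2L/(1-\lambda)$. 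On $C \times C$ both coordinates satisfy $V \leq d$, so unconditionally $E[W_{k+1} \mid \mathcal{F}_k] \leq B := 1 + 2(\lambda d + L)$. These identify the two bases $\gamma_0$ and $B$ appearing in the statement.

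Next I would split according to the number $N_m$ of visits of the pair to $C \times C$ before time $m$, writing $P(X_m \neq X_m') \leq P(X_m \neq X_m',\, N_m \geq am) + P(N_m < am)$. For the first term, the process $\gamma^{-N_k}\mathbf{1}\{X_k \neq X_k'\}$ is a supermartingale, since each visit to $C \times C$ is an independent coalescence trial with failure probability $\gamma$; this yields $P(X_m \neq X_m',\, N_m \geq am) \leq \gamma^{am}$. For the second term, I would show that $\tilde{M}_k := W_k\,\gamma_0^{-(k - N_k)} B^{-N_k}$ is a supermartingale, using the two drift bounds above together with $W_k \geq 1$. Since $E[\tilde{M}_0] = 1 + V(x) + E_\Pi V \leq 1 + V(x) + L/(1-\lambda)$ (the stationary bound $E_\Pi V \leq L/(1-\lambda)$ following from $(A1)$), and since $\tilde{M}_m \geq \gamma_0^{-(1-a)m} B^{-am}$ on $\{N_m < am\}$, a Markov-type inequality gives $P(N_m < am) \leq (1 + L/(1-\lambda) + V(x))\,(\gamma_0^{1-a} B^{a})^m$. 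Adding the two bounds produces the asserted inequality, with $a \in (0,1)$ free.

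The hard part will be the correct handling of the coupling construction and the supermartingale verification on $C \times C$: there one must check that conditioning on non-coalescence does not inflate $E[W_{k+1} \mid \mathcal{F}_k]$, which works because all quantities are nonnegative, so the subprobability expression $\gamma\,E[W_{k+1} \mid \text{no coalescence}]$ is dominated by the full conditional expectation, which is $\leq B$. Verifying measurability, confirming that the residual kernel is a genuine subprobability kernel on $C$, and assembling the two supermartingale arguments so that the free parameter $a$ enters cleanly, are the places where care is required; the drift bounds themselves are routine once the bivariate chain is in place.
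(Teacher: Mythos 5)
The paper does not prove this proposition; it is quoted directly from \citet{rosenthal1995minorization}, and your proposal is essentially a correct reconstruction of Rosenthal's original coupling argument: the bivariate drift for $W_k = 1 + V(X_k) + V(X_k')$, the split on the number of visits $N_m$ to $C \times C$, and the two supermartingale bounds (giving the $\gamma^{am}$ term and the $\gamma_0^{(1-a)m}B^{am}$ term, with $\Pi V \leq L/(1-\lambda)$ controlling the initial value) are exactly the ingredients of that proof. Your treatment of the one genuine subtlety --- that the drift condition holds for $K$ but not for the residual kernel, resolved by nonnegativity so that the unconditional drift bound suffices --- is also the standard fix, so nothing is missing.
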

	\begin{remark}
		The function~$V$ is called a drift function, and~$C$
		is called a small set.  $(A1)$ and $(A2)$ are referred
		to as drift and minorization conditions,
		respectively.
	\end{remark}

	Proposition~\ref{pro:rosen} gives the following upper bound on
	$\rho_*^{\mbox{{\tiny TV}}}$:
	\begin{equation}
		\nonumber
		\hat{\rho}_{\mbox{{\tiny Ros}}} = \gamma^a \vee \Bigg\{ \left(
		\frac{1+2L+\lambda d}{1 + d} \right)^{1-a} \left[ 1 + 2(\lambda d +
		L) \right]^a \Bigg\} \;.
	\end{equation}
	Note that $(1+2L+\lambda d)/(1+d)< 1$, so there always exists
	an~$a$ such that $\hat{\rho}_{\mbox{{\tiny Ros}}} < 1$.
	In the remainder of this section, we demonstrate how this method can
	fail when the Markov chain is high dimensional.  The problem appears to
	be the nonexistence of a minorization condition with a small value of
	$\gamma$.
	
	We begin by stating a simple result
	concerning the size of the small set.  
	This result has apparently been
	known for many years, but, as far as we can tell, the first formal
	statement of it appears in \cite{jerison2016drift}.
	\begin{lemma} \label{lem:smallset} \citep[][Proposition 2.16]{jerison2016drift}
		Let~$C$ be as in Proposition~\ref{pro:rosen}. Then $\Pi(C)
		\geq 1/2$.
	\end{lemma}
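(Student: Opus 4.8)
The plan is to integrate the drift condition $(A1)$ against the stationary distribution~$\Pi$ and then apply Markov's inequality to the complement of~$C$. First I would exploit stationarity: since $\Pi$ is invariant, $\int_\X K(x,\cdot)\,\Pi(\df x) = \Pi(\cdot)$, so integrating $(A1)$ over~$x$ with respect to~$\Pi$ and swapping the order of integration (legitimate by Tonelli's theorem, as $V \geq 0$) gives
\[
\int_\X V(x')\,\Pi(\df x') = \int_\X \! \int_\X V(x')\, K(x,\df x')\,\Pi(\df x) \leq \lambda \int_\X V(x)\,\Pi(\df x) + L \;.
\]
Once the integral $\int_\X V\,\df\Pi$ is known to be finite, it can be transferred to the left-hand side, yielding $(1-\lambda)\int_\X V\,\df\Pi \leq L$, i.e.\ $\int_\X V\,\df\Pi \leq L/(1-\lambda)$.

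With this tail control in hand, the conclusion is immediate. Since $C = \{x' \in \X : V(x') \leq d\}$, its complement is $\{V > d\}$, and Markov's inequality gives
\[
\Pi(C^c) = \Pi(V > d) \leq \frac{1}{d}\int_\X V\,\df\Pi \leq \frac{L}{(1-\lambda)d} \;.
\]
The hypothesis $d > 2L/(1-\lambda)$ forces $(1-\lambda)d > 2L$, whence $\Pi(C^c) < L/(2L) = 1/2$, and therefore $\Pi(C) = 1 - \Pi(C^c) > 1/2$, which establishes the claim.

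The one step requiring genuine care — and the main obstacle — is justifying that $\int_\X V\,\df\Pi < \infty$, since the first display is vacuous when this integral diverges (the inequality $\infty \leq \lambda\cdot\infty + L$ carries no information). I would resolve this by iterating $(A1)$ to obtain $\int_\X V(x')\,K^m(x,\df x') \leq \lambda^m V(x) + L/(1-\lambda)$, dividing the associated Cesàro averages by~$m$, and passing to the limit using the ergodic theorem for the positive Harris recurrent chain together with a truncation ($V \wedge N$) and monotone convergence; this simultaneously yields finiteness and the bound $\int_\X V\,\df\Pi \leq L/(1-\lambda)$. Alternatively, one may simply invoke the standard comparison theorem for geometric drift conditions \citep{meyn2012markov}, which guarantees $\int_\X V\,\df\Pi \leq L/(1-\lambda)$ directly. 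Either route closes the argument.
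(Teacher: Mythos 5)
Your proposal is correct and follows essentially the same route as the paper: both establish $\Pi V \leq L/(1-\lambda)$ from $(A1)$ via stationarity together with a cut-off/truncation argument to handle the possibility that $\Pi V$ is infinite (the paper cites Proposition 4.24 of Hairer's notes for exactly this), and then bound $\Pi(\X \setminus C)$ using $V \geq d\,1_{\X \setminus C}$, which is your Markov-inequality step. The only cosmetic difference is that you phrase the final step as Markov's inequality while the paper integrates the pointwise bound directly; these are identical.
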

	\begin{proof}
		Let~$V$,~$\lambda$,~$b$, and~$d$ be as in the said
		proposition.  A cut-off argument \cite[see, e.g.,][Proposition
		4.24]{hairer2006ergodic} shows that $(A1)$ implies $\Pi V
		\leq L/(1-\lambda)$. (This inequality is trivial to verify if
		it is known that $\Pi V < \infty$.)  On the other hand, since
		$V(x) \geq d 1_{\X \setminus C}(x)$,
		\[
		\Pi V \geq d(1-\Pi(C)) \geq \frac{2L(1-\Pi(C))}{1-\lambda}\;.
		\]
		The result is then immediate.
	\end{proof}
	
	As dimension increases,~$\Pi$ tends to ``spread out," and the
	requirement that $\Pi(C) \geq 1/2$ can force~$C$ to be a large subset of~$\X$.  
	When this is the case,~$\gamma$ in (A2) is likely to be very close
	to~$1$, and this in turn leads to an upper bound on
	$\rho_*^{\mbox{{\tiny TV}}}$ that is very close to~$1$.  We now
	illustrate this phenomenon using a family of simple autoregressive
	processes.
	
	Let $\X = \mathbb{R}^p$, and let
	$K(x,\cdot)$ be the probability measure associated with the
	$\mbox{N}(x/2, 3I_p/4)$ distribution.  This Mtf defines a simple,
	well-behaved autoregressive process.  It is Harris ergodic, its invariant
	distribution~$\Pi$ is $\mbox{N}(0, I_p)$, and it is known that the
	convergence rate is $\rho_*^{\mbox{{\tiny TV}}} = 1/2$ for all $p$.
	Now consider using Proposition~\ref{pro:rosen} to construct an upper
	bound on $\rho_*^{\mbox{{\tiny TV}}}$.  In particular, for each~$p$,
	we choose a drift function $V: \mathbb{R}^p \to [0,\infty)$ and an
	associated small set~$C$, which together yield an upper bound,
	$\hat{\rho}_{\mbox{{\tiny Ros}}}$.  The following result shows
	that the sequence of bounds constructed using
	Proposition~\ref{pro:rosen} is necessarily quite badly behaved.
	
	\begin{proposition}
		\label{pro:gaussian-rosen}
		Let $K(x,\cdot), \, x \in \mathbb{R}^p,$ be the probability measure associated
		with the $\mbox{N}(x/2, 3I_p/4)$ distribution.  For
		any sequence of d\&m conditions,
		$\hat{\rho}_{\mbox{{\tiny Ros}}} \to 1$ at an
		exponential rate as $p \to \infty$.
	\end{proposition}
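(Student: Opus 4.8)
The plan is to show that the bound is already ruined through its dependence on the minorization constant~$\gamma$, by proving that \emph{any} admissible minorization forces $\gamma \to 1$ exponentially fast in~$p$. The reduction is elementary: since $a \in (0,1)$ and $\gamma \in (0,1)$ we have $\gamma^a \ge \gamma$, so
\begin{equation} \nonumber
\hat{\rho}_{\mbox{{\tiny Ros}}} \;\ge\; \gamma^a \;\ge\; \gamma
\end{equation}
\emph{regardless} of the value of~$a$. Hence it suffices to produce a lower bound on~$\gamma$, valid for every admissible choice of d\&m ingredients, that approaches~$1$ exponentially.

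To lower bound~$\gamma$ I would exploit condition $(A2)$ through a pairwise overlap argument. If $K(x,\cdot) \ge (1-\gamma)\nu(\cdot)$ for every $x \in C$, then for any two points $x_1, x_2 \in C$ the measure $(1-\gamma)\nu$ is dominated by both $K_{x_1}$ and $K_{x_2}$, hence by their minimum $K_{x_1} \wedge K_{x_2}$. Taking total mass and using $\|K_{x_1} \wedge K_{x_2}\| = 1 - \|K_{x_1} - K_{x_2}\|_{\mathrm{TV}}$ gives
\begin{equation} \nonumber
\gamma \;\ge\; \sup_{x_1, x_2 \in C} \|K_{x_1} - K_{x_2}\|_{\mathrm{TV}} \;.
\end{equation}
Since $K_{x_i}$ is the $\mathrm{N}(x_i/2, 3I_p/4)$ law, the two measures share a common covariance, and the standard formula for the total variation distance between such Gaussians yields $\|K_{x_1} - K_{x_2}\|_{\mathrm{TV}} = 2\Phi\big(\|x_1 - x_2\|/(2\sqrt{3})\big) - 1$, where~$\Phi$ is the standard normal distribution function. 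Consequently $1 - \gamma \le 2\big(1 - \Phi(\|x_1-x_2\|/(2\sqrt{3}))\big)$, which decays like $\exp(-\|x_1-x_2\|^2/24)$ as $\|x_1-x_2\| \to \infty$; so everything hinges on finding two points of~$C$ whose separation grows with~$p$.

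This is where Lemma~\ref{lem:smallset} enters: we know $\Pi(C) \ge 1/2$ with $\Pi = \mathrm{N}(0, I_p)$. Writing $\delta$ for the diameter of~$C$ and fixing any $x_0 \in C$, we have $C \subseteq \{x : \|x - x_0\| \le \delta\}$, so by Anderson's inequality (the Gaussian measure of a Euclidean ball is largest when the ball is centered at the mean), $\Pi(C) \le P(\|Z\| \le \delta)$ for $Z \sim \mathrm{N}(0, I_p)$. Combining with $\Pi(C) \ge 1/2$ forces~$\delta$ to be at least the median of $\|Z\|$; since $z \mapsto \|z\|$ is $1$-Lipschitz, Gaussian concentration shows $\|Z\|$ is tightly clustered around $\mathbb{E}\|Z\| = \sqrt{p}\,(1+o(1))$, so this median is $\sqrt{p} - O(1)$ and hence $\delta \ge \sqrt{p} - O(1)$.

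Putting the pieces together, for each~$p$ one can select $x_1, x_2 \in C$ with $\|x_1 - x_2\| \ge \sqrt{p} - O(1)$, and then
\begin{equation} \nonumber
\hat{\rho}_{\mbox{{\tiny Ros}}} \;\ge\; \gamma \;\ge\; 1 - 2\Big(1 - \Phi\big(\tfrac{\sqrt{p} - O(1)}{2\sqrt{3}}\big)\Big) \;=\; 1 - \exp\big(-p/24 + o(p)\big)\;,
\end{equation}
uniformly over all admissible d\&m conditions and all $a \in (0,1)$, which is exactly the claimed exponential approach to~$1$. The main obstacle in executing this plan is the geometric step of the previous paragraph --- quantifying that $\Pi(C) \ge 1/2$ compels~$C$ to have diameter of order~$\sqrt{p}$ --- which requires the maximality of centered Gaussian ball probabilities (Anderson's inequality) together with concentration of the chi distribution; the remaining steps are short computations.
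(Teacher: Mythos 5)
Your proposal is correct and takes essentially the same route as the paper's proof: both use Lemma~\ref{lem:smallset} to force the diameter of $C$ to be of order $\sqrt{p}$, both lower-bound $\gamma$ by the total variation distance between $K_{x_1}$ and $K_{x_2}$ for well-separated $x_1, x_2 \in C$ (this is exactly the paper's Lemma~\ref{lemma:dist}, which you re-derive via the minimum-measure identity), and both finish with the Gaussian total variation formula. The only differences are in the details: you make the paper's ``easy to show'' diameter step rigorous via Anderson's inequality (obtaining diameter at least $\sqrt{m_p}$ rather than the paper's $2\sqrt{m_p}$, which only changes the constant in the exponential rate), and you replace the paper's citation of Q\&H's Proposition~2 with the elementary observation $\hat{\rho}_{\mbox{{\tiny Ros}}} \geq \gamma^a \geq \gamma$.
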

	
	Before proving Proposition~\ref{pro:gaussian-rosen}, we state a
	general result concerning condition $(A2)$.  The proof is left to the
	reader.
	
	\begin{lemma}
		\label{lemma:dist}
		Suppose that $K(x,\cdot), \; x \in \X \,,$ admits a density
		function $k(x,\cdot)$ with respect to some reference
		measure~$\mu$. If $(A2)$ holds, then for all $x, y \in C$,
		\[
		\frac{1}{2} \int_{\X} |k(x,x') - k(y,x')| \, \mu(\df x') \leq
		\gamma \;.
		\]
	\end{lemma}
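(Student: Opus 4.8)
The plan is to recognize the left-hand side as the total variation distance between the one-step laws $K(x,\cdot)$ and $K(y,\cdot)$, and then to exploit the standard overlap identity together with the common lower bound supplied by the minorization condition. Concretely, when two probability measures possess densities $f$ and $g$ with respect to a common reference measure $\mu$, one has $\frac{1}{2}\int_{\X}|f - g|\,\df\mu = 1 - \int_{\X}\min(f,g)\,\df\mu$, which follows at once from $|f-g| = f + g - 2\min(f,g)$ after integrating. Thus it suffices to show that $\int_{\X}\min\bigl(k(x,\cdot),k(y,\cdot)\bigr)\,\df\mu \geq 1 - \gamma$ whenever $x,y \in C$.

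The first step is to pass from the measure-level minorization in $(A2)$ to a pointwise density bound. Since $K(x,\cdot)$ admits the density $k(x,\cdot)$ with respect to $\mu$, any $\mu$-null set $A$ satisfies $(1-\gamma)\nu(A) \leq K(x,A) = 0$, so $\nu \ll \mu$ and $\nu$ possesses a density $n$ with respect to $\mu$. The inequality $K(x,\cdot) \geq (1-\gamma)\nu(\cdot)$ then translates into the $\mu$-almost-everywhere bound $k(x,\cdot) \geq (1-\gamma)n(\cdot)$, valid for each fixed $x \in C$.

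The second step combines these bounds. For $x,y \in C$ we have both $k(x,x') \geq (1-\gamma)n(x')$ and $k(y,x') \geq (1-\gamma)n(x')$ for $\mu$-almost every $x'$, hence $\min\bigl(k(x,x'),k(y,x')\bigr) \geq (1-\gamma)n(x')$. Integrating against $\mu$ and using that $n$ integrates to $1$ yields $\int_{\X}\min\bigl(k(x,\cdot),k(y,\cdot)\bigr)\,\df\mu \geq 1-\gamma$, and the overlap identity then gives $\frac{1}{2}\int_{\X}|k(x,\cdot)-k(y,\cdot)|\,\df\mu \leq \gamma$, as required.

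There is no serious obstacle here; the only point meriting a little care is the justification that $\nu$ is absolutely continuous with respect to $\mu$, so that the measure-level minorization may be read off at the level of densities. This is why the result is elementary enough to leave to the reader: once the common density lower bound $(1-\gamma)n$ is in hand, the conclusion is a one-line application of the overlap identity.
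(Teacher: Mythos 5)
The paper gives no argument for this lemma (``the proof is left to the reader''), and your proof is correct and is precisely the standard argument intended: the overlap identity $\tfrac{1}{2}\int_{\X}|f-g|\,\df\mu = 1-\int_{\X}\min(f,g)\,\df\mu$, combined with the observation that $(A2)$ (with $\gamma<1$) forces $\nu \ll \mu$, so that both $k(x,\cdot)$ and $k(y,\cdot)$ admit the common $\mu$-a.e.\ lower bound $(1-\gamma)\,\frac{\df\nu}{\df\mu}$, whence $\int_{\X}\min\bigl(k(x,\cdot),k(y,\cdot)\bigr)\,\df\mu \geq 1-\gamma$. The only implicit technicalities—$\sigma$-finiteness of the reference measure $\mu$ so that Radon--Nikodym applies, and the vacuity of the claim when $C=\emptyset$—are harmless and standard.
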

	
	\begin{proof}[Proof of Proposition~\ref{pro:gaussian-rosen}]
		Let~$C$ be as in Proposition~\ref{pro:rosen}.  It is easy to
		show that, in order for $\Pi(C) \geq 1/2$, the diameter of~$C$
		must be at least $2 \sqrt{m_p}$, where $m_p$ is the median of
		a $\chi^2_p$ distribution.  Hence, letting $k(x,\cdot), x \in
		\mathbb{R}^p$, be the density associated with $K$, we have
		\[
		\begin{aligned}
			&\sup_{x,y \in C} \int_{\X} | k(x,x') - k(y,x') | \, \df x' \\
			&\geq \sqrt{\frac{2}{3\pi}} \int_{\mathbb{R}} \left|\exp
			\left[-\frac{2}{3}\left(x - \frac{\sqrt{m_p}}{2} \right)^2
			\right] - \exp \left[-\frac{2}{3}\left(x +
			\frac{\sqrt{m_p}}{2} \right)^2 \right] \right| \df x \\ &= 2
			- 4\Phi\left( -\sqrt{\frac{m_p}{3}} \right) ,
		\end{aligned}
		\]
		with~$\Phi(\cdot)$ being the cumulative distribution function (cdf) for the standard Gaussian distribution.
		Now, $m_p$ is of order~$p$ and $\Phi(-\sqrt{m_p/3})$ goes
		to~$0$ exponentially fast as $m_p \to \infty$.  Hence, it
		follows from Lemma~\ref{lemma:dist} that $\gamma \to 1$ at an
		exponential rate as $p \to \infty$, which in turn implies \citep[see, e.g.,][Proposition~2]{qin2019convergence} that
		$\hat{\rho}_{\mbox{{\tiny Ros}}} \to 1$ at an
		exponential rate.
	\end{proof}
	
	We conclude that, no matter how hard we work to find good d\&m
	conditions, Proposition~\ref{pro:rosen} cannot possibly yield
	a reasonable asymptotic bound on the convergence rate for the
	simple autoregressive process as $p \rightarrow \infty$.
	Moreover, it seems unlikely that the situation would be any
	better for more complex Markov chains, like those used in
	MCMC.  
	The reader may wonder whether the problems described
	above extend to other d\&m-based bounds.  The answer is
	``yes.'' 
	Indeed, Proposition~\ref{pro:gaussian-rosen} continues to hold if we replace $\hat{\rho}_{\mbox{\tiny Ros}}$
	with the corresponding bound from \cite{hairer2011yet} (and the
	proof is essentially the same).
	Subsequent to our work, \cite{qin2020limitations} showed that the bounds developed in \cite{roberts1999bounds} and \cite{baxendale2005renewal} behave similarly in our toy example. 
	Using a general framework for establishing negative results regarding d\&m-based bounds, \cite{qin2020limitations} also showed that similar problems occur for more complex chains such as those associated with a Metropolis-adjusted Langevin algorithm.
	
	Intuitively, a good d\&m-based bound requires a minorization inequality
	with a small set $C$ that is large enough to be visited frequently by
	the chain, but simultaneously small enough that~$\gamma$ is not too
	close to~1.  However, at least for our toy example,~$\gamma$ is highly
	susceptible to the growing size of~$C$.  As a result, in high
	dimensional settings where~$\Pi$ tends to spread out, the ``Goldilocks"
	small set doesn't exist.
	
	It should be
	mentioned that, if one is able to establish a sharp minorization inequality for a multi-step Mtf $K^j$
	rather than $K$ itself, where~$j$ is a sufficiently large
	integer, then it's possible to avoid the problems described
	above.  
	In fact, in \pcite{rosenthal1995minorization} original paper, the result in Proposition~\ref{pro:rosen} is also stated in terms of multi-step minorization.
	To be specific, suppose that one can establish $(A2)$ for $K^j, \, j \geq 2,$ instead of~$K$, and that $(A1)$ holds for~$K$.
	Then \pcite{rosenthal1995minorization} Theorem~5 yields a multi-step version of $\hat{\rho}_{\tiny \mbox{Ros}}$ whose first term is $\gamma^{a/j}$.
	For a given small set~$C$, the best (smallest) possible~$\gamma$ in a multi-step minorization typically decreases as the number steps,~$j$, increases.
	If~$\gamma$ decreases at a fast enough rate so that $\gamma^{1/j}$ is also decreasing, one can gain from utilizing multi-step minorization.
	Because of the $j$th root, in order for multi-step minorization to work well, one needs to find a~$\gamma$ that's close to~$0$ if~$j$ is large.
	Naturally, this is feasible only if one has sufficient knowledge on the precise form of~$K^j$.
	In the autoregressive chain example, one can derive closed forms for $K^j, \, j \geq 2$.
	This makes it possible to construct sharp convergence bounds based on multi-step minorization.
	See Section~\ref{app:multi} in the Appendix for a detailed derivation.
	However, in more practical examples where $K^j, \, j \geq 2,$ lacks a closed form, it becomes very difficult to establish sharp minorization inequalities for multi-step Mtfs.
	Thus, although many of the standard d\&m-based bounds can be constructed from multi-step minorization conditions, it is rare that one can directly derive benefit from doing so when studying Monte Carlo Markov chains whose Mtfs are usually quite complex.
	For such chains, one would need to employ more sophisticated techniques to extract the information that a multi-step Mtf contains.
	One of these techniques is one-shot coupling introduced in \cite{roberts2002one}, which leads to a result that allows one to construct TV bounds based on bounds in Wasserstein distances \citep{madras2010quantitative}.
	As we shall see, this type of technique is an important underlying
	element of our analysis as well.

	In what follows, we describe a class of bounds based on Wasserstein distances and random mappings.
	This type of bound circumvents the minorization problem in traditional d\&m, and appears to be substantially more robust to increasing dimensions than $\hat{\rho}_{\tiny\mbox{Ros}}$.
	In
	Section~\ref{SEC:AC}, the bounds are employed to show that the rate of convergence of \pcite{albert1993bayesian}
	Markov chain is bounded below~1 in three different asymptotic regimes where~$n$ and~$p$ both diverge.
	Moreover, in Section~\ref{SEC:RANDOMEFFECTS}, we derive similar results for a Gibbs chain for a Bayesian random effects model.
	
	\section{Wasserstein and Total Variation Bounds via Random Mappings}
	\label{sec:wass}
	
	We will consider two existing convergence bounds with respect to Wasserstein distances \citep{ollivier2009ricci,qin2019geometric}.
	As we will see, to apply these results, one must construct coupling kernels that exhibit a contractive behavior.
	This can be done by constructing and analyzing random functions, or iterative function systems \citep{steinsaltz1999locally,madras2010quantitative}, which will be defined shortly.
	Most of the results in this
	section are not new, but were scattered throughout the
	literature.
	Our main contribution in this section is to reformulate these
	results so that they can be more easily applied to Monte Carlo Markov
	chains with complex transition laws.
	In particular, Corollary~\ref{cor:classical} and Proposition~\ref{pro:general} are, respectively, random-mapping-based versions of \pcite{ollivier2009ricci} Corollary 21 and \pcite{qin2019geometric} Corollary 2.1 that work well on the chains studied in Sections~\ref{SEC:AC} and~\ref{SEC:RANDOMEFFECTS}.

	Let $(\X, \psi)$ be a Polish metric space and $\mathcal{B}$ the associated Borel $\sigma$-algebra.  
	For two probability measures on $(\X,
	\mathcal{B})$,~$\mu$ and~$\nu$, their Wasserstein distance induced
	by~$\psi$ is defined as
	\[
	\mbox{W}_{\psi} (\mu, \nu) = \inf_{\upsilon \in \Upsilon(\mu,\nu)}
	\int_{\X \times \X} \psi(x,y) \, \upsilon(\df x, \df y) \;,
	\]
	where $\Upsilon(\mu,\nu)$ is the set of all couplings of~$\mu$
	and~$\nu$, i.e., the set of all probability measures on the
	product space $(\X \times \X, \mathcal{B} \times \mathcal{B})$
	whose marginals are respectively~$\mu$ and~$\nu$.
	Our goal is to bound
	$\mbox{W}_{\psi}(K_x^m, \Pi)$ for $x \in \X$ and $m \geq 0$.
	The associated convergence rate, $\rho_*(\psi) \in [0,1]$, is
	the infimum of the $\rho$s that satisfy~\eqref{ine:geometric}
	when~$D$ is the Wasserstein distance $\mbox{W}_{\psi}$.
	
	
	A natural way of bounding the Wasserstein distance between
	$K_x^m$ and $K_y^m$ is to construct a pair of coupled Markov chains
	governed by a Markov transition function (Mtf) $\tilde{K}: (\X \times \X) \times (\mathcal{B}
	\times \mathcal{B}) \to [0,1]$ such that $\tilde{K}_{(x,y)} \in
	\Upsilon(K_{x}, K_{y})$ for all $x,y \in \X$.  Then, for any $m
	\geq 0$, $\tilde{K}_{(x,y)}^m \in \Upsilon(K_x^m, K_y^m)$, and it
	follows that
	\begin{equation} \nonumber
		\mbox{W}_{\psi}(K_x^m, K_y^m) \leq \int_{\X \times \X} \psi(x',y')
		\tilde{K}^m\left( (x,y), (\df x', \df y') \right) \;.
	\end{equation}
	We call $\tilde{K}$ a coupling kernel of~$K$.  Based on this
	construction, one can arrive at the following well-known result
	(see, e.g., \citeauthor{gibbs2004convergence}, \citeyear{gibbs2004convergence}, Lemma 2.1, and \citeauthor{ollivier2009ricci}, \citeyear{ollivier2009ricci}, Corollary 21.)
	
	\begin{proposition}
		\label{pro:classical}
		Suppose that $c(x) := \int_\X \psi(x,y) K(x, dy) <
		\infty$ for all $x \in \X$.  Suppose further that
		there exist a coupling kernel of~$K$, denoted
		by~$\tilde{K}$, and $\gamma < 1$ such that, for every
		$x,y \in \X \,$,
		\begin{equation}
			\label{ine:psimono}
			\int_{\X \times \X} \psi(x',y') \tilde{K}\left((x,y),
			(\df x', \df y') \right) \leq \gamma \psi(x,y) \;.
		\end{equation}
		Then for each $x \in \X$ and $m \geq 0 \,$,
		\[
		\mbox{W}_{\psi}(K_x^m, \Pi) \leq \frac{c(x)}{1-\gamma}
		\gamma^m \;.
		\]
	\end{proposition}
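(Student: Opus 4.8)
The plan is to establish a geometric contraction at the level of the coupling kernel, telescope it along a single trajectory of the chain, and then pass to the stationary limit. First I would recast~\eqref{ine:psimono} in operator form: for nonnegative measurable $f$ on $\X \times \X$, set $(\tilde{K}f)(x,y) = \int_{\X \times \X} f(x',y')\,\tilde{K}((x,y),(\df x',\df y'))$, so the hypothesis reads $\tilde{K}\psi \leq \gamma\,\psi$ pointwise. Since $f \mapsto \tilde{K}f$ is linear and monotone, and since $\tilde{K}^{m+1}\psi = \tilde{K}\big(\tilde{K}^m\psi\big)$ by Tonelli (all integrands being nonnegative), a one-line induction gives the $m$-step contraction
\[
\int_{\X \times \X} \psi(x',y')\,\tilde{K}^m\big((x,y),(\df x',\df y')\big) \leq \gamma^m\,\psi(x,y), \qquad m \geq 0 .
\]

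Next I would convert this into a bound between consecutive iterates of the chain started at $x$, using the fact noted before the proposition that $\tilde{K}^j_{(x,y)} \in \Upsilon(K_x^j, K_y^j)$. Launching the coupled chain from $(x,Y)$ with $Y \sim K_x = K(x,\cdot)$, the law of the coupled pair after $j$ steps is $\Gamma_j := \int_\X \tilde{K}^j\big((x,y),\cdot\big)\,K_x(\df y)$. Its first marginal is $\int_\X K_x^j\,K_x(\df y) = K_x^j$, and its second marginal is $\int_\X K_y^j\,K_x(\df y) = K_x^{j+1}$ by the semigroup identity $K^{j+1}(x,\cdot) = \int_\X K^j(y,\cdot)\,K(x,\df y)$; hence $\Gamma_j \in \Upsilon(K_x^j, K_x^{j+1})$. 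Integrating the $m$-step contraction against $K_x(\df y)$ then yields
\[
\mbox{W}_{\psi}\big(K_x^j, K_x^{j+1}\big) \leq \int_\X \left[ \int_{\X \times \X} \psi(x',y')\,\tilde{K}^j\big((x,y),(\df x',\df y')\big) \right] K_x(\df y) \leq \gamma^j \int_\X \psi(x,y)\,K_x(\df y) = \gamma^j\,c(x) .
\]

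Finally I would telescope and pass to the limit. For $J > m$, the triangle inequality for $\mbox{W}_{\psi}$ gives $\mbox{W}_{\psi}(K_x^m, K_x^J) \leq \sum_{j=m}^{J-1} \mbox{W}_{\psi}(K_x^j, K_x^{j+1}) \leq \gamma^m c(x)/(1-\gamma)$, uniformly in $J$. Since the chain is Harris ergodic, $K_x^J \to \Pi$ in total variation and hence weakly, and because $\mbox{W}_{\psi}$ is lower semicontinuous under weak convergence, letting $J \to \infty$ gives $\mbox{W}_{\psi}(K_x^m, \Pi) \leq \liminf_{J} \mbox{W}_{\psi}(K_x^m, K_x^J) \leq \gamma^m c(x)/(1-\gamma)$, as claimed. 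I expect this last step to be the main obstacle: a priori $\Pi$ need not have a finite $\psi$-moment, so one must first ensure $\mbox{W}_{\psi}(K_x^m, \Pi)$ is well defined and finite. The uniform Cauchy estimate shows $(K_x^J)_J$ converges in the complete Wasserstein space to a limit of finite $\psi$-moment, which weak convergence identifies with $\Pi$; I would spell out this completeness/lower-semicontinuity reconciliation explicitly rather than treat $K_x^J \to \Pi$ in $\mbox{W}_{\psi}$ as automatic.
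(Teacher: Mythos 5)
Your proof is correct, and it is essentially the standard argument behind the result the paper invokes without proof (it cites \citet{ollivier2009ricci}, Corollary~21): the coupling kernel gives the $m$-step contraction, consecutive iterates satisfy $\mbox{W}_{\psi}(K_x^j,K_x^{j+1})\leq \gamma^j c(x)$, and telescoping plus identification of the limit with~$\Pi$ (via Harris ergodicity and lower semicontinuity of $\mbox{W}_{\psi}$ under weak convergence) yields the bound. Your closing care about finiteness of the $\psi$-moment of $\Pi$ is a genuine subtlety of this route and is handled appropriately.
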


	For practically relevant Monte Carlo Markov chains with complicated Mtfs, it's often not a simple task to construct coupling kernels that allow for the application of Proposition~\ref{pro:classical} or other similar results.
	We now introduce the notion of random mapping, which is a well-known tool for inducing coupling kernels.
	As we demonstrate in Sections~\ref{SEC:AC} and~\ref{SEC:RANDOMEFFECTS}, certain classes of Gibbs chains can be particularly amenable to this type of construction.
	
	Let $(\Omega, \mathcal{F}, \mathbb{P})$ be a probability
	space.  Let $\theta: \Omega \to \Theta$ be a random element
	that assumes values in some measurable space~$\Theta$, and let
	$\tilde{f}: \X \times \Theta \to \X$ be a Borel measurable function.
	Set $f(x) = \tilde{f}(x,\theta)$ for $x \in \X$.  Then~$f$ is
	called a random mapping on~$\X$.  If $f(x) \sim K_x$ for all
	$x \in \X$, then~$f$ is said to induce $K(\cdot,\cdot)$.
	Assume that this is the case, and let $\{f_m\}_{m=1}^{\infty}$
	be iid copies of~$f$.  Denote $f_m \circ f_{m-1} \circ \cdots
	\circ f_1$ by $F_m$, and define $F_0: \X \rightarrow \X$ to be
	the identity function.  Then, for every $x,y \in \X$,
	$\{(F_m(x),F_m(y))\}_{m=0}^{\infty}$ is a time-homogeneous
	Markov chain such that the joint distribution of $F_m(x)$ and
	$F_m(y)$ lies in $\Upsilon(K_x^m,K_y^m)$ for $m \ge 0$.
	Obviously, the distribution of $(f(x),f(y))$ defines a coupling kernel $\tilde{K}((x,y),\cdot)$.
	The left-hand-side of~\eqref{ine:psimono} can then be replaced by $\mathbb{E}\psi(f(x),f(y))$.
	
	As an example, consider again the autoregressive process from
	Section~\ref{sec:liability}.  For $x \in \mathbb{R}^p$, let $f(x) =
	x/2 + \sqrt{3/4} \, N$, where $N \sim \mbox{N}(0, I_p)$.  Then $f(x)
	\sim \mbox{N}(x/2, 3I_p/4)$; so $f$ induces $K$.  Furthermore, letting $\|\cdot\|_2$ be the Euclidean norm, we have, for any
	$x, y \in \mathbb{R}^p$,
	\[
	\mathbb{E} \| f(x) - f(y) \|_2 = \mathbb{E} \bigg \|
	\frac{x}{2} + \sqrt{\frac{3}{4}} N - \frac{y}{2} - \sqrt{\frac{3}{4}}N \bigg \|_2 = \frac{1}{2} \|x
	- y \|_2 \,.
	\]
	Thus, taking $\psi$ to be Euclidean distance in
	Proposition~\ref{pro:classical}, we have $\rho_*(\psi) \le
	1/2$ for all $p$.

	For a random mapping~$f$, bounding the expectation of $\psi(f(x),f(y))$ from above can be difficult in practice.
	We now use ideas from
	\citet{steinsaltz1999locally} to show that, if $\X$ is a
	Euclidean space, then this can be done by
	regulating the local behavior of~$f$.  Assume for now that~$\X$ is a
	convex subset of a Euclidean space, and that $\psi(x,y) =
	\|x-y\|, \; x, y \in \X$, where $\|\cdot\|$ is a norm (not
	necessarily $L^2$). 
	We say that~$f$ is differentiable on~$\X$ if, with probability~$1$, for each $x,y \in \X$,
	$\frac{\df}{\df t} f(x+t(y-x))$, as a function of $t \in [0,1]$, exists and is integrable.
	The following result holds for a differentiable~$f$.
	\begin{lemma} \label{lem:diffcontract}
		Suppose that $\X$ is a convex subset of a Euclidean space equipped with a norm $\|\cdot\|$.
		Let~$f$ be a differentiable random mapping on~$\X$.
		Then for $x,y \in \X$,
		\begin{equation} \label{ine:diffcontract}
			\mathbb{E} \|f(x)-f(y)\| \leq \sup_{t \in [0,1]} \mathbb{E} \left\| \frac{\df}{\df t} f(x+t(y-x)) \right\| \,.
		\end{equation}
	\end{lemma}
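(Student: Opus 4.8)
The plan is to reduce the claim to the fundamental theorem of calculus applied along the line segment joining $x$ and $y$, followed by an integral form of the triangle inequality and Tonelli's theorem. Since $\X$ is convex, the path $t \mapsto x + t(y-x)$ stays in $\X$ for all $t \in [0,1]$, so $f(x+t(y-x))$ is well defined throughout. First I would fix $x,y \in \X$ and set $g(t) = f(x+t(y-x))$, a random curve in the Euclidean space with $g(0) = f(x)$ and $g(1) = f(y)$. By the assumed differentiability of $f$, with probability one the derivative $g'(t) = \df f(x+t(y-x))/\df t$ exists and is integrable on $[0,1]$, so the fundamental theorem of calculus yields $f(y) - f(x) = \int_0^1 g'(t)\,\df t$ almost surely.

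Next I would bound the norm of this integral by the integral of the norm, i.e. $\| \int_0^1 g'(t)\,\df t \| \le \int_0^1 \| g'(t) \|\,\df t$. Because $\|\cdot\|$ is allowed to be an arbitrary norm rather than the Euclidean one, I would justify this through the dual characterization $\| v \| = \sup_{\| \phi \|_* \le 1} \phi(v)$: for any linear functional $\phi$ with $\| \phi \|_* \le 1$, linearity of the integral gives $\phi(\int_0^1 g'\,\df t) = \int_0^1 \phi(g')\,\df t \le \int_0^1 \| g' \|\,\df t$, and taking the supremum over such $\phi$ delivers the inequality. (Equivalently, this is Jensen's inequality for the convex map $\|\cdot\|$.) Combining with the previous display gives $\| f(y)-f(x) \| \le \int_0^1 \| g'(t) \|\,\df t$ almost surely.

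Finally I would take expectations on both sides. Since the integrand $\| g'(t) \|$ is nonnegative and jointly measurable in $(t,\omega)$, Tonelli's theorem permits exchanging $\mathbb{E}$ and $\int_0^1$, so that $\mathbb{E}\| f(y)-f(x) \| \le \int_0^1 \mathbb{E}\| g'(t) \|\,\df t \le \sup_{t \in [0,1]} \mathbb{E}\| g'(t) \|$, which is precisely~\eqref{ine:diffcontract}. The only genuine subtleties are the measurability bookkeeping required to invoke Tonelli and the integral triangle inequality for a non-Euclidean norm; both are routine once the dual-norm argument above is in place, so I expect no substantive obstacle, the differentiability hypothesis having already supplied the almost-sure integral representation that drives the estimate.
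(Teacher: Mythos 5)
Your proof is correct and follows essentially the same route as the paper's: write $f(y)-f(x)=\int_0^1 \df f(x+t(y-x))/\df t \, \df t$, bound the norm of the integral by the integral of the norm, then exchange expectation and integral and pass to the supremum. The only cosmetic difference is that you derive the vector-valued integral triangle inequality from the dual characterization of the norm, whereas the paper simply cites it \citep[Proposition A.2.1]{prevot2007concise}.
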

	\begin{proof}
		First of all, it's not difficult to show that, given $(x,y) \in \X \times \X$, $\frac{\df}{\df t} f(x+t(y-x))$
		defines a measurable function on $[0,1] \times \Theta$.
		Therefore, the right-hand-side of~\eqref{ine:diffcontract} is well-defined.
		
		We know that
		\[
		\mathbb{E} \|f(x)-f(y)\| = \mathbb{E} \left\| \int_0^1 \frac{\df}{\df t} f(x+t(y-x)) \, \df t \right\| \,.
		\]
		By a well-known inequality concerning integrals of vector-valued functions \citep[see, e.g.,][Proposition A.2.1]{prevot2007concise}, the right-hand-side is bounded above by
		\[
		\mathbb{E}  \int_0^1 \left\| \frac{\df}{\df t} f(x+t(y-x)) \right\| \, \df t = \int_0^1 \mathbb{E} \left\| \frac{\df}{\df t} f(x+t(y-x)) \right\| \, \df t \,.
		\]
		The result follows immediately.
	\end{proof}

	\begin{remark}
		For simplicity, we focus on the case that~$f$ is differentiable.
		This is sufficient for a large class of Monte Carlo Markov chains.
		When differentiability is absent, it's still possible to establish a contraction condition based on the local behavior of~$f$ \citep[see, e.g.,][]{steinsaltz1999locally}.
	\end{remark}
	
	Using Lemma~\ref{lem:diffcontract}, one can rewrite Proposition~\ref{pro:classical} as follows.

	\begin{corollary} \label{cor:classical}
		Suppose that~$\X$ is a convex subset of a Euclidean space, and that~$\psi$ is induced by a norm~$\|\cdot\|$.
		Assume that $c(x):= \int_{\X} \psi(x,y) K(x, \df y) < \infty$ for each $x \in \X$.
		Suppose further that~$K$ is induced by a differentiable random mapping~$f$, and that there exists $\gamma < 1$ such that, for each $x,y \in \X$,
		\begin{equation} \label{ine:globalcontract}
			\sup_{t \in [0,1]} \mathbb{E} \left\| \frac{\df}{\df t} f(x+t(y-x)) \right\| \leq \gamma \|x-y\| \,.
		\end{equation}
		Then for each $x \in \X$ and $m \geq 0$,
		\[
		\mbox{W}_{\psi}(K_x^m, \Pi) \leq \frac{c(x)}{1-\gamma} \gamma^m \,.
		\]
	\end{corollary}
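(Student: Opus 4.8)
The plan is to verify that the hypotheses of Proposition~\ref{pro:classical} hold for the coupling kernel induced by~$f$, and then simply invoke that proposition. Recall from the preceding discussion of random mappings that the joint law of $(f(x),f(y))$ defines a coupling kernel $\tilde{K}((x,y),\cdot) \in \Upsilon(K_x,K_y)$, and that with this choice the left-hand side of~\eqref{ine:psimono} equals $\mathbb{E}\,\psi(f(x),f(y))$. Since $\psi(x,y)=\|x-y\|$, this is precisely $\mathbb{E}\|f(x)-f(y)\|$.

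First I would apply Lemma~\ref{lem:diffcontract}, which is available because~$f$ is differentiable and~$\X$ is a convex subset of a Euclidean space equipped with the norm~$\|\cdot\|$, to obtain
\[
\mathbb{E}\|f(x)-f(y)\| \leq \sup_{t\in[0,1]} \mathbb{E}\left\|\frac{\df f(x+t(y-x))}{\df t}\right\| \,.
\]
By the standing hypothesis~\eqref{ine:globalcontract}, the right-hand side is bounded by $\gamma\|x-y\|=\gamma\,\psi(x,y)$. Chaining these two bounds shows that
\[
\int_{\X\times\X}\psi(x',y')\,\tilde{K}\big((x,y),(\df x',\df y')\big) = \mathbb{E}\|f(x)-f(y)\| \leq \gamma\,\psi(x,y)
\]
for every $x,y\in\X$, which is exactly the contraction condition~\eqref{ine:psimono} with this coupling kernel and this~$\gamma$.

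Finally, the assumption $c(x)=\int_\X \psi(x,y)\,K(x,\df y)<\infty$ supplies the remaining hypothesis of Proposition~\ref{pro:classical}, so applying that proposition yields the stated bound $\mbox{W}_\psi(K_x^m,\Pi)\leq \frac{c(x)}{1-\gamma}\gamma^m$ for all $x\in\X$ and $m\geq 0$. There is essentially no real obstacle here: the corollary is a direct repackaging of Proposition~\ref{pro:classical}, with Lemma~\ref{lem:diffcontract} used only to replace the abstract contraction requirement~\eqref{ine:psimono} by the more checkable local condition~\eqref{ine:globalcontract}. The only point meriting a moment's care is confirming that the random-mapping construction genuinely produces a coupling kernel whose associated integral in~\eqref{ine:psimono} reduces to $\mathbb{E}\|f(x)-f(y)\|$, but this identification was already established in the paragraphs introducing random mappings, so the argument is immediate.
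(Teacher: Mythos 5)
Your proof is correct and follows exactly the route the paper intends: the paper presents this corollary as an immediate consequence of combining Lemma~\ref{lem:diffcontract} with Proposition~\ref{pro:classical} via the coupling kernel induced by the random mapping, which is precisely your argument. Nothing further is needed.
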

	
	In practice, it is often impossible to find
	a random mapping that yields~\eqref{ine:globalcontract} for all $x,y \in \X$.
	However, if the underlying Markov chain
	satisfies additional conditions, then~\eqref{ine:globalcontract} need not
	hold on {\it all} of $\X \times \X$.  
	Indeed, it can be shown that, if the chain satisfies a
	drift condition, then it is enough that~\eqref{ine:globalcontract} holds on
	the subset of $\X \times \X$ where the (joint) drift function
	takes small values 
	\citep[see, e.g.,][]{jarner2001locally,
		butkovsky2014subgeometric,durmus2015quantitative,douc2018markov,qin2019geometric}. 
	The following result is obtained by applying Lemma~\ref{lem:diffcontract} to Corollary 2.1 in \cite{qin2019geometric}, which is an extension of \pcite{douc2018markov} Theorem 20.4.5.

	\begin{proposition} \label{pro:general}
		Suppose that~$\X$ is a convex subset of a Euclidean space, and that~$\psi$ is induced by a norm~$\|\cdot\|$.
		Suppose further that each of the following conditions hold.
		\begin{enumerate}
			\item [$(A1')$] There exist $c \in (0,\infty)$, $\lambda < 1$, $L < \infty$, and a function $V: \X \to [0,\infty)$ such that
			\[
			\int_{\X} V(x') K(x,\df x') \leq \lambda V(x) + L 
			\]
			for each $x \in \X$, and
			\[
			c^{-1}\psi(x,y) \leq V(x) + V(y) + 1 
			\]
			for each $(x,y) \in \X \times \X$.
			\item[$(A2')$] There exist a differentiable random mapping~$f$ that induces~$K$, some $d > 2L/(1-\lambda)$, $\gamma < 1$, and $\gamma_0 < \infty$ such that
			\[
			\sup_{t \in [0,1]} \left\| \frac{\df}{\df t} f(x+t(y-x)) \right\| \leq \begin{cases}
				\gamma \|x-y\| & (x,y) \in C\\
				\gamma_0 \|x-y\| & \text{otherwise}
			\end{cases} \,,
			\]
			where $C = \{(x,y) \in \X \times \X: V(x) + V(y) \leq d  \}$.
			\item [$(A3')$] Either $\gamma_0 \leq 1$, or
			\[
			\frac{\log(2L+1)}{\log(2L+1)-\log \gamma} < \frac{-\log[(\lambda d + 2L + 1)/(d+1)]}{\log \gamma_0 - \log [(\lambda d + 2L + 1)/(d+1)]} \,.
			\]
			Then for each $x \in \X$, $m \geq 0$ and any real number $a$ such that
			\[
			\frac{\log(2L+1)}{\log(2L+1)-\log \gamma} < a < \frac{-\log[(\lambda d + 2L + 1)/(d+1)]}{\log(\gamma_0 \vee 1) - \log [(\lambda d + 2L + 1)/(d+1)]} \,,
			\]
			we have
			\[
			W_{\psi}(K_x^m, \Pi) \leq c \left( \frac{(\lambda + 1)V(x)+L+1}{1-\rho_a} \right) \rho_a^m \,,
			\]
			where
			\[
			\rho_a = [\gamma^a (2L+1)^{1-a}] \vee \left[ \gamma_0^a \left( \frac{\lambda d + 2L + 1}{d + 1} \right)^{1-a} \right] < 1 \,.
			\]
		\end{enumerate}
	\end{proposition}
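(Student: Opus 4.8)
The plan is to reduce the statement to Corollary~2.1 of \cite{qin2019geometric}---the extension of \pcite{douc2018markov} Theorem~20.4.5 cited in the text---by using the random mapping~$f$ to manufacture a coupling kernel whose one-step expected displacement satisfies the two-regime contraction that this corollary requires. That corollary takes as input a drift condition of the form in $(A1')$, together with a coupling kernel $\tilde{K}$ obeying
\[
\int_{\X \times \X} \psi(x',y') \, \tilde{K}\bigl( (x,y), (\df x', \df y') \bigr) \leq \begin{cases} \gamma \, \psi(x,y) & (x,y) \in C \,, \\ \gamma_0 \, \psi(x,y) & \text{otherwise} \,, \end{cases}
\]
and, under the restriction $(A3')$, returns exactly the geometric bound with rate $\rho_a$ asserted here. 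Thus the whole task is to verify that the hypotheses of that corollary hold.

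First I would let $\tilde{K}((x,y),\cdot)$ be the coupling kernel induced by the distribution of $(f(x),f(y))$, as in the paragraph preceding Corollary~\ref{cor:classical}; then the left-hand side above equals $\mathbb{E}\|f(x)-f(y)\|$. Lemma~\ref{lem:diffcontract} bounds this by $\sup_{t \in [0,1]} \mathbb{E}\|\df f(x+t(y-x))/\df t\|$. Since $(A2')$ provides an almost-sure pointwise bound on $\sup_t \|\df f(x+t(y-x))/\df t\|$, for each fixed~$t$ the integrand is dominated (a.s.) by $\gamma\|x-y\|$ on~$C$ and by $\gamma_0\|x-y\|$ otherwise; passing this bound through the expectation and then taking the supremum over~$t$ reproduces exactly the two-regime contraction displayed above, with $\psi(x,y)=\|x-y\|$.

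It then remains to align the remaining bookkeeping. The drift half of $(A1')$ is identical to the drift hypothesis of the corollary. The comparison inequality $c^{-1}\psi(x,y) \leq V(x)+V(y)+1$ is what converts the corollary's conclusion---phrased in terms of the intrinsic distance controlled by $V(x)+V(y)+1$---into the stated bound on $W_\psi$, and it is the source of the leading constant~$c$. This same inequality, combined with the drift, makes the finiteness condition needed for the coupling argument automatic: $\int_\X \psi(x,y)\,K(x,\df y) \leq c\bigl((1+\lambda)V(x)+L+1\bigr) < \infty$, which is reassuringly the very factor $(\lambda+1)V(x)+L+1$ appearing (times~$c$) in the conclusion. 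With these checks in place, the admissible range for~$a$, the formula for $\rho_a$, and the guarantee $\rho_a < 1$ under $(A3')$ are inherited directly from Corollary~2.1.

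The step I expect to require the most care is matching conventions between the norm-induced Wasserstein distance used here and the (more general) distance in which Corollary~2.1 is phrased---in particular, threading the constant~$c$ and the additive $+1$ in the comparison inequality correctly through the corollary's conclusion. The analytic content, namely the contraction estimate, is the easy part, delivered cleanly by Lemma~\ref{lem:diffcontract}; the obstacle is the purely organizational matter of faithfully aligning the hypotheses so that the cited corollary applies as a black box.
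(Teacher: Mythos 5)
Your proposal coincides with the paper's own proof, which is exactly the one-line reduction you describe: the paper obtains Proposition~\ref{pro:general} by passing the pointwise derivative bound in $(A2')$ through Lemma~\ref{lem:diffcontract} to get the two-regime contraction for the coupling kernel induced by $(f(x),f(y))$, and then invoking Corollary~2.1 of \cite{qin2019geometric} (the extension of Theorem~20.4.5 in \cite{douc2018markov}) as a black box, with $(A1')$ supplying the drift and comparison inequality and $(A3')$ guaranteeing an admissible~$a$ with $\rho_a<1$. Your bookkeeping of the constant~$c$ and of the factor $(\lambda+1)V(x)+L+1$ likewise mirrors how the cited corollary's conclusion is threaded into the stated Wasserstein bound.
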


	We now turn our attention to the conversion of Wasserstein distance
	bounds into total variation bounds.  Here is our main tool.
	
	\begin{proposition} \label{pro:madras} \citep{madras2010quantitative}
		Suppose that $K(x,\cdot), \; x \in \X \,,$ admits a density function
		$k(x,\cdot)$ with respect to some reference measure~$\mu$.  Suppose
		further that there exists $c \geq 0$ such that, for all $x,y \in \X$,
		\begin{equation} \label{ine:tvleqpsi}
			\int_{\X} |k(x, x') - k(y, x')| \, \mu(\df x') \leq c \psi(x,y) \;.
		\end{equation}
		Then for all $m \geq 1$ and $x \in \X$,
		\[
		\|K_x^m - \Pi\|_{\mbox{{\tiny TV}}} \leq \frac{c}{2} \mbox{W}_{\psi}
		(K_x^{m-1}, \Pi) \;.
		\]
	\end{proposition}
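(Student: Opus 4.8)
The plan is to exploit the one-step smoothing afforded by the stationarity identity $\Pi = \Pi K$, writing both $K_x^m$ and $\Pi$ as $K$-mixtures governed by an arbitrary coupling of $K_x^{m-1}$ and $\Pi$, and then to transfer the pointwise density bound~\eqref{ine:tvleqpsi} through that coupling.

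First I would fix an arbitrary coupling $\upsilon \in \Upsilon(K_x^{m-1}, \Pi)$ and observe that, because $K(x',\cdot)$ has density $k(x',\cdot)$ with respect to~$\mu$, the mixtures $K_x^m = \int_{\X} K(x',\cdot)\,K_x^{m-1}(\df x')$ and $\Pi = \int_{\X} K(y',\cdot)\,\Pi(\df y')$ both admit $\mu$-densities; using the marginals of~$\upsilon$, these densities can be written at a point $z \in \X$ as $\int_{\X \times \X} k(x',z)\,\upsilon(\df x',\df y')$ and $\int_{\X \times \X} k(y',z)\,\upsilon(\df x',\df y')$, respectively. The use of $\Pi = \Pi K$ here is what absorbs one transition step and is the conceptual heart of the argument.

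Next I would write the total variation distance as one-half the $L^1(\mu)$ distance between these two densities, pull the absolute value inside the coupling integral via the triangle inequality, and interchange the order of integration by Tonelli (the integrand being nonnegative):
\[
\|K_x^m - \Pi\|_{\mbox{{\tiny TV}}} \le \frac{1}{2} \int_{\X \times \X} \left( \int_{\X} |k(x',z) - k(y',z)|\,\mu(\df z) \right) \upsilon(\df x',\df y') \,.
\]
Applying the hypothesis~\eqref{ine:tvleqpsi} to the inner integral bounds the right-hand side by $\frac{c}{2} \int_{\X \times \X} \psi(x',y')\,\upsilon(\df x',\df y')$. Since~$\upsilon$ was an arbitrary coupling, taking the infimum over $\upsilon \in \Upsilon(K_x^{m-1},\Pi)$ yields the claimed bound $\frac{c}{2}\,\mbox{W}_{\psi}(K_x^{m-1},\Pi)$.

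The remaining work is measure-theoretic bookkeeping; the only steps requiring genuine care are verifying that the mixture densities are faithfully represented by the stated coupling integrals (so that Tonelli applies to a jointly measurable, nonnegative integrand) and confirming that~$\Pi$ inherits a $\mu$-density from stationarity. I would flag the selection of $\Pi = \Pi K$ as the main \emph{idea} rather than the main obstacle: it is precisely this device that converts a Wasserstein bound at time $m-1$ into a total variation bound at time~$m$, and everything downstream is the triangle inequality, Tonelli, and a direct application of~\eqref{ine:tvleqpsi}.
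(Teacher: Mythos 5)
Your proof is correct. The paper does not actually prove this proposition---it imports it from \citet{madras2010quantitative}---and your argument (represent $K_x^m$ and $\Pi=\Pi K$ as $K$-mixtures over an arbitrary coupling of $K_x^{m-1}$ and $\Pi$, bound TV by half the $L^1(\mu)$ distance of the induced densities, apply Tonelli and the hypothesis~\eqref{ine:tvleqpsi}, then take the infimum over couplings) is essentially the standard argument given there, with the key device, absorbing one transition step via stationarity, correctly identified.
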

	
	\begin{remark}
		In Proposition~\ref{pro:madras},~$\X$ is general -- it need not be a Euclidean space.
	\end{remark}

	Clearly, if the conditions of Proposition~\ref{pro:madras} are
	satisfied, then an upper bound on $\rho_*(\psi)$ also serves
	as an upper bound on $\rho_*^{\mbox{{\tiny TV}}}$.  For
	example, it is straightforward to show that
	\eqref{ine:tvleqpsi} holds for the aforementioned
	autoregressive process on $\mathbb{R}^p$ when~$\psi$ is the Euclidean distance.  
	Since we know from previous calculations that
	$\rho_*(\psi) \le 1/2$ for all~$p$, it 
	follows immediately that
	$\rho_*^{\mbox{{\tiny TV}}} \le 1/2$ for all~$p$.
	Recall that $1/2$ is, in fact, the true convergence rate.
	Hence, in this case, a TV bound converted from a Wasserstein bound is exact.  
	This is, of
	course, just a toy example, but recall how poor the drift and minorization
	bounds are for this toy.  In the next two sections, we use the results described in this section to analyze the convergence complexity of \pcite{albert1993bayesian}
	algorithm and a Gibbs algorithm for a Bayesian random effects model.

	\section{Albert and Chib's Chain for Bayesian Probit Regression}
	\label{SEC:AC}
	
	\subsection{Preliminaries}
	\label{ssec:prel}
	
	Let $x_1,x_2,\dots,x_n \in \mathbb{R}^p$, and let $y_1, y_2, \dots,
	y_n$ be independent binary responses such that, for each $i$, $y_i \sim \mbox{Bernoulli}(\Phi(x_i^T\beta))$, where
	$\beta \in \mathbb{R}^p$ is an unknown regression coefficient, and~$\Phi$ is again the cdf of the standard normal distribution.  The
	design matrix~$X$ is defined to be the $n \times p$ matrix whose $i$th
	row is $x_i^T$.  Denote the observed data by $y := (y_1 \; y_2 \,
	\dots \; y_n)^T$.  Consider a Bayesian analysis based on the prior
	\[
	\omega(\beta) \propto \exp\bigg[-\frac{1}{2} (\beta-v)^T Q
	(\beta-v)\bigg]\;, \; \beta \in \mathbb{R}^p \;,
	\]
	where $v \in \mathbb{R}^p$, and~$Q$ is a $p \times p$ symmetric matrix
	that is either positive definite (Gaussian prior) or vanishing (flat
	prior).  While the posterior distribution is automatically proper if
	$Q$ is positive-definite, additional conditions (on $X$ and $y$) are
	required to ensure propriety when $Q = 0$ \citep{chen2000propriety}.
	For the time being, we assume that the posterior is proper.  The
	posterior density is, of course, given by
	\[
	\pi(\beta|X, y) \propto \prod_{i=1}^{n} \Phi(x_i^T\beta)^{y_i} (1 -
	\Phi(x_i^T\beta))^{1-y_i} \omega(\beta) \;, \; \beta \in \mathbb{R}^p
	\;.
	\]
	A standard method of exploring this intractable posterior distribution is
	\pcite{albert1993bayesian} (A\&C's) data augmentation algorithm, which is one
	of the most well-known MCMC algorithms in Bayesian statistics.  We now
	state the algorithm.
	
	Let $\Sigma = X^TX+Q$.  Posterior propriety implies that $\Sigma$ is
	non-singular.  For $\mu \in \mathbb{R}$, $\tau > 0$, and $a \in
	\{0,1\}$, let $\mbox{TN}(\mu, \tau^2; a)$ be a normal distribution
	$\mbox{N}(\mu,\tau^2)$ that is truncated to $(-\infty, 0)$ if $a = 0$,
	and to $(0,\infty)$ if $a = 1$.  If the current state of the A\&C
	Markov chain is $\beta \in \mathbb{R}^p$, then the next state~$\tilde{\beta}$ is drawn
	according to the following procedure.
	
	\baro \vspace*{1mm}
	\begin{enumerate}
		\item Draw $\{Z_i\}_{i=1}^n$ independently with $Z_i \sim
		\mbox{TN}(x_i^T \beta, 1; y_i)$, and let $Z = (Z_1 \; Z_2 \, \cdots \;
		Z_n)^T$.
		\item Draw 
		$\tilde{\beta} \sim \mbox{N} \Big( \Sigma^{-1} \big(X^TZ + Qv \big),
		\Sigma^{-1} \Big) \;. 
		$
	\end{enumerate}
	\vspace*{1mm}
	\barba
	\bigskip
	
	The Markov transition density of the chain that is simulated by this
	algorithm is given by
	\begin{equation} 
		\label{eq:acmtd}
		k(\beta, \tilde{\beta}) = \int_{\mathbb{R}^n} \pi_1(\tilde{\beta}|z,X,y)
		\pi_2(z|\beta,X,y) \, \df z \;,
	\end{equation}
	where the exact forms of $\pi_1(\tilde{\beta}|z,X,y)$ and
	$\pi_2(z|\beta,X,y)$ can be gleaned from the algorithm.  It's well-known
	that this chain is reversible with respect to the posterior density
	$\pi(\cdot|X,y)$.
	Moreover, posterior propriety ensures that the chain is Harris ergodic \citep{roberts2006harris}.  Throughout
	this section, we will use~$\Pi$ to denote the stationary probability measure
	defined by $\pi(\cdot|X,y)$, and $K(\cdot,\cdot)$ the Markov transition function
	defined through~\eqref{eq:acmtd}.

	\citet{roy2007convergence} proved that when $Q=0$ (and the posterior is
	proper) the chain is always geometrically ergodic,
	i.e., $\rho_*^{\mbox{{\tiny TV}}}<1$.
	A similar result for proper normal priors was
	established by \citet{chakraborty2016convergence}.  
	Both of these
	results were proven using a drift-based technique that does not require
	construction of a minorization condition \citep[see][Lemma
	15.2.8]{meyn2012markov}, and consequently, does not yield an explicit
	upper bound on $\rho_*^{\mbox{\tiny TV}}$.  
	Thus, neither of these two papers
	addresses the issue of convergence complexity. 
	\citet{johndrow2016mcmc} established a (negative) convergence complexity
	result for the intercept only version of the model (where $p=1$ and $x_i = 1$ for $i=1,2,\dots,n$), under the
	assumption that all the responses are successes, i.e., $y_i = 1$ for $i=1,2,\dots,n$. 
	Their results, which are based on Cheeger’s inequality,
	imply that $\rho_*^{\mbox{{\tiny TV}}} \rightarrow 1$ as $n \rightarrow \infty$,
	indicating that the algorithm is inefficient for large samples when the data are severely imbalanced.
	
	The A\&C chain exhibits much better convergence properties when the data are less pathological.
	Using drift and minorization (d\&m), \citet{qin2019convergence} (Q\&H) constructed upper bounds on $\rho_*^{\mbox{{\tiny TV}}}$ that are, under regularity conditions, bounded away from~$1$ when either~$n$ or~$p$ (but not both) diverges, indicating that the A\&C chain converges rapidly in these asymptotic regimes.
	However, these bounds go to~$1$ when both~$n$ and~$p$ diverge.
	To be more specific, Q\&H constructed two upper bounds using Proposition~\ref{pro:rosen} in conjunction with two realizations of (A1) and (A2).
	In the minorization condition (A2), $1-\gamma$ is either of order $\varepsilon_1^{-p}$ or $\varepsilon_2^{-n}$, where $\varepsilon_1 > 1$ and $\varepsilon_2 > 1$ are constants.
	If only~$n$ or~$p$ tends to infinity, then at least one of the two realizations of~$\gamma$ is bounded away from~$1$.
	However, if both dimensions diverge, then both realizations of~$\gamma$ go to~$1$, which results in upper bounds that converge to~$1$.
	There are two possible reasons for this: (i) The A\&C chain converges slowly in these regimes, or (ii) The d\&m-based bounds are too conservative in these regimes, possibly due to the limitations of d\&m as seen in Section~\ref{sec:liability}.
	In what follows, we use techniques from Section~\ref{sec:wass} to construct a new upper bound on $\rho_*^{\mbox{{\tiny TV}}}$.
	This bound is much less conservative than the d\&m-based bounds in Q\&H when~$n$ and~$p$ are both large, and can be used to show that the A\&C chain converges rapidly in asymptotic regimes where~$n$ and~$p$ both diverge.

	Let $\|\cdot\|_2$ and $\psi_2(\cdot,\cdot)$ denote the Euclidean norm
	and the corresponding distance, respectively.  We find it
	convenient to work with a normalized version of $\|\cdot\|_2$.
	Throughout this section, let $\|\beta\| = (\beta^T \Sigma \beta
	)^{1/2}$, $\beta \in \mathbb{R}^p$, and denote by~$\psi$ the distance
	function that this norm induces.  As described in the previous
	section, we can use~$\psi$ to define a Wasserstein distance between
	probability measures on $\mathcal{B}(\mathbb{R}^p)$, the Borel sets of $\mathbb{R}^p$.  
	In the context
	of this section, $\rho_*(\psi)$ represents the convergence rate of the
	A\&C Markov chain with respect to the distance $\mbox{W}_{\psi}$.
	Note that for any pair of probability measures~$\mu$ and~$\nu$ on $\mathcal{B}(\mathbb{R}^p)$, we have
	\[
	\lambda_{\max}^{-1/2}(\Sigma) \mbox{W}_{\psi}(\mu,\nu) \leq
	\mbox{W}_{\psi_2}(\mu,\nu) \leq \lambda_{\min}^{-1/2}(\Sigma)
	\mbox{W}_{\psi}(\mu,\nu) \;,
	\]
	where $\lambda_{\max}(\cdot)$ and $\lambda_{\min}(\cdot)$ give the
	largest and smallest eigenvalues of a Hermitian matrix, respectively.  One can then show that $\rho_*(\psi) =
	\rho_*(\psi_2)$.
	
	We now give a result relating Wasserstein and total variation
	bounds for the A\&C chain. The proof of this result, which is given in Section~\ref{app:ac} of
	the Appendix, is based on Proposition~\ref{pro:madras} and several results stated in the next subsection.
	\begin{proposition} 
		\label{pro:tvbound}
		For each $m \geq 1$ and $\beta \in \mathbb{R}^p$,
		\begin{equation} 
			\nonumber
			\|K_{\beta}^m - \Pi\|_{\mbox{{\tiny TV}}} \leq \frac{1}{\sqrt{2
					\pi}} \mbox{W}_{\psi} (K_{\beta}^{m-1}, \Pi) \, .
		\end{equation}
	\end{proposition}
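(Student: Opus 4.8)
The plan is to apply Proposition~\ref{pro:madras} with $\psi$ the $\Sigma$-weighted distance. That proposition yields $\|K_\beta^m - \Pi\|_{\mbox{{\tiny TV}}} \le (c/2)\,\mbox{W}_\psi(K_\beta^{m-1},\Pi)$ as soon as one knows the density Lipschitz inequality $\int_{\mathbb{R}^p}|k(\beta,\tilde\beta) - k(\beta',\tilde\beta)|\,\df\tilde\beta \le c\,\psi(\beta,\beta')$ for all $\beta,\beta'$, so everything reduces to establishing this one-step estimate with the \emph{sharp} constant $c = \sqrt{2/\pi}$, which makes $c/2 = 1/\sqrt{2\pi}$.

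The first simplification is that the second (Gaussian) draw does not depend on the current state: in~\eqref{eq:acmtd} the factor $\pi_1(\tilde\beta|z,X,y)$ involves $z$ but not $\beta$. Hence $k(\beta,\tilde\beta) - k(\beta',\tilde\beta) = \int_{\mathbb{R}^n}\pi_1(\tilde\beta|z,X,y)[\pi_2(z|\beta,X,y) - \pi_2(z|\beta',X,y)]\,\df z$, and moving the absolute value inside, applying Tonelli, and integrating out $\tilde\beta$ (using $\int\pi_1(\tilde\beta|z,X,y)\,\df\tilde\beta = 1$) gives
\[
\int_{\mathbb{R}^p}|k(\beta,\tilde\beta) - k(\beta',\tilde\beta)|\,\df\tilde\beta \le \int_{\mathbb{R}^n}|\pi_2(z|\beta,X,y) - \pi_2(z|\beta',X,y)|\,\df z = 2\,\|Z_\beta - Z_{\beta'}\|_{\mbox{{\tiny TV}}},
\]
where $Z_\beta$ is the latent vector with independent coordinates $Z_{\beta,i}\sim\mbox{TN}(x_i^T\beta,1;y_i)$. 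It remains to show $\|Z_\beta - Z_{\beta'}\|_{\mbox{{\tiny TV}}} \le \psi(\beta,\beta')/\sqrt{2\pi}$.

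To capture the latent TV distance with the correct constant I would interpolate along $\beta(t) = \beta' + t(\beta-\beta')$, $t\in[0,1]$, and integrate the $t$-derivative of the product density $p_t(z) = \prod_i g_i(z_i;x_i^T\beta(t))$. Writing $\delta = \beta-\beta'$, the score of the $i$th truncated normal in its mean parameter is $z_i - \mathbb{E}Z_{\beta(t),i}$, so $\partial_t p_t(z) = p_t(z)\sum_i (x_i^T\delta)(z_i - \mathbb{E}Z_{\beta(t),i})$ and $\int|\partial_t p_t| = \mathbb{E}\big|\sum_i (x_i^T\delta)(Z_{\beta(t),i} - \mathbb{E}Z_{\beta(t),i})\big|$ is exactly the mean absolute deviation $\mathrm{MAD}$ of the linear combination $S_t := \sum_i (x_i^T\delta)Z_{\beta(t),i}$. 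Thus $\|Z_\beta - Z_{\beta'}\|_{\mbox{{\tiny TV}}} \le \tfrac12\sup_{t}\mathrm{MAD}(S_t)$, and the whole proposition reduces to the key lemma that for independent unit-scale truncated normals and arbitrary weights $c_1,\dots,c_n$,
\[
\mathrm{MAD}\Big(\textstyle\sum_i c_i Z_i\Big) \le \sqrt{2/\pi}\,\Big(\textstyle\sum_i c_i^2\Big)^{1/2}.
\]
Taking $c_i = x_i^T\delta$ gives $\mathrm{MAD}(S_t) \le \sqrt{2/\pi}\,\|X\delta\|_2 \le \sqrt{2/\pi}\,\|\delta\| = \sqrt{2/\pi}\,\psi(\beta,\beta')$, the last inequality because $\Sigma - X^TX = Q \succeq 0$ forces $\|X\delta\|_2^2 = \delta^TX^TX\delta \le \delta^T\Sigma\delta = \|\delta\|^2$; hence $\tfrac12\mathrm{MAD}(S_t) \le \psi(\beta,\beta')/\sqrt{2\pi}$, as needed. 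Here the normalized norm is precisely what makes the passage from $\|X\delta\|_2$ to $\psi$ lossless.

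The key lemma is where the real work lies, because the factor $\sqrt{2/\pi}$ is both essential and fragile. The naive route — bounding $\mathrm{MAD}$ by the standard deviation and using that each truncated normal has variance at most $1$ — only produces the constant $\tfrac12$, which is too weak. To recover $\sqrt{2/\pi}$ I would argue via convex order. Each $\mbox{TN}(\mu,1;a)$ is log-concave with log-density curvature no less than that of the standard Gaussian (the quadratic part contributes curvature $1$, and the hard truncation only adds to it), so by a quantile/Caffarelli contraction it is the pushforward of a standard normal $G$ under a $1$-Lipschitz map; a centered $1$-Lipschitz image of a Gaussian is dominated in convex order by that Gaussian, giving $Z_i - \mathbb{E}Z_i \leq_{\mathrm{cx}} G_i$ coordinatewise. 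Since convex order is preserved under scaling and under independent convolution, $\sum_i c_i(Z_i - \mathbb{E}Z_i) \leq_{\mathrm{cx}} \sum_i c_i G_i \sim \mbox{N}(0,\sum_i c_i^2)$, and evaluating the convex map $|\cdot|$ yields $\mathrm{MAD}(\sum_i c_iZ_i) \le \mathbb{E}|\mbox{N}(0,\sum_i c_i^2)| = \sqrt{2/\pi}(\sum_i c_i^2)^{1/2}$. The main obstacle is thus the single-coordinate statement that a centered truncated normal is dominated in convex order by the standard Gaussian; this can in turn be reduced to an explicit scalar (stop-loss / inverse-Mills-ratio) inequality, but it is the one genuinely analytic input, and the sharp constant stands or falls with it.
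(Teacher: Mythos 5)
Your architecture is sound and genuinely different from the paper's: the reduction via Proposition~\ref{pro:madras} to a one-step $L^1$ estimate with constant $\sqrt{2/\pi}$, the data-processing step that integrates out the Gaussian layer (bounding $\int|k(\beta,\cdot)-k(\beta',\cdot)|$ by the $L^1$ distance of the latent truncated-normal laws), the exponential-family identity that the location score of $\mbox{TN}(\mu,1;a)$ is $z-\mathbb{E}Z$, and the resulting reduction to $\mathrm{MAD}\big(\sum_i c_iZ_i\big)\le\sqrt{2/\pi}\big(\sum_i c_i^2\big)^{1/2}$, which together with $\|X\delta\|_2\le\|\delta\|$ (from $Q\succcurlyeq0$) recovers exactly $1/\sqrt{2\pi}$. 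The genuine gap is exactly where you flagged it: the coordinatewise claim $Z_i-\mathbb{E}Z_i\le_{\mathrm{cx}}\mbox{N}(0,1)$ is asserted, not proved, and the two-step justification you sketch does not close it. Caffarelli's contraction theorem does give a $1$-Lipschitz (and, in one dimension, non-decreasing) transport map from the standard Gaussian onto $\mbox{TN}(\mu,1;a)$; but your second step --- ``a centered $1$-Lipschitz image of a Gaussian is dominated in convex order by that Gaussian'' --- is itself a nontrivial theorem rather than an off-the-shelf fact (the standard Gaussian-space tool in this direction, Pisier's inequality, only yields domination by $(\pi/2)G$, and your bound stands or falls on the constant being $1$). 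The claim you need is true and the gap is fillable: for a \emph{non-decreasing} $1$-Lipschitz $T$, the map $T-\mathrm{id}$ is non-increasing, so the quantile functions of $T(G)-\mathbb{E}T(G)$ and of $G$ cross exactly once, and the Karlin--Novikoff single-crossing criterion (with equal means) yields convex order; alternatively, convex-order domination of recentered log-concave perturbations of a Gaussian by that Gaussian is a theorem of Harg\'e (2004). But as written, your proof outsources its one essential analytic input to an unproved assertion.

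For comparison, the paper obtains the constant from a far more elementary source by \emph{keeping} the Gaussian layer instead of integrating it out. It writes $k(\beta,\tilde\beta)=\mathbb{E}\,\pi_1(\tilde\beta\,|\,Z(\beta,U),X,y)$ with the same uniforms $U$ used for $\beta$ and $\beta'$ (inverse-cdf coupling), applies Jensen to bound the $L^1$ distance of the kernels by the expected total variation distance between two Gaussians $\mbox{N}(\Sigma^{-1}(X^Tz+Qv),\Sigma^{-1})$ with equal covariance, and uses the explicit formula for that distance together with $2-4\Phi(-x/2)\le 2x/\sqrt{2\pi}$; the factor $1/\sqrt{2\pi}$ is just the Gaussian density at its mode. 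The resulting mean difference is exactly $f(\beta)-f(\beta')$, and $\mathbb{E}\|f(\beta)-f(\beta')\|\le\|\beta-\beta'\|$ is already available from Lemmas~\ref{lem:diffcontract}, \ref{lem:acdiff} and~\ref{lem:lambda<1} (ultimately Lemma~\ref{lem:s}). So where your route requires a new sharp convex-order inequality for truncated normals, the paper's requires only contraction estimates it has already proved; your route is more self-contained at the latent level and yields, as a byproduct, a sharp TV bound between the latent distributions, but it is strictly harder analytically, and in its present form incomplete.
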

	
	An immediate consequence of Proposition~\ref{pro:tvbound} is that an
	upper bound on $\rho_*(\psi)$ is also an upper bound
	on~$\rho_*^{\mbox{{\tiny TV}}}$.  
	
	Let us now construct a random mapping that induces the A\&C Markov
	chain.  For any $\mu \in \mathbb{R}$ and $a \in \{0,1\}$, let
	$H(\cdot,\mu,a)$ be the inverse cumulative distribution function of
	$\mbox{TN}(\mu, 1; a)$.  Routine calculation shows that
	\[
	H(u,\mu,a) = \mu - \Phi^{-1}(\Phi(\mu)(1-u)) \,1_{\{1\}}(a) +
	\Phi^{-1}(\Phi(-\mu)u)\, 1_{\{0\}}(a) \;.
	\]
	Let $(\Omega, \mathcal{F}, \mathbb{P})$ be a probability space. Let
	$N: \Omega \to \mathbb{R}^p$ be a $p$-dimensional standard normal
	vector, and independently, let $U: \Omega \to (0,1)^n$ be a vector of
	iid $\mbox{Uniform}(0,1)$ variables. For $i \in \{1,2, \dots, n\}$,
	denote the $i$th element of~$U$ by $U_i$. Consider the random mapping
	\[
	f(\beta) = \Sigma^{-1} \sum_{i=1}^{n} x_i H(U_i,x_i^T\beta,y_i) +
	\Sigma^{-1}Qv + \Sigma^{-1/2}N \;, \; \beta \in \mathbb{R}^p \;.
	\]
	Clearly, for fixed $\beta$, $\tilde{\beta} = f(\beta)$ follows the distribution
	defined by~\eqref{eq:acmtd}.  Hence,~$f$ induces~$K$.
	
	\subsection{Analysis of the random mapping}
	\label{ssec:random_map}
	
	The goal of this subsection is to construct an upper bound on
	$\mathbb{E}\|\frac{\df}{\df t} f(\beta + t(\beta'-\beta))\|$
	for $\beta, \beta' \in \mathbb{R}^p$ and $t \in [0,1]$, where the expectation is taken with respect to $(U,N)$.
	Let $\alpha, \beta \in \mathbb{R}^p$, and assume that $\alpha \neq
	0$. Then for $t \in [0,1]$ and any given value of $(U,N)$,
	\begin{equation} \label{ine:f-diff}
		\frac{\df}{\df t} f(\beta+t\alpha) = \Sigma^{-1} \sum_{i=1}^n x_i \frac{\partial}{\partial \mu} H(U_i, \mu, y_i) \bigg|_{\mu = x_i^T(\beta+t\alpha)} x_i^T \alpha \,.
	\end{equation}
	Denote by $\phi(\cdot)$ the density function of the standard normal distribution.
	For $a \in \{0,1\}$,
	\begin{equation*} 
		\frac{\partial}{\partial \mu} H(u,\mu,a) = s(1-u,\mu) 1_{\{1\}}(a) +
		s(u,-\mu) 1_{\{0\}}(a) \;,
	\end{equation*}
	where $s: (0,1) \times \mathbb{R} \rightarrow \mathbb{R}$ is defined
	as follows:
	\[
	s(u,\mu) = 1 - \frac{u \phi(\mu)}{\phi[\Phi^{-1}(\Phi(\mu)u)]} \;.
	\]
	Let $S(\beta,U)$ be a diagonal matrix whose $ii$th element
	is $\frac{\partial }{\partial \mu} H(U_i, \mu, y_i)$ evaluated at $\mu = x_i^T\beta$.
	By Lemma~\ref{lem:s} in Section~\ref{app:randomac} of the Appendix, $S(\beta,U)$ is non-negative definite.
	It follows
	from~\eqref{ine:f-diff} that
	\[
	\begin{aligned}
		\left\|\frac{\df}{\df t} f(\beta+t\alpha) \right\| &= \left\|\Sigma^{-1/2} \sum_{i=1}^n x_i \frac{\partial }{\partial \mu} H(U_i, \mu, y_i) \bigg|_{\mu = x_i^T(\beta+t\alpha)} x_i^T \alpha \right\|_2 \\
		& \leq 
		\lambda_{\max}\left[ \Sigma^{-1/2} X^T S(\beta + t\alpha,U) X 
		\Sigma^{-1/2} \right] \|\Sigma^{1/2} \alpha\|_2 \,.
	\end{aligned}
	\]
	This gives us the following.
	\begin{lemma} \label{lem:acdiff}
		For $\beta, \beta' \in \mathbb{R}^p$ and $t \in [0,1]$,
		\begin{equation} \nonumber
			\begin{aligned}
				&\mathbb{E} \left\|\frac{\df}{\df t} f(\beta+t(\beta'-\beta)) \right\| \\
				&\leq 
				\mathbb{E} \lambda_{\max}\left[ \Sigma^{-1/2} X^T S(\beta + t(\beta'-\beta),U) X 
				\Sigma^{-1/2} \right]  \|\beta'-\beta\| \,.
			\end{aligned}
		\end{equation}
	\end{lemma}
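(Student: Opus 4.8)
The plan is to deduce the lemma directly from the pointwise bound established in the display immediately preceding the statement, simply by taking expectations. Writing $\alpha = \beta' - \beta$, the chain-rule formula~\eqref{ine:f-diff} shows that, for every realization of $(U,N)$ (the additive $\Sigma^{-1/2}N$ term is constant in $\beta$ and so plays no role), the derivative $\df f(\beta + t\alpha)/\df t$ equals $\Sigma^{-1}\sum_i x_i\, [\partial H(U_i,\mu,y_i)/\partial\mu]|_{\mu = x_i^T(\beta+t\alpha)}\, x_i^T\alpha$. The first step is to recognize this sum, after premultiplying by $\Sigma^{1/2}$, as $\Sigma^{-1/2} X^T S(\beta+t\alpha,U) X\,\alpha$, where $S(\beta+t\alpha,U)$ is the diagonal matrix whose $ii$th entry is the relevant partial derivative.

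Next I would bound the $\psi$-norm pointwise, using $\|v\| = \|\Sigma^{1/2}v\|_2$. Factoring $\alpha = \Sigma^{-1/2}\,(\Sigma^{1/2}\alpha)$ gives
\[
\left\| \frac{\df f(\beta+t\alpha)}{\df t} \right\| = \left\| A\, \Sigma^{1/2}\alpha \right\|_2, \qquad A := \Sigma^{-1/2} X^T S(\beta+t\alpha,U) X\,\Sigma^{-1/2} \,.
\]
The matrix $A$ is symmetric, and by Lemma~\ref{lem:s} the diagonal entries of $S$ are non-negative, so $S$ and hence $A$ are non-negative definite. For such a matrix the operator norm coincides with $\lambda_{\max}(A)$, which yields $\|\df f(\beta+t\alpha)/\df t\| \le \lambda_{\max}(A)\,\|\Sigma^{1/2}\alpha\|_2$ — precisely the displayed pointwise inequality. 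Here $\|\Sigma^{1/2}\alpha\|_2 = (\alpha^T\Sigma\alpha)^{1/2} = \|\beta'-\beta\|$ is a deterministic constant.

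Finally I would take expectation over $(U,N)$ on both sides. Since the factor $\|\beta'-\beta\|$ is nonrandom it pulls out of the expectation, and the remaining randomness enters only through $S(\beta+t\alpha,U)$, giving
\[
\mathbb{E}\left\| \frac{\df f(\beta+t(\beta'-\beta))}{\df t} \right\| \le \mathbb{E}\,\lambda_{\max}\!\left[ \Sigma^{-1/2} X^T S(\beta+t(\beta'-\beta),U) X \Sigma^{-1/2} \right] \|\beta'-\beta\| \,,
\]
which is the claim. There is no serious obstacle here: the substantive work is the pointwise bound, and the only points requiring care are the non-negative definiteness of $S$ (so the operator norm of the symmetric $A$ equals $\lambda_{\max}(A)$), supplied by Lemma~\ref{lem:s}, and the measurability of $U \mapsto \lambda_{\max}[\Sigma^{-1/2}X^T S X\Sigma^{-1/2}]$. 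Because this quantity is non-negative, its expectation is well defined as an element of $[0,\infty]$, so the inequality holds unconditionally; finiteness of the bound is a separate matter to be addressed when it is applied.
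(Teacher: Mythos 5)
Your proposal is correct and follows essentially the same route as the paper: the paper derives the lemma from the chain-rule expression~\eqref{ine:f-diff} by rewriting the $\psi$-norm as $\|\Sigma^{-1/2}X^TS(\beta+t\alpha,U)X\Sigma^{-1/2}\,\Sigma^{1/2}\alpha\|_2$, bounding it pointwise by $\lambda_{\max}[\Sigma^{-1/2}X^TS(\beta+t\alpha,U)X\Sigma^{-1/2}]\,\|\Sigma^{1/2}\alpha\|_2$ using the non-negative definiteness of $S$ supplied by Lemma~\ref{lem:s}, and then taking expectations with the deterministic factor $\|\Sigma^{1/2}\alpha\|_2=\|\beta'-\beta\|$ pulled out. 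Your additional remarks on measurability and on why non-negative definiteness (rather than mere symmetry) is needed for the operator norm to equal $\lambda_{\max}$ are sound but do not change the argument.
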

	
	Studying the magnitude of 
	\[
	\mathbb{E} \lambda_{\max}\left[ \Sigma^{-1/2} X^T S(\beta + t(\beta'-\beta),U) X 
	\Sigma^{-1/2} \right]
	\]
	allows for sharp contraction bounds, which in turn facilitates detailed analyses of the A\&C chain's convergence rate in different asymptotic regimes.
	We collect the relevant results in Section~\ref{app:randomac} of the Appendix.
	One of the more important results is stated below.
	\begin{lemma} \label{lem:lambda<1}
		For any $\beta \in \mathbb{R}^p$ and $\tilde{u} \in (0,1)^n$,
		\[
		\lambda_{\max} \left( \Sigma^{-1/2} X^T S(\beta, \tilde{u}) X
		\Sigma^{-1/2} \right) < 1\;.
		\]
	\end{lemma}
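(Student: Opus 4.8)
The plan is to reduce the eigenvalue bound to a comparison of symmetric matrices in the Loewner order, after first pinning down the range of the diagonal entries of $S(\beta,\tilde{u})$. Recall that the $ii$th entry of $S(\beta,\tilde{u})$ equals $\partial H(\tilde{u}_i,\mu,y_i)/\partial\mu$ evaluated at $\mu = x_i^T\beta$, which by the formula preceding the lemma is either $s(1-\tilde{u}_i, x_i^T\beta)$ (when $y_i=1$) or $s(\tilde{u}_i, -x_i^T\beta)$ (when $y_i=0$). Non-negativity of these entries is exactly the content of Lemma~\ref{lem:s}. For the strict upper bound I would read it off the definition $s(u,\mu) = 1 - u\phi(\mu)/\phi[\Phi^{-1}(\Phi(\mu)u)]$: since $\tilde{u}_i \in (0,1)$ and the Gaussian density $\phi$ is strictly positive everywhere, the subtracted term is strictly positive, whence $s < 1$. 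Thus $0 \leq S_{ii} < 1$ for every $i$; equivalently, $S(\beta,\tilde{u})$ is non-negative definite and $I - S(\beta,\tilde{u})$ is positive definite.

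Next I would use the standard fact that, for a symmetric matrix $M$ and a positive-definite $\Sigma$, one has $\lambda_{\max}(\Sigma^{-1/2}M\Sigma^{-1/2}) < 1$ if and only if $\Sigma - M$ is positive definite (the eigenvalues of $\Sigma^{-1/2}M\Sigma^{-1/2}$ are the generalized eigenvalues of the pencil $(M,\Sigma)$, and congruence by the invertible $\Sigma^{-1/2}$ preserves positive definiteness). Applying this with $M = X^T S(\beta,\tilde{u}) X$, it suffices to show that $\Sigma - X^T S(\beta,\tilde{u}) X$ is positive definite. Since $\Sigma = X^T X + Q$, I would write
\[
\Sigma - X^T S(\beta,\tilde{u}) X = X^T\big(I - S(\beta,\tilde{u})\big) X + Q \,,
\]
a sum of the non-negative definite matrix $X^T(I-S(\beta,\tilde{u}))X$ and the prior precision $Q$.

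It then remains to verify strict positive definiteness of this sum, and this is the only genuinely delicate point, since the two prior specifications must be handled separately. When $Q$ is positive definite (Gaussian prior), positivity is immediate: for $v \neq 0$ we have $v^T X^T(I-S(\beta,\tilde{u}))X v \geq 0$ and $v^T Q v > 0$. When $Q = 0$ (flat prior), I would invoke posterior propriety, which forces $\Sigma = X^T X$ to be non-singular and hence $X$ to have full column rank; then $Xv \neq 0$ for every $v \neq 0$, and because $I - S(\beta,\tilde{u})$ is positive definite we get $v^T X^T(I-S(\beta,\tilde{u}))X v = (Xv)^T(I-S(\beta,\tilde{u}))(Xv) > 0$. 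In either case $\Sigma - X^T S(\beta,\tilde{u}) X$ is positive definite, so $\lambda_{\max}(\Sigma^{-1/2} X^T S(\beta,\tilde{u}) X \Sigma^{-1/2}) < 1$, as claimed. The main obstacle is precisely the flat-prior case: there the strict bound $S_{ii} < 1$ cannot be relaxed, and the full-rank consequence of propriety is essential, whereas the Gaussian case needs only $S_{ii} \leq 1$.
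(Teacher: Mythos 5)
Your proof is correct and essentially identical to the paper's: both rest on the strict bound $S_{ii}<1$ from Lemma~\ref{lem:s}, a Loewner-order comparison of $X^T S(\beta,\tilde{u}) X$ against $\Sigma = X^TX + Q$, and the same case split between $Q$ positive definite and $Q=0$ (where posterior propriety gives $X$ full column rank). The only cosmetic difference is that you compare matrices before conjugating by $\Sigma^{-1/2}$, making the congruence equivalence explicit, whereas the paper conjugates first and invokes Weyl's inequality at the end.
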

	As an immediate consequence of Lemmas~\ref{lem:acdiff} and~\ref{lem:lambda<1}, for each $\beta, \beta' \in \mathbb{R}^p$,
	\[
	\sup_{t \in [0,1]} \mathbb{E} \left\|\frac{\df}{\df t} f(\beta+t(\beta'-\beta)) \right\| \leq \|\beta'-\beta\| \,.
	\]
	
	The quantity $
	\mathbb{E} \lambda_{\max} \left( \Sigma^{-1/2} X^T \; S(\beta, U) \; X
	\Sigma^{-1/2} \right)$ can be quite difficult to analyze.
	We develop an approximation of this quantity using
	\[
	\hat{\gamma}(\beta) := \lambda_{\max} \left( \Sigma^{-1/2} X^T \;
	\mathbb{E} S(\beta, U) \; X \Sigma^{-1/2} \right) \;.
	\]
	A proof of the following
	lemma is given in Section~\ref{app:randomac}.
	
	\begin{lemma} 
		\label{lem:approx}
		For $\beta \in \mathbb{R}^p$, $\mathbb{E} \lambda_{\max} \left( \Sigma^{-1/2} X^T \; S(\beta, U) \; X
		\Sigma^{-1/2} \right)$ is
		bounded above by $\hat{\gamma}(\beta) + \sigma (2 \log p)^{1/2}$,
		where
		\[
		\sigma = \lambda_{\max}^{1/2} \left[\sum_{i=1}^{n}
		(\Sigma^{-1/2}x_ix_i^T\Sigma^{-1/2})^2\right] \;.
		\]
	\end{lemma}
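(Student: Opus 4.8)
The plan is to view $A(\beta,U) := \Sigma^{-1/2}X^T S(\beta,U) X \Sigma^{-1/2}$ as a sum of independent, bounded, rank-one random matrices, and then apply a sub-Gaussian (Hoeffding-type) matrix concentration inequality. Writing $w_i = \Sigma^{-1/2}x_i$ and letting $s_i$ denote the $i$th diagonal entry of $S(\beta,U)$, we have the representation $A(\beta,U) = \sum_{i=1}^n s_i\, w_i w_i^T$. Because $U_1,\dots,U_n$ are independent, the scalars $s_1,\dots,s_n$ are independent; and combining Lemma~\ref{lem:s} with the explicit form of $s(\cdot,\cdot)$ shows that $s_i \in [0,1]$ almost surely. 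The first step is to split off the mean, $A(\beta,U) = \mathbb{E}A(\beta,U) + \sum_{i=1}^n Y_i$ with $Y_i := (s_i - \mathbb{E}s_i)\, w_i w_i^T$, and use the subadditivity of the largest eigenvalue on Hermitian matrices (Weyl's inequality), $\lambda_{\max}(M+N) \le \lambda_{\max}(M) + \lambda_{\max}(N)$. Taking expectations and recalling that $\lambda_{\max}(\mathbb{E}A(\beta,U)) = \hat\gamma(\beta)$ by definition gives
\[
\mathbb{E}\,\lambda_{\max}\big(A(\beta,U)\big) \le \hat\gamma(\beta) + \mathbb{E}\,\lambda_{\max}\Big(\sum_{i=1}^n Y_i\Big).
\]

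It then remains to control the expected largest eigenvalue of the fluctuation $\sum_i Y_i$. The summands $Y_i$ are independent, Hermitian, and mean-zero. The key observation is the deterministic semidefinite domination: since $s_i, \mathbb{E}s_i \in [0,1]$ we have $|s_i - \mathbb{E}s_i| \le 1$, whence almost surely
\[
Y_i^2 = (s_i - \mathbb{E}s_i)^2\,(w_iw_i^T)^2 \preceq (w_iw_i^T)^2 .
\]
This is exactly the hypothesis needed to invoke the matrix Hoeffding inequality in its expectation form (Tropp, \emph{An Introduction to Matrix Concentration Inequalities}, Thm.~4.6.1), which for Hermitian $p \times p$ summands yields
\[
\mathbb{E}\,\lambda_{\max}\Big(\sum_{i=1}^n Y_i\Big) \le \sqrt{2\,\nu\,\log p}, \qquad \nu = \Big\|\sum_{i=1}^n (w_iw_i^T)^2\Big\| .
\]

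Finally I would identify the variance parameter with $\sigma^2$. Since $w_i w_i^T = \Sigma^{-1/2}x_i x_i^T \Sigma^{-1/2}$ and $\sum_i (w_iw_i^T)^2$ is positive semidefinite, its operator norm equals its largest eigenvalue, so $\nu = \lambda_{\max}\big[\sum_{i=1}^n (\Sigma^{-1/2}x_i x_i^T\Sigma^{-1/2})^2\big] = \sigma^2$. Substituting this into the two displayed bounds gives $\mathbb{E}\,\lambda_{\max}(A(\beta,U)) \le \hat\gamma(\beta) + \sigma\sqrt{2\log p}$, which is the claim.

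The only genuinely delicate point is the almost-sure two-sided bound $s_i \in [0,1]$: it is what legitimizes both the mean-centered decomposition and, more importantly, the semidefinite domination $Y_i^2 \preceq (w_iw_i^T)^2$, and it is here that the positivity established in Lemma~\ref{lem:s} together with the closed form of $s$ is essential. Everything else is routine once this bound is secured. The place to be careful is matching the stated constant $\sqrt{2}$ and the correct $\log p$ factor (the ambient dimension of the $p\times p$ matrices), since several normalizations of the matrix Hoeffding inequality circulate in the literature; the variance must be taken as the \emph{almost-sure} bound $\|\sum_i (w_iw_i^T)^2\|$ rather than the true matrix variance, as the former is what avoids an additional linear-in-$\log p$ term.
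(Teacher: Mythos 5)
Your proposal is correct and follows essentially the same route as the paper: center the diagonal entries of $S(\beta,U)$, write the fluctuation as a sum of independent mean-zero rank-one Hermitian matrices, establish the semidefinite domination $Y_i^2 \preccurlyeq (w_iw_i^T)^2$ from the $[0,1]$ bound of Lemma~\ref{lem:s}, apply a matrix Hoeffding inequality in expectation form, and combine via Weyl's inequality. The only difference is cosmetic --- the paper invokes the matrix Hoeffding bound of \cite{mackey2014matrix} (its Lemma~\ref{lem:hoeffding}) rather than Tropp's monograph, with the same variance parameter and the same constant $\sqrt{2\log p}$.
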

	
	\begin{remark} 
		\label{rem:error}
		The error term $\sigma (2 \log p)^{1/2}$ is typically small
		when~$n$ is a lot larger than~$p$. As an illustration, suppose
		that there exist positive constants $\ell_1$ and $\ell_2$ such that
		$\|x_i\|_2^2 \leq \ell_1 p$ for all~$i$, and
		$\lambda_{\min}(\Sigma) \geq \ell_2 n$. Then
		\[
		\sigma^2 = \lambda_{\max} \left[ \sum_{i=1}^{n}
		(x_i^T\Sigma^{-1}x_i) (\Sigma^{-1/2}x_ix_i^T\Sigma^{-1/2})
		\right] \leq
		\frac{\ell_1p}{\ell_2n} \;.
		\]
		Hence, $\sigma(2 \log p)^{1/2} \to 0$ so long as $n / (p \log
		p) \to \infty$.
	\end{remark}
	
	We will now proceed to examine the convergence properties of the A\&C
	Markov chain.  In Subsection~\ref{ssec:fixed}, we use Proposition~\ref{pro:general}
	to establish a bound on the chain's
	Wasserstein distance to stationarity.
	In
	Subsection~\ref{ssec:ConvCompAnal}, we use
	Corollary~\ref{cor:classical} to perform a convergence complexity
	analysis.
	
	\subsection{A bound on the Wasserstein distance for a fixed data set}
	\label{ssec:fixed}
	
	Q\&H developed a drift condition for the A\&C
	chain that is a substantial improvement upon previous such conditions
	established in \cite{roy2007convergence} and
	\cite{chakraborty2016convergence}.  Let $V: \mathbb{R}^p \rightarrow
	[0,\infty)$ be defined as $V(\beta) = \|\beta-\hat{\beta}\|^2/p$,
	where $\hat{\beta}$ is the (unique) mode of $\pi(\beta|X,y)$.
	Define $\Lambda(\beta), \, \beta \in \mathbb{R}^p,$ to be an
	$n \times n$ diagonal matrix whose $i$th diagonal element is
	\[
	\begin{aligned}
		1 &- \left[ \frac{x_i^T\beta \phi(x_i^T\beta)}{\Phi(x_i^T\beta)} +
		\left(\frac{\phi(x_i^T\beta)}{\Phi(x_i^T\beta)}\right)^2 \right]
		1_{\{1\}}(y_i) \\
		&- \left[ -\frac{x_i^T\beta
			\phi(x_i^T\beta)}{1-\Phi(x_i^T\beta)} +
		\left(\frac{\phi(x_i^T\beta)}{1-\Phi(x_i^T\beta)}\right)^2 \right]
		1_{\{0\}}(y_i) \;.
	\end{aligned}
	\]
	Here is the drift condition.
	\begin{lemma} \label{lem:drift} \citep[][Proposition~9]{qin2019convergence}
		For every $\beta \in \mathbb{R}^p$,
		\[
		\int_{\mathbb{R}^p} V(\beta') k(\beta,\beta') \, \df \beta'
		\leq \lambda V(\beta) + 2 \;,
		\]
		where
		\[
		\lambda = \sup_{t \in (0,1)} \sup_{\alpha \in
			\mathbb{R}^p\setminus\{0\}} \frac{\| \Sigma^{-1} X^T
			\,\Lambda(\hat{\beta}+t\alpha)\, X \alpha
			\|^2}{\|\alpha\|^2} < 1 \;.
		\]
	\end{lemma}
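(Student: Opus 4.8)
The plan is to establish the drift inequality by conditioning on the latent vector $Z$ drawn in Step~1 of the algorithm and then averaging, and to treat the assertion $\lambda<1$ as a separate (harder) matter. Since $\tilde\beta\mid Z\sim\mathrm{N}(\Sigma^{-1}(X^TZ+Qv),\Sigma^{-1})$, the elementary identity $\mathbb{E}[(W-a)^T\Sigma(W-a)]=\|\mathbb{E}W-a\|^2+\mathrm{tr}(\Sigma\,\mathrm{Cov}(W))$ (with $\|\cdot\|$ the $\Sigma$-norm) gives $\mathbb{E}[\|\tilde\beta-\hat\beta\|^2\mid Z]=\|g(Z)\|^2+p$, where $g(Z):=\Sigma^{-1}(X^TZ+Qv)-\hat\beta$ and the term $p$ comes from $\mathrm{tr}(\Sigma\,\Sigma^{-1})$. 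Averaging over $Z$ (whose coordinates $Z_i\sim\mathrm{TN}(x_i^T\beta,1;y_i)$ are independent) and applying the same decomposition once more yields $\mathbb{E}\|g(Z)\|^2=\|\mathbb{E}g(Z)\|^2+\mathrm{tr}(\Sigma\,\mathrm{Cov}(g(Z)))$. I would bound the variance term and the mean term separately.

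For the variance term, $\mathrm{Cov}(g(Z))=\Sigma^{-1}X^TDX\Sigma^{-1}$, where $D=\mathrm{Cov}(Z)$ is diagonal with entries equal to the variances of the truncated normals; each such variance is at most the untruncated variance $1$, so $D\preceq I_n$. Using $\mathrm{tr}(DM)\le\mathrm{tr}(M)$ whenever $0\preceq D\preceq I_n$ and $M:=X\Sigma^{-1}X^T\succeq 0$, together with $\mathrm{tr}(X\Sigma^{-1}X^T)=\mathrm{tr}(\Sigma^{-1}X^TX)=\mathrm{tr}(I_p-\Sigma^{-1}Q)\le p$ (since $Q\succeq0$), the variance term is at most $p$. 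With the earlier $p$ from the conditional law of $\tilde\beta$, the two variance contributions sum to at most $2p$, which is exactly the origin of the additive constant $2$ once we divide by $p$.

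The mean term is where $\hat\beta$ and $\Lambda$ enter. Let $m_i(\mu)$ be the mean of $\mathrm{TN}(\mu,1;y_i)$ and $m(X\beta):=(m_1(x_1^T\beta),\dots,m_n(x_n^T\beta))^T$, so $\mathbb{E}Z=m(X\beta)$ and $\mathbb{E}g(Z)=\Sigma^{-1}(X^Tm(X\beta)+Qv)-\hat\beta$. A short computation shows the score of the log-posterior is $\nabla\log\pi(\beta)=X^Tm(X\beta)+Qv-\Sigma\beta$, so the stationarity condition $\nabla\log\pi(\hat\beta)=0$ reads $\hat\beta=\Sigma^{-1}(X^Tm(X\hat\beta)+Qv)$; subtracting gives $\mathbb{E}g(Z)=\Sigma^{-1}X^T\big(m(X\beta)-m(X\hat\beta)\big)$. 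Setting $\alpha=\beta-\hat\beta$ and integrating each coordinate along $\hat\beta+t\alpha$, the fundamental theorem of calculus gives $m(X\beta)-m(X\hat\beta)=\int_0^1\Lambda(\hat\beta+t\alpha)\,X\alpha\,\df t$, where one checks that $m_i'(\mu)$ is exactly the $i$th diagonal entry of $\Lambda$ — indeed each equals the variance of the corresponding truncated normal, which lies strictly in $(0,1)$. Minkowski's integral inequality in the $\Sigma$-norm then yields $\|\mathbb{E}g(Z)\|\le\sup_{t\in(0,1)}\|\Sigma^{-1}X^T\Lambda(\hat\beta+t\alpha)X\alpha\|\le\sqrt\lambda\,\|\alpha\|$ by the very definition of $\lambda$, whence $\|\mathbb{E}g(Z)\|^2\le\lambda\|\beta-\hat\beta\|^2$. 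Assembling the pieces gives $\mathbb{E}[\|\tilde\beta-\hat\beta\|^2\mid\beta]\le\lambda\|\beta-\hat\beta\|^2+2p$, i.e.\ the claimed drift inequality.

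The genuinely delicate step is showing $\lambda<1$. Substituting $w=\Sigma^{1/2}\alpha$ turns the Rayleigh quotient into $\lambda=\sup_{t}\lambda_{\max}\big(\Sigma^{-1/2}X^T\Lambda(\hat\beta+t\alpha)X\Sigma^{-1/2}\big)^2$, so it suffices to bound $\lambda_{\max}(\Sigma^{-1/2}X^T\Lambda(\beta)X\Sigma^{-1/2})$ uniformly below $1$ over $\beta\in\mathbb{R}^p$. Because every diagonal entry of $\Lambda(\beta)$ lies strictly in $(0,1)$, we have $X^T\Lambda(\beta)X\prec X^TX\preceq\Sigma$, which gives $\lambda_{\max}<1$ at each fixed $\beta$; when $Q\succ0$ this is even uniform, since $\Sigma^{-1/2}X^T\Lambda X\Sigma^{-1/2}\preceq I_p-\Sigma^{-1/2}Q\Sigma^{-1/2}$. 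I expect the main obstacle to be the flat-prior case $Q=0$, in which $\Lambda_{ii}(\beta)\to1$ as $(2y_i-1)x_i^T\beta\to+\infty$, so the pointwise gap need not persist as $\|\beta\|\to\infty$. I would close this with a compactness-at-infinity argument: along any ray $\beta=r\delta$ the diagonal of $\Lambda$ converges to a matrix with entries $1$ where $(2y_i-1)x_i^T\delta>0$ and $0$ where $(2y_i-1)x_i^T\delta<0$, and $\lambda_{\max}\to1$ along such a ray can be shown to force a separation/degeneracy condition on the data that the standing assumption of posterior propriety excludes. Continuity of $\beta\mapsto\lambda_{\max}$ together with this control at infinity then gives $\sup_\beta\lambda_{\max}<1$, and hence $\lambda<1$.
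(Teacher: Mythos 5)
Your derivation of the drift inequality itself is correct, and it is essentially the argument underlying the cited result of Q\&H (the paper does not reprove this lemma; it imports it from \cite{qin2019convergence}): the bias--variance decomposition in the $\Sigma$-norm produces two trace terms of size at most $p$ each (hence the additive constant $2$ after dividing by $p$); the identity $\hat{\beta} = \Sigma^{-1}\left(X^T m(X\hat{\beta}) + Qv\right)$ follows from stationarity of the log-posterior at its mode; and the fundamental theorem of calculus together with the fact that $m_i'(\mu)$ equals the variance of the corresponding truncated normal, i.e.\ the $i$th diagonal entry of $\Lambda$, gives exactly the factor $\sqrt{\lambda}$ via Minkowski's inequality. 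That part is complete, as is your uniform bound in the Gaussian-prior case via $\Sigma^{-1/2}X^T\Lambda(\beta) X \Sigma^{-1/2} \preccurlyeq I_p - \Sigma^{-1/2}Q\Sigma^{-1/2}$.

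The genuine gap is in the flat-prior case $Q=0$, and it is not a repairable technicality: your reduction of $\lambda < 1$ to the statement $\sup_{\beta \in \mathbb{R}^p} \lambda_{\max}\left(\Sigma^{-1/2}X^T\Lambda(\beta)X\Sigma^{-1/2}\right) < 1$ fails because that statement is false under posterior propriety. Counterexample: $p=2$, $n=3$, $x_1 = (1,1)^T$, $x_2 = (1,-1)^T$, $x_3 = (-1,0)^T$, $y_1=y_2=y_3=1$. No nonzero $\delta$ satisfies $x_i^T\delta \geq 0$ for all $i$, so the flat-prior posterior is proper; yet along $\beta_r = r(1,0)^T$ one has $\Lambda(\beta_r) \to \mathrm{diag}(1,1,0)$, so $X^T\Lambda(\beta_r)X \to \mathrm{diag}(2,2)$ while $\Sigma = \mathrm{diag}(3,2)$, giving $\Sigma^{-1/2}X^T\Lambda(\beta_r)X\Sigma^{-1/2} \to \mathrm{diag}(2/3,\,1)$, so the supremum of $\lambda_{\max}$ equals $1$. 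In particular, your claimed implication ``$\lambda_{\max}\to 1$ along a ray forces a separation condition excluded by propriety'' is false: $\lambda_{\max}\to 1$ only requires a vector (here $v=(0,1)^T$) orthogonal to the rows whose $\Lambda$-entries vanish in the limit, and propriety does not preclude that. What saves the actual lemma is precisely the coupling your reduction discards: in the definition of $\lambda$, the matrix $\Lambda$ is evaluated at $\hat{\beta}+t\alpha$ and applied to $X\alpha$ with the \emph{same} $\alpha$. In the example, along $\delta = (1,0)^T$ the tied quotient involves $\Lambda_\infty X\delta = (1,1,0)^T$ and evaluates to $4/9 < 1$, because non-separation forces some coordinate of $X\delta$ with $(2y_i-1)x_i^T\delta<0$ to be annihilated in the numerator while still contributing to $\|\delta\|^2 = \|X\delta\|_2^2$ in the denominator. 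Any correct proof of $\lambda<1$ --- which is where the real work in Q\&H lies --- must keep the test direction equal to the direction of escape and then handle the supremum by a joint compactness argument over directions and radii; the untied eigenvalue bound is genuinely weaker and breaks.
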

	
	With this drift condition in hand, we can now employ
	Proposition~\ref{pro:general} to get an upper bound on the
	Wasserstein distance to stationarity.  
	Here is the main result of this
	subsection.
	
	\begin{proposition} 
		\label{pro:acwasserbound}
		The A\&C Markov chain converges geometrically in
		$\mbox{W}_{\psi}$ (and thus in $\mbox{W}_{\psi_2}$ as well).
		More specifically, for a chain starting at $\beta \in
		\mathbb{R}^p$, the following holds for each $m \ge 0$ and $a \in ( \log 5 / (\log 5 - \log \gamma), 1 )$:
		\begin{equation} \nonumber
			W_{\psi}(K_{\beta}^m,\Pi) \leq 2p \left( \frac{(\lambda + 1)V(\beta) + 3 }{1-\rho_a} \right) \rho_a^m \,,
		\end{equation}
		where
		\[
		\rho_a = \left(  5^{1-a} \gamma^a \right) \vee \left( \frac{\lambda d + 5}{d + 1} \right)^{1-a} < 1 \,,
		\]
		\[
		\gamma = \sup_{\{ \beta: \|\beta-\hat{\beta}\|^2 \leq d \}}
		\mathbb{E} \lambda_{\max}\left( \Sigma^{-1/2} X^T
		\,S(\beta,U)\, X \Sigma^{-1/2} \right) < 1 \,,
		\]
		$\lambda$ is given in Lemma~\ref{lem:drift}, and $d >
		4/(1-\lambda) \;$.
	\end{proposition}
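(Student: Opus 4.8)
The plan is to apply Proposition~\ref{pro:general} with the drift function $V(\beta) = \|\beta-\hat{\beta}\|^2/p$, the random mapping~$f$ constructed in Subsection~\ref{ssec:prel}, and the norm-induced distance~$\psi$, choosing the constants so that they collapse to the stated form. First I would verify $(A1')$. The drift half is exactly Lemma~\ref{lem:drift}, which supplies $\lambda < 1$ and $L = 2$; this is why $2L+1 = 5$ appears throughout the conclusion. For the metric-compatibility half I need a constant~$c$ with $c^{-1}\psi(\beta,\beta') \le V(\beta)+V(\beta')+1$. Taking $c = 2p$ and using $\|\beta-\beta'\| \le \|\beta-\hat{\beta}\| + \|\beta'-\hat{\beta}\|$, it suffices to check the scalar inequality $s^2/p + 1/2 \ge s/(2p)$ for $s \ge 0$, which holds because the quadratic $s^2 - s/2 + p/2$ has negative discriminant for every $p \ge 1$. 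This fixes $c = 2p$, matching the leading factor~$2p$ in $c(\beta)$ and the term $L+1 = 3$.

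Next I would establish $(A2')$. By Lemma~\ref{lem:acdiff}, the expected length of the segment derivative is controlled by $\mathbb{E}\,\lambda_{\max}[\Sigma^{-1/2}X^T S(\gamma_t,U)X\Sigma^{-1/2}]$ along $\gamma_t = \beta+t(\beta'-\beta)$; this is the quantity that feeds into the contraction condition via Lemma~\ref{lem:diffcontract}. Off the small set I would take $\gamma_0 = 1$: Lemma~\ref{lem:lambda<1} gives $\lambda_{\max}(\cdots) < 1$ pointwise in $(\beta,U)$, so the coefficient never exceeds~$1$. Since $\gamma_0 = 1$, condition $(A3')$ holds automatically (the $\gamma_0 \le 1$ branch), the upper endpoint for~$a$ collapses to~$1$, and the second term of $\rho_a$ loses its $\gamma_0^a$ factor, leaving $((\lambda d+5)/(d+1))^{1-a}$; this term lies below~$1$ precisely when $d > 4/(1-\lambda)$, which is the stated constraint on~$d$. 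On the small set $C = \{(\beta,\beta'): V(\beta)+V(\beta') \le d\}$, convexity of $\beta\mapsto\|\beta-\hat{\beta}\|^2$ keeps the entire segment inside the sublevel set $\{\beta: V(\beta) \le d\}$, so the contraction coefficient there is at most $\gamma := \sup\,\mathbb{E}\,\lambda_{\max}(\cdots)$ over that sublevel set.

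The crux is showing $\gamma < 1$ rather than merely $\gamma \le 1$. Lemma~\ref{lem:lambda<1} only yields a strict inequality pointwise, so I would first note that for each fixed~$\beta$ the bounded random variable $\lambda_{\max}(\Sigma^{-1/2}X^T S(\beta,U)X\Sigma^{-1/2})$ is $< 1$ almost surely, whence its expectation is strictly below~$1$; I would then upgrade this to a uniform bound by arguing that $\beta \mapsto \mathbb{E}\,\lambda_{\max}(\cdots)$ is continuous and the sublevel set is compact, so the supremum is attained and is $< 1$. Working with the expectation rather than the pointwise bound is essential here---averaging over~$U$ is exactly what pushes the coefficient strictly below the pointwise bound of~$1$---and this is why the argument routes through the expectation form of the contraction condition from Lemma~\ref{lem:diffcontract} instead of a pointwise Lipschitz estimate. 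I expect this continuity-plus-compactness step, together with correctly tracking the region swept out by the segments, to be the only genuinely delicate part.

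Finally I would substitute $L = 2$, $c = 2p$, and $\gamma_0 = 1$ into the conclusion of Proposition~\ref{pro:general}: the rate becomes $\rho_a = (5^{1-a}\gamma^a)\vee((\lambda d+5)/(d+1))^{1-a}$ over the admissible range $\log 5/(\log 5 - \log\gamma) < a < 1$, and the prefactor becomes $2p\,[(\lambda+1)V(\beta)+3]/(1-\rho_a) = c(\beta)$, giving geometric convergence in $\mbox{W}_{\psi}$. Convergence in $\mbox{W}_{\psi_2}$ then follows from the norm equivalence recorded earlier in this section (equivalently, from the fact that $\rho_*(\psi) = \rho_*(\psi_2)$).
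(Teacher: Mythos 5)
Your proposal is correct and takes essentially the same route as the paper's own proof: both apply Proposition~\ref{pro:general} with Lemma~\ref{lem:drift} and $c = 2p$ for $(A1')$, take $\gamma_0 = 1$ via Lemmas~\ref{lem:acdiff} and~\ref{lem:lambda<1} so that $(A3')$ is automatic, and obtain $\gamma < 1$ on the small set by combining the pointwise strict bound of Lemma~\ref{lem:lambda<1} with continuity in $\beta$ and compactness (plus convexity of the sublevel set to keep segments inside it). The only immaterial variation is the order of operations in that last step: the paper bounds $\sup_{\beta}\mathbb{E}\,\lambda_{\max}(\cdot) \leq \mathbb{E}\sup_{\beta}\lambda_{\max}(\cdot) < 1$, whereas you argue that $\beta \mapsto \mathbb{E}\,\lambda_{\max}(\cdot)$ is continuous and attains its supremum; both are valid.
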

	
	The proof of Proposition~\ref{pro:acwasserbound} hinges on Lemma~\ref{lem:drift}, which establishes $(A1')$, and Lemmas~\ref{lem:acdiff} and~\ref{lem:lambda<1}, which establish $(A2')$ and $(A3')$.
	The details are provided in Section~\ref{app:ac} in the~Appendix.

	Of course, Proposition~\ref{pro:acwasserbound} gives a non-trivial
	upper bound on $\rho_*(\psi)$, namely
	\[
	\hat{\rho}_0 = \left(  5^{1-a} \gamma^a \right) \vee \left( \frac{\lambda d + 5}{d + 1} \right)^{1-a} \;,
	\]
	where~$\gamma$,~$\lambda$,~$d$, and~$a$ are given in the said
	proposition.
	
	\subsection{Convergence complexity analysis}
	\label{ssec:ConvCompAnal}
	
	In the previous subsection, we studied the A\&C Markov chain
	associated with an arbitrary fixed data set ($X$ and $y$).  Posterior
	propriety was simply assumed, which is reasonable because there is a
	well-known set of easily checked conditions that are necessary and
	sufficient for posterior propriety \citep{chen2000propriety}.  In the
	current subsection, we will consider the asymptotic behavior of the
	convergence rates of an infinite sequence of A\&C Markov chains
	corresponding to an infinite sequence of data sets (that are growing
	in size).  As we shall see, some of the data sets early in the
	sequence may have design matrices, $X$, that are rank deficient.  Of
	course, if $Q=0$ and $X$ is rank deficient, then not only is the
	corresponding posterior improper, but the A\&C algorithm is not even
	defined (since $\Sigma$ is singular).  Fortunately, all we require for
	our asymptotic analysis is that, \textit{eventually}, $\Sigma$ is
	non-singular and the posterior is proper.  We now state a version of
	Corollary~\ref{cor:classical} that is specific to the A\&C Markov
	chain.
	
	\begin{proposition}
		\label{pro:acboundsimple}
		Suppose that $\Sigma$ is non-singular, and suppose further that
		\[
		\hat{\rho}_1 := \sup_{\beta \in \mathbb{R}^p} \mathbb{E}
		\lambda_{\max}\left( \Sigma^{-1/2} X^T \,S(\beta,U)\, X \Sigma^{-1/2}
		\right) < 1 \;.
		\]
		Then the posterior is proper, and for each $\beta \in \mathbb{R}^p$
		and $m \geq 0$,
		\[
		\mbox{W}_{\psi}(K_{\beta}^m, \Pi) \leq
		\frac{c(\beta)}{1-\hat{\rho}_1} \hat{\rho}_1^m \;,
		\]
		where $c(\beta) = \int_{\mathbb{R}^p} \|\beta-\beta'\|
		K(\beta, \df \beta') < \infty \;.$
	\end{proposition}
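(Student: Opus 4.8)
The plan is to derive the stated bound from Corollary~\ref{cor:classical}, applied with $\gamma = \hat{\rho}_1$. To do so I must verify three things: that $c(\beta) < \infty$ for each $\beta$; that the global contraction~\eqref{ine:globalcontract} holds with $\gamma = \hat{\rho}_1$; and---the only genuinely new ingredient, since Corollary~\ref{cor:classical} presupposes a stationary $\Pi$---that a stationary probability measure $\Pi$ exists and coincides with the (a priori possibly improper) posterior. The first two are essentially bookkeeping; the third is the heart of the matter.

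For finiteness of $c(\beta)$, write $c(\beta) = \mathbb{E}\|\beta - f(\beta)\|$ with $f$ the random mapping from Subsection~\ref{ssec:prel}. Each summand $\Sigma^{-1} x_i H(U_i, x_i^T\beta, y_i)$ is a deterministic linear image of a truncated-normal variable, which has finite mean, and $\Sigma^{-1/2}N$ is Gaussian; since $\Sigma$ is nonsingular (hence positive definite) the norm $\|\cdot\|$ is equivalent to the Euclidean norm, so $c(\beta) \leq \|\beta\| + \mathbb{E}\|f(\beta)\| < \infty$. For the contraction, Lemma~\ref{lem:acdiff} gives, for every $\beta,\beta'$ and $t \in [0,1]$,
\[
\mathbb{E}\left\| \frac{\df f(\beta + t(\beta'-\beta))}{\df t} \right\| \leq \mathbb{E}\,\lambda_{\max}\!\left[ \Sigma^{-1/2} X^T S(\beta + t(\beta'-\beta), U) X \Sigma^{-1/2} \right] \|\beta'-\beta\| \,.
\]
Because $\beta + t(\beta'-\beta)$ is a point of $\mathbb{R}^p$, the expectation on the right is at most $\sup_{\tilde{\beta} \in \mathbb{R}^p} \mathbb{E}\,\lambda_{\max}(\Sigma^{-1/2} X^T S(\tilde{\beta}, U) X \Sigma^{-1/2}) = \hat{\rho}_1$. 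Taking the supremum over $t$ yields exactly~\eqref{ine:globalcontract} with $\gamma = \hat{\rho}_1 < 1$.

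The crux is the existence and identification of $\Pi$. Iterating the contraction through the random-mapping coupling (conditioning step by step via the tower property) gives $W_{\psi}(K_\beta^m, K_{\beta'}^m) \leq \hat{\rho}_1^m \|\beta-\beta'\|$ for all $\beta,\beta'$; combining this with the joint convexity of $W_\psi$ and conditioning on the first step gives $W_\psi(K_\beta^{m+1}, K_\beta^m) \leq \hat{\rho}_1^m c(\beta)$. Hence $\{K_\beta^m\}_{m \geq 0}$ is Cauchy in the complete metric space of finite-first-moment probability measures under $W_\psi$, and converges to a proper limit $\Pi$ that is invariant for $K$. To identify $\Pi$ as the normalized posterior, I would use that the A\&C transition density $k(\beta,\beta')$ is strictly positive and satisfies detailed balance with respect to the unnormalized posterior density $\pi(\cdot|X,y)$---a property that holds whether or not $\pi$ is integrable, since $k$ is assembled from the two proper full conditionals and $\Sigma$ is nonsingular. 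Strict positivity makes the chain irreducible with respect to Lebesgue measure, and the finite invariant measure $\Pi$ then makes it positive (Harris) recurrent. For a recurrent chain the invariant measure is unique up to a multiplicative constant; since detailed balance shows $\pi(\cdot|X,y)$ is itself an invariant measure, we conclude $\Pi \propto \pi(\cdot|X,y)$. This forces $\int_{\mathbb{R}^p} \pi(\beta|X,y)\,\df\beta < \infty$, i.e.\ posterior propriety, and identifies $\Pi$ with the posterior.

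With propriety in hand the chain is Harris ergodic (as recorded in Subsection~\ref{ssec:prel}), so every hypothesis of Corollary~\ref{cor:classical} is met with $\gamma = \hat{\rho}_1$, and the bound $W_\psi(K_\beta^m, \Pi) \leq \frac{c(\beta)}{1-\hat{\rho}_1}\hat{\rho}_1^m$ follows at once. I expect the main obstacle to lie entirely in the third step, and specifically in identifying the Wasserstein limit with the posterior: existence of the limit is a soft consequence of contraction, but \emph{deducing propriety from contraction}---ruling out convergence to anything other than a proper version of the target---requires the recurrence and uniqueness-of-invariant-measure argument above, with due care about Harris (as opposed to merely topological) recurrence when invoking uniqueness.
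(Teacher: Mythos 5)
Your proposal is correct, and it shares the paper's skeleton---reduce everything to showing that $\hat{\rho}_1<1$ implies posterior propriety, then invoke Lemma~\ref{lem:acdiff} and Corollary~\ref{cor:classical}---but the mechanism you use for the propriety step is genuinely different. The paper turns the contraction into a Meyn--Tweedie geometric drift condition centered at an arbitrary point $\alpha$: by the triangle inequality,
\[
\int_{\mathbb{R}^p}\psi(\beta',\alpha)\,K(\beta,\df\beta') = \mathbb{E}\|f(\beta)-\alpha\| \leq \hat{\rho}_1\,\psi(\beta,\alpha)+c(\alpha)\,,
\]
notes that $\tilde{V}(\cdot)=\psi(\cdot,\alpha)+1$ is unbounded off petite sets, and cites Theorem~15.2.8 of \citet{meyn2012markov} to conclude geometric ergodicity, hence existence of an invariant probability measure; propriety then follows because the $\sigma$-finite measure defined by the \textit{unnormalized} posterior is invariant (reversibility holds without propriety) and invariant measures of a recurrent chain are unique up to constant multiples. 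You instead manufacture the invariant probability measure directly, via the Banach-fixed-point argument in the complete Wasserstein-$1$ space, and then carry out the same identification step explicitly (detailed balance without propriety, Lebesgue-irreducibility from positivity of $k$, recurrence, uniqueness up to scaling). What the paper's route buys is brevity and, as a by-product, geometric ergodicity in total variation, at the price of needing the sublevel sets of $\tilde{V}$ to be petite (asserted, not proved); your route avoids petite sets entirely but must supply completeness of the Wasserstein space, propagation of finite first moments along the iterates, and the uniqueness theorem for invariant measures. One small correction: irreducibility plus a finite invariant measure yields positive recurrence but not automatically positive \emph{Harris} recurrence, so your parenthetical ``(Harris)'' overstates what is immediate; fortunately, uniqueness of the invariant measure up to constant multiples requires only recurrence \citep[][Theorem~10.4.4]{meyn2012markov}, so your argument goes through unchanged---as your own closing caveat anticipates.
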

	\begin{proof}
		It suffices to prove that 
		$\hat{\rho}_1 < 1$
		implies posterior
		propriety.  The rest follows immediately from Lemma~\ref{lem:acdiff} and
		Corollary~\ref{cor:classical}.   \textit{Without assuming
			posterior propriety}, it can be checked that $c(\alpha) < \infty$
		for all $\alpha \in \mathbb{R}^p$.  Now fix $\alpha \in
		\mathbb{R}^p$.  By Lemmas~\ref{lem:diffcontract} and~\ref{lem:acdiff},
		for each $\beta \in \mathbb{R}^p$,
		\[
		\begin{aligned}
			\int_{\mathbb{R}^p} \psi(\beta',\alpha) K(\beta,\df \beta') &= \mathbb{E} \|f(\beta) - \alpha \| \\
			&\leq \mathbb{E}\|f(\beta) - f(\alpha)\| + \mathbb{E}\|f(\alpha) - \alpha\| \\
			& \leq \hat{\rho}_1 \psi(\beta,\alpha) + c(\alpha) \;.
		\end{aligned}
		\]
		Now, $\hat{\rho}_1 < 1$, $c(\alpha) < \infty$, and the function
		$\tilde{V}(\cdot) = \psi(\cdot,\alpha)+1$ is unbounded off petite
		sets.  Therefore, the inequality above constitutes a geometric drift
		condition.  It follows that the Markov chain is geometrically
		ergodic \citep[][Theorem 15.2.8]{meyn2012markov}, and this in turn
		implies that its invariant measure (defined by the
		\textit{unnormalized} posterior density) has finite mass.
	\end{proof}

	$\hat{\rho}_1$ is essentially a simpler version of $\hat{\rho}_0$, defined in the previous subsection.
	It serves as an upper bound on $\rho_*(\psi)$.
	Studying the asymptotic behavior of $\hat{\rho}_1$ allows one to explore the convergence complexity of the A\&C chain.
	We consider three asymptotic regimes.

	\subsubsection{Letting $n$ diverge while letting~$p$ be fixed but arbitrarily large}
	\label{sssec:largen}
	
	Consider an array of data sets $\{\mathcal{D}_{p,n}\}_{p,n}$, where
	$\mathcal{D}_{p,n}$ consists of a design matrix $X_{n \times p}$ and a
	vector of binary responses $y_{n \times 1}$.  As in Section 5 of
	Q\&H, we assume that these data sets are
	generated according to a random mechanism that is consistent with the
	probit regression model.  Indeed, for each fixed~$p$, suppose that
	$\{\mathcal{D}_{p,n}\}_n$ is generated sequentially as follows.  Let
	$\{x_i\}_{i=1}^{\infty}$ be a sequence of $p$-dimensional vectors that
	are independently generated according to some common distribution.
	Given $\{x_i\}_{i=1}^{\infty}$, generate $\{y_i\}_{i=1}^{\infty}$
	independently according to $y_i \sim
	\mbox{Bernoulli}\left(G(x_i^T\beta_*)\right)$, where $G: \mathbb{R}
	\to (0,1)$ is an inverse link function that is independent of~$p$, and
	$\beta_* \in \mathbb{R}^p$ is the true coefficient.  Set the
	$i$th row of $X_{n \times p}$ to be $x_i^T$, and set the $i$th element
	of $y_{n \times 1}$ to be $y_i \,$.  Thus, when~$p$ is fixed,
	whenever~$n$ is increased by~$1$, a new $p \times 1$ covariate vector
	and a corresponding binary response are added to the data set.  Of
	course, it is assumed that these random data sets are completely
	unrelated to the random vectors used to construct the random mappings
	described in Subsection~\ref{ssec:prel}.  To make this clear, we use
	$\tilde{\mathbb{P}}$ and $\tilde{\mathbb{E}}$ to denote probabilities
	and expectations concerning the random data sets.
	
	Now, for each~$p$, assign a prior with parameters $Q = Q_p$ and $v =
	v_p$.  Then, for every given~$p$, we have a sequence of posterior
	distributions and a corresponding sequence of A\&C chains indexed
	by~$n$.  Q\&H used d\&m to prove that, for
	every~$p$, under mild conditions, $\limsup_{n \to \infty}
	\rho_*^{\mbox{{\tiny TV}}} \leq \rho_p$ for some $\rho_p < 1$, almost
	surely.  Hence, for any fixed~$p$, the convergence rate of the A\&C
	chain is bounded below~$1$ as~$n$ grows.  However, similar to what
	happens in Proposition~\ref{pro:gaussian-rosen}, the resultant asymptotic
	upper bounds $\rho_p$ converge to~$1$ exponentially fast as $p \to
	\infty$.  In what follows, we use the Wasserstein-to-TV approach to
	improve their result by providing an upper bound on $\limsup_{n \to
		\infty} \rho_*^{\mbox{{\tiny TV}}}$ that, under appropriate
	conditions, does not depend on~$p$.
	
	We now state our assumptions concerning the random data sets.
	\begin{enumerate}
		\item [$(B1)$] For each~$p$, $0 < \lambda_{\min} (
		\tilde{\mathbb{E}} x_1x_1^T) \leq \lambda_{\max}(
		\tilde{\mathbb{E}} x_1x_1^T) < \infty$.
		\item [$(B2)$] For each~$p$, $ \tilde{\mathbb{E}} \|x_1\|_2^4 <
		\infty$.
		\item [$(B3)$] For each~$p$, $\lim_{\delta \to 0^+} \sup_{\alpha \in
			\mathbb{R}^p \setminus \{0\}} \tilde{\mathbb{P}} (|x_1^T\alpha|
		< \delta \|x_1\|_2 \|\alpha\|_2) = 0$.
		\item [$(B4)$] There exists a constant $\ell > 0$ (that does not depend
		on~$p$) such that for each~$p$,
		\[
		\lambda_{\min} \left[ ( \tilde{\mathbb{E}} x_1x_1^T)^{-1/2} (
		\tilde{\mathbb{E}} x_1x_1^T \bar{G}(x_1^T\beta_*)) (
		\tilde{\mathbb{E}} x_1x_1^T)^{-1/2} \right] > \ell \;,
		\]
		where $\bar{G}(x) = G(x) \wedge (1-G(x))$ for $x \in \mathbb{R}$.
	\end{enumerate}
	
	\noindent Assumptions $(B1)$ and $(B2)$ are standard. $(B3)$ ensures
	that the distribution of $x_1$ is not overly concentrated on any
	$(p-1)$-dimensional subspace (that passes through the origin).  Note
	that this condition is satisfied by most common spherically symmetric
	distributions, e.g., $\mbox{N}(0, I_p)$.  If~$G$ is a monotone
	function such that $G(0) = 1/2$ and $G(-\mu) = 1-G(\mu)$ for all $\mu
	\in \mathbb{R}$, then $\bar{G}(x_1^T\beta_*)$ is decreasing in
	$|x_1^T\beta_*|$, and $(B4)$ serves as a sparsity condition
	on~$\beta_*$.  For illustration, suppose that $x_1 \sim \mbox{N}(0,
	I_p)$, and that only the first $k, \, k < p$, elements of $\beta_* \in
	\mathbb{R}^p$ are non-vanishing.  Denote the first~$k$ elements of
	$\beta_*$ by $\beta_{(k)} \in \mathbb{R}^k$, and the first~$k$
	elements of $x_1$ by $x_{(k)} \in \mathbb{R}^k$. Then $x_{(k)} \sim
	\mbox{N}(0, I_k)$, and
	\[
	\begin{aligned}
		&( \tilde{\mathbb{E}} x_1x_1^T)^{-1/2} ( \tilde{\mathbb{E}} x_1x_1^T
		\bar{G}(x_1^T\beta_*)) ( \tilde{\mathbb{E}} x_1x_1^T)^{-1/2} \\
		&=
		\mbox{Diag} \left( \tilde{\mathbb{E}} x_{(k)} x_{(k)}^T \bar{G}
		\left(x_{(k)}^T \beta_{(k)} \right), \, \tilde{\mathbb{E}} \bar{G}
		\left(x_{(k)}^T \beta_{(k)} \right) I_{p-k} \right) .
	\end{aligned}
	\]
	This shows that
	\begin{equation} \label{eq:sparsity1}
		\begin{aligned}
			&\lambda_{\min} \left[ ( \tilde{\mathbb{E}} x_1x_1^T)^{-1/2} (
			\tilde{\mathbb{E}} x_1x_1^T \bar{G}(x_1^T\beta_*)) (
			\tilde{\mathbb{E}} x_1x_1^T)^{-1/2} \right]\\ & = \min \left\{
			\lambda_{\min}\left[ \tilde{\mathbb{E}} x_{(k)} x_{(k)}^T \bar{G}
			\left(x_{(k)}^T \beta_{(k)} \right) \right], \tilde{\mathbb{E}}
			\bar{G} \left(x_{(k)}^T \beta_{(k)} \right) \right\} \;.
		\end{aligned}
	\end{equation}
	One can show that $\tilde{\mathbb{E}} x_{(k)} x_{(k)}^T \bar{G}
	(x_{(k)}^T \beta_{(k)} )$ is positive definite.  If $\beta_{(k)}$ is
	fixed for all~$p$, then the right-hand-side of~\eqref{eq:sparsity1} is
	a positive constant, and $(B4)$ is satisfied.
	
	\begin{proposition}
		\label{pro:largen}
		Suppose that $(B1) - (B4)$ hold. Then there exists a constant $\rho
		< 1$ (independent of~$p$) such that, for each~$p$,
		almost surely,
		\[
		\limsup_{n \to \infty} \rho_*(\psi) \leq \rho \;.
		\]
		By Proposition~\ref{pro:tvbound}, the result continues to hold if
		$\rho_*(\psi)$ is replaced by $\rho_*^{\mbox{{\tiny TV}}}$.
	\end{proposition}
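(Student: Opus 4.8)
The plan is to start from the bound $\rho_*(\psi)\le\hat{\rho}_1$ provided by Proposition~\ref{pro:acboundsimple} and to show that $\limsup_{n\to\infty}\hat{\rho}_1\le\rho$ $\tilde{\mathbb{P}}$-almost surely for a constant $\rho<1$ independent of~$p$; the statement for $\rho_*^{\mbox{{\tiny TV}}}$ then follows from Proposition~\ref{pro:tvbound}. Because $\Sigma/n\to A_p:=\tilde{\mathbb{E}}x_1x_1^T$ almost surely by the SLLN, and $A_p\succ0$ by $(B1)$, the matrix $\Sigma$ is eventually nonsingular; thus only the tail of the sequence is relevant, and posterior propriety comes for free once $\hat{\rho}_1<1$ is established. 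The first step is to discard the error term in Lemma~\ref{lem:approx}: from $\hat{\rho}_1\le\sup_\beta\hat{\gamma}(\beta)+\sigma(2\log p)^{1/2}$ and the estimate $\sigma^2\le\max_{i\le n}\|x_i\|_2^2/\lambda_{\min}(\Sigma)$ of Remark~\ref{rem:error}, condition $(B2)$ gives $\max_{i\le n}\|x_i\|_2^2=o(\sqrt{n})$ a.s.\ (Borel--Cantelli, using $\tilde{\mathbb{E}}\|x_1\|_2^4<\infty$), while $\lambda_{\min}(\Sigma)$ is of order~$n$; hence $\sigma\to0$ and, $p$ being fixed, $\sigma(2\log p)^{1/2}\to0$.

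It remains to bound $\sup_\beta\hat{\gamma}(\beta)$, and the idea is to study the complementary eigenvalue. Since $\Sigma=X^TX+Q$,
\[
1-\hat{\gamma}(\beta)=\lambda_{\min}\!\left(\Sigma^{-1/2}\Big(Q+\sum_{i=1}^n\delta_i(\beta)\,x_ix_i^T\Big)\Sigma^{-1/2}\right),\qquad \delta_i(\beta):=1-\big(\mathbb{E}S(\beta,U)\big)_{ii}.
\]
A direct computation shows that $\big(\mathbb{E}S(\beta,U)\big)_{ii}$ equals the variance of $\mathrm{TN}(x_i^T\beta,1;y_i)$ (so in fact $\mathbb{E}S(\beta,U)=\Lambda(\beta)$ from Lemma~\ref{lem:drift}), whence $\delta_i(\beta)\in(0,1)$. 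The crucial point is a $\beta$-free lower bound in conditional expectation. Writing $\delta(\mu,a)=1-\mathrm{Var}(\mathrm{TN}(\mu,1;a))$ and using the reflection identity $\delta(\mu,0)=\delta(-\mu,1)$, one checks that $c_0:=\inf_\mu[\delta(\mu,1)+\delta(\mu,0)]>0$ (each summand is positive and the sum tends to~$1$ as $\mu\to\pm\infty$). Since $y_i\sim\mathrm{Bernoulli}(G(x_i^T\beta_*))$,
\[
\tilde{\mathbb{E}}\big[\delta_i(\beta)\mid x_i\big]=G(x_i^T\beta_*)\delta(x_i^T\beta,1)+\big(1-G(x_i^T\beta_*)\big)\delta(x_i^T\beta,0)\ge c_0\,\bar{G}(x_i^T\beta_*),
\]
and the right-hand side does not depend on~$\beta$.

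With this in hand the endgame is a law of large numbers: the SLLN gives $n^{-1}\sum_i\bar{G}(x_i^T\beta_*)x_ix_i^T\to\tilde{\mathbb{E}}[\bar{G}(x_1^T\beta_*)x_1x_1^T]$ and $n^{-1}\Sigma\to A_p$ a.s., and $(B4)$ states exactly that $A_p^{-1/2}\tilde{\mathbb{E}}[\bar{G}(x_1^T\beta_*)x_1x_1^T]A_p^{-1/2}\succeq\ell I$ with $\ell>0$ independent of~$p$. Combining these would yield $\liminf_n\inf_\beta(1-\hat{\gamma}(\beta))\ge c_0\ell$, i.e.\ $\limsup_n\sup_\beta\hat{\gamma}(\beta)\le1-c_0\ell=:\rho<1$, with $\rho$ free of~$p$. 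The main obstacle is that the quantity to be controlled, $n^{-1}\sum_i\delta_i(\beta)x_ix_i^T$, depends on the \emph{realized} responses and must be handled \emph{uniformly} over $\beta\in\mathbb{R}^p$; replacing it by $n^{-1}\sum_i\tilde{\mathbb{E}}[\delta_i(\beta)\mid x_i]x_ix_i^T$ requires a uniform (in $\beta$, and in the direction $v$ defining the quadratic form) strong law for the centered array $\{(\delta(x_i^T\beta,y_i)-\tilde{\mathbb{E}}[\delta(x_i^T\beta,y_i)\mid x_i])(v^Tx_i)^2\}$. I expect this to follow from a Glivenko--Cantelli argument: each $\delta(\cdot,a)$ is bounded and monotone, so $\beta\mapsto\delta(x^T\beta,a)$ ranges over a VC-subgraph class, and $(B1)$--$(B3)$ together with the fourth-moment control in $(B2)$ supply the needed integrability and nondegeneracy. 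Making this uniform passage rigorous, and pinning down the universal constant $c_0$, is the technical heart of the argument.
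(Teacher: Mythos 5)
Your setup and several of your intermediate claims are correct and match the paper's skeleton: you start from $\rho_*(\psi)\le\hat{\rho}_1$ (Proposition~\ref{pro:acboundsimple}), you dispose of the error term $\sigma(2\log p)^{1/2}$ from Lemma~\ref{lem:approx} (your route via $\sigma^2\le\max_{i\le n}\|x_i\|_2^2/\lambda_{\min}(\Sigma)$ and Borel--Cantelli is different from the paper's trace bound plus the SLLN, but both work), and your identity $\mathbb{E}S(\beta,U)=\Lambda(\beta)$ together with the conditional-expectation bound $\tilde{\mathbb{E}}[\delta_i(\beta)\mid x_i]\ge c_0\,\bar{G}(x_i^T\beta_*)$ with $c_0>0$ is correct; it is a sharper analogue of the paper's step, which only uses the cruder fact from Lemma~\ref{lem:sint} that $\xi\le 1/2$ for arguments of the favorable sign and the Loewner bound $\tilde{\mathbb{E}}\,x_1x_1^T[\cdots]\succcurlyeq\tilde{\mathbb{E}}\,x_1x_1^T\bar{G}(x_1^T\beta_*)$.

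However, there is a genuine gap, and you have flagged it yourself: the passage from $n^{-1}\sum_i\delta_i(\beta)x_ix_i^T$ (which involves the realized responses) to its conditional expectation, \emph{uniformly} over $\beta\in\mathbb{R}^p$ and over directions, is asserted to follow from a Glivenko--Cantelli/VC argument but is never carried out. This is not a routine detail; it is precisely the difficulty on which the whole proposition turns, and a pointwise-in-$\beta$ SLLN is useless here because the supremum over $\beta$ of the centered process could remain of order one. The paper's proof is organized entirely around avoiding this empirical-process step: it covers $\mathbb{R}^p$ by finitely many cones $J(\alpha_j,\sqrt{1-\delta^2})$ and uses Lemma~\ref{lem:J} together with Lemma~\ref{lem:sint} to bound, for all $\beta$ in a given cone at once, the matrix $\sum_i x_i(1-\xi(x_i^T\beta,y_i))x_i^T$ below by $\frac{1}{2}\sum_i x_ix_i^T\left[1_{J(-\alpha_j,\delta)}(x_i)1_{\{1\}}(y_i)+1_{J(\alpha_j,\delta)}(x_i)1_{\{0\}}(y_i)\right]$, a random matrix that no longer depends on $\beta$; the uniform statement then reduces to an ordinary SLLN for each of the finitely many fixed $\alpha_j$, at the price of a slab term $\sum_i x_ix_i^T1_{[0,\delta\|x_i\|_2\|\alpha_j\|_2]}(|x_i^T\alpha_j|)$, which is exactly what $(B3)$ is used to annihilate as $\delta\to0$. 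Your VC route is plausible (a fixed bounded monotone $\delta(\cdot,a)$ composed with linear functions, multiplied by a finite-dimensional class of quadratics with envelope $\|x\|_2^2$ integrable under $(B2)$), and if completed it would be a genuinely different---conceivably even $(B3)$-free---proof, with constant $1-c_0\ell$ in place of the paper's $1-\ell/2$. But as written the decisive step is a conjecture: you would still need to verify the entropy and measurability conditions for the product class, prove the monotonicity of $\delta(\cdot,a)$ you invoke, and redo the eigenvalue bookkeeping (handling $Q\succcurlyeq0$ and the normalization by $\Sigma$ versus $\tilde{\mathbb{E}}x_1x_1^T$ appearing in $(B4)$) to extract a $p$-free constant.
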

	
	Proposition~\ref{pro:largen} is proved by studying the asymptotic behavior of $\hat{\rho}_1$ (defined in Proposition~\ref{pro:acboundsimple})
	as $n \to \infty$.
	Conditions $(B1) - (B3)$ allow one to construct an asymptotic upper bound on~$\hat{\rho}_1$ that only depends on the distribution of~$x_1$, the inverse link function~$G$, and the true regression coefficient~$\beta_*$.
	One then makes use of Condition $(B4)$ to show that this upper bound is bounded away from~$1$ as~$p$ varies.
	A detailed proof is provided in Section~\ref{app:ac} of the Appendix.

	\subsubsection{Arbitrary~$n$ and~$p$ with shrinkage priors}
	
	Consider another array of data sets, $\{\mathcal{D}_{p,n}\}_{p,n}$,
	where each one has a corresponding prior with parameters
	$(Q_{p,n},v_{p,n})$.  This time we make no assumptions on the
	structure of the data, and study the effect that a shrinkage prior on~$\beta$ has on
	$\rho_*(\psi)$.  For convenience, we will suppress the subscripts of
	$X_{n \times p}$, $Q_{p,n}$, and $v_{p,n}$.
	
	We study the behavior of $\rho_*(\psi)$ under the following
	assumptions.
	\begin{enumerate}
		\item[$(C1)$] For every~$p$ and~$n$, $Q$ is positive-definite.
		\item[$(C2)$] There exists a constant $\ell < \infty$ such that~$
		\lambda_{\max}( XQ^{-1}X^T ) \leq \ell$ for all~$p$ and~$n$.
	\end{enumerate}
	
	\noindent Condition $(C1)$ ensures posterior propriety in all cases.
	Condition $(C2)$ holds when the precision matrix of the Gaussian
	prior,~$Q$, has sufficiently large eigenvalues, which means that the
	prior provides a sufficiently strong shrinkage towards its mean~$v$.
	When $(C1)$ holds, $(C2)$ is equivalent to the existence of a constant $\ell' < 1$
	such that $\lambda_{\max} (X \Sigma^{-1} X^T) \leq \ell'$
	\citep{chakraborty2016convergence}.  Examples of priors that satisfy
	$(C1)$ and $(C2)$ can be found in, e.g., \citet{gupta2007variable},
	\citet{yang2009bayesian}, and \citet{baragatti2012study}.
	
	Using d\&m-based convergence rate bounds from
	Q\&H, one can show that under $(C1)$ and $(C2)$,
	for every fixed~$n$, $\limsup_{p \to \infty} \rho_*^{\mbox{{\tiny
				TV}}} \leq \rho_n$ for some $\rho_n < 1$.  However, it's easy to
	verify that these bounds converge to~$1$ as $n \to \infty$.  Taking
	the Wasserstein-to-TV approach, we have the following improved result.
	
	\begin{proposition} \label{pro:largep}
		If $(C1)$ and $(C2)$ hold, then there exists a constant $\rho < 1$
		such that, for all~$n$ and~$p$, $\rho_*(\psi) \leq \rho$.  By
		Proposition~\ref{pro:tvbound}, the result continues to hold if
		$\rho_*(\psi)$ is replaced by $\rho_*^{\mbox{{\tiny TV}}}$.
	\end{proposition}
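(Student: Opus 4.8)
The plan is to bound $\hat{\rho}_1$ uniformly in $n$ and $p$ and then invoke Proposition~\ref{pro:acboundsimple}, which guarantees $\rho_*(\psi) \le \hat{\rho}_1$ whenever $\hat{\rho}_1 < 1$. Thus it suffices to exhibit a constant $\rho < 1$, independent of $n$ and $p$, with
\[
\hat{\rho}_1 = \sup_{\beta \in \mathbb{R}^p} \mathbb{E}\, \lambda_{\max}\!\left( \Sigma^{-1/2} X^T S(\beta,U) X \Sigma^{-1/2} \right) \le \rho \,.
\]
The whole point of the argument is that the shrinkage condition $(C2)$ forces $X\Sigma^{-1}X^T$ to have largest eigenvalue bounded away from $1$ uniformly, and this crude spectral bound---usually far too weak to be useful---is precisely what is needed here.

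First I would observe that, for every realization of $U$ and every $\beta$, the diagonal matrix $S(\beta,U)$ satisfies $0 \preceq S(\beta,U) \preceq I$. Non-negative definiteness is Lemma~\ref{lem:s}, while the upper bound follows because each diagonal entry equals $s(u,\mu)=1-u\phi(\mu)/\phi[\Phi^{-1}(\Phi(\mu)u)]$ for some $u \in (0,1)$, whence the subtracted term is strictly positive and every entry lies in $[0,1)$. Writing
\[
\Sigma^{-1/2}X^T S X \Sigma^{-1/2} = \Sigma^{-1/2}X^T X \Sigma^{-1/2} - \Sigma^{-1/2}X^T(I-S) X \Sigma^{-1/2} \,,
\]
the second term is a congruence of the non-negative definite matrix $I-S$ and is therefore itself non-negative definite. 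Monotonicity of $\lambda_{\max}$ under the Loewner order then gives, pointwise in $U$ and $\beta$,
\[
\lambda_{\max}\!\left( \Sigma^{-1/2}X^T S(\beta,U) X \Sigma^{-1/2} \right) \le \lambda_{\max}\!\left( \Sigma^{-1/2}X^T X \Sigma^{-1/2} \right) = \lambda_{\max}\!\left( X\Sigma^{-1}X^T \right) \,,
\]
the last equality holding because the two matrices share their nonzero eigenvalues.

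It remains to control $\lambda_{\max}(X\Sigma^{-1}X^T)$ uniformly. Under $(C1)$ I would use the equivalence recorded in the text: with $B := XQ^{-1}X^T$, a Woodbury manipulation yields $X\Sigma^{-1}X^T = B(I+B)^{-1}$, whose largest eigenvalue is $\lambda_{\max}(B)/(1+\lambda_{\max}(B))$ since $b \mapsto b/(1+b)$ is increasing on $[0,\infty)$. Condition $(C2)$ bounds $\lambda_{\max}(B)\le \ell$ uniformly in $n$ and $p$, so $\lambda_{\max}(X\Sigma^{-1}X^T) \le \ell/(1+\ell) =: \ell' < 1$, a constant free of $n$ and $p$. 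Since the pointwise bound above holds for every $U$, taking expectations and then the supremum over $\beta$ yields $\hat{\rho}_1 \le \ell' < 1$; setting $\rho = \ell'$ completes the argument, and the total variation statement then follows from Proposition~\ref{pro:tvbound}.

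The individual steps are routine; the one place that genuinely matters is recognizing that the two-sided bound $0 \preceq S \preceq I$ collapses the whole problem onto the deterministic spectral quantity $\lambda_{\max}(X\Sigma^{-1}X^T)$, and that this quantity is controlled precisely---and only---by the shrinkage hypothesis. Indeed, for a flat prior ($Q=0$) this largest eigenvalue equals $1$ whenever $X$ has full column rank, so the same chain of inequalities would give only $\hat{\rho}_1 \le 1$ and say nothing; the role of $(C2)$ is exactly to bound $B$, and hence $\ell'$, away from this degenerate regime.
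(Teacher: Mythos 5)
Your proof is correct and follows essentially the same route as the paper's: bound $\rho_*(\psi)$ by $\hat{\rho}_1$ via Proposition~\ref{pro:acboundsimple}, use Lemma~\ref{lem:s} to get $0 \preccurlyeq S(\beta,U) \preccurlyeq I$ and hence $\lambda_{\max}\left( \Sigma^{-1/2}X^T S(\beta,U) X \Sigma^{-1/2} \right) \leq \lambda_{\max}\left( X\Sigma^{-1}X^T \right)$ pointwise, then finish with $(C2)$. The only cosmetic difference is that you derive the final bound $\lambda_{\max}(X\Sigma^{-1}X^T) \leq \ell/(1+\ell) < 1$ explicitly via the Woodbury identity, whereas the paper invokes the stated equivalence of $(C2)$ with $\lambda_{\max}(X\Sigma^{-1}X^T) \leq \ell' < 1$, citing \citet{chakraborty2016convergence}; your derivation of that step is correct.
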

	
	\begin{proof}
		By Proposition~\ref{pro:acboundsimple},
		\[
		\rho_*(\psi) \leq \hat{\rho}_1 = \sup_{\beta \in \mathbb{R}^p} \mathbb{E}
		\lambda_{\max} \left( \Sigma^{-1/2}X^TS(\beta,U)X\Sigma^{-1/2} \right)
		.
		\]
		It is shown in Lemma~\ref{lem:s} of the Appendix that every diagonal component of $S(\beta,U)$ is less than~$1$ for each value
		of~$\beta$ and~$U$. Hence,
		\[
		\sup_{\beta \in \mathbb{R}^p} \mathbb{E} \lambda_{\max} \left(
		\Sigma^{-1/2}X^TS(\beta,U)X\Sigma^{-1/2} \right) \leq \lambda_{\max}
		\left( \Sigma^{-1/2} X^TX \Sigma^{-1/2} \right) .
		\]
		Note that $\Sigma^{-1/2} X^TX \Sigma^{-1/2}$ has the same eigenvalues
		as $X \Sigma^{-1} X^T$.  Therefore,
		\[
		\sup_{\beta \in \mathbb{R}^p} \mathbb{E} \lambda_{\max} \left(
		\Sigma^{-1/2}X^TS(\beta,U)X\Sigma^{-1/2} \right)
		\leq \lambda_{\max}(X \Sigma^{-1} X^T) \,,
		\]
		and the result follows from $(C2)$.
	\end{proof}
	
	The argument above shows that $\rho_*(\psi)$ is bounded above by the largest eigenvalue of
	$\Sigma^{-1/2} X^TX \Sigma^{-1/2}$, which is always less
	than 1 when~$Q$ is positive definite.  Loosely speaking, as the
	eigenvalues of~$Q$ increase, this bound tends to decrease, suggesting
	that priors providing strong shrinkage tend to yield fast convergence.
	
	\subsubsection{Letting $n$ and $p$ both diverge in a repeated measures design}
	
	We now consider the case where~$n$ and~$p$ grow simultaneously for a
	particular type of data.  Let~$q$ and~$r$ be positive integers.  Assume that there are~$q$ distinct
	rows in the design matrix $X_{n \times p}$, and that each distinct row
	is repeated at least~$r$ times.  Let $r_i$ be the number of
	repetitions for the $i$th distinct row, which we denote by
	$\tilde{x}_i^T$, so $\Sigma_{i=1}^q r_i = n$.  
	Now, consider a deterministic sequence of design
	matrices $\{X_{n(k) \times p(k)}\}_{k=1}^{\infty}$ that satisfy the
	above assumptions, where~$k$ is an index such that $n(k),p(k) \to
	\infty$ as~$k$ tends to infinity. 
	(Of course,~$r$ and~$q$ depend on~$k$ as well.)
	For each~$k$, generate a random
	vector of binary responses as follows. Independently, for each $i =
	1,2,\dots,q$ and $j=1,2,\dots,r_i$, let $y_{ij} \sim
	\mbox{Bernoulli}(G(\tilde{x}_i^T\beta_*))$ be the response
	corresponding to the $j$th copy of the $i$th distinct row of
	$X_{n \times p}$, where $G: \mathbb{R} \to (0,1)$ is a function
	independent of~$k$, and $\beta_* \in \mathbb{R}^{p}$ is the true
	regression coefficient.  Finally, for each~$k$, assign a prior with
	parameters $Q = Q_k$ and $v = v_k$.  We will study the asymptotic
	behavior of $\rho_*(\psi)$ as $k \to \infty$.
	
	For two variables $a=a(k)$ and $b=b(k)$ such that $a, b > 0$, write
	$a \gg b$, or equivalently, $b \ll a$ if $b/a \to 0$ as $k \to
	\infty$. The assumptions that we make are as follows.
	\begin{enumerate}
		\item [$(D1)$] For every $k$, $\Sigma$ is non-singular.
		\item [$(D2)$] $\limsup_{k \to \infty} \sigma (2 \log p)^{1/2}
		= 0$, where~$\sigma$ is defined in
		Lemma~\ref{lem:approx}.
		\item[$(D3)$] $r \gg \log q$.
		\item[$(D4)$] For every~$k$, $\max_{1 \leq i \leq q}
		|\tilde{x}_i^T\beta_*| < \ell$, where~$\ell$ is a constant
		(independent of~$k$) such that
		\[
		\inf_{|\mu| < \ell} \big| G(\mu) \wedge (1-G(\mu)) \big| > 0 \;.
		\]
	\end{enumerate}
	Condition $(D1)$ ensures that the A\&C chain is well-defined.
	It follows from Remark~\ref{rem:error} that, if there exist positive constants $\ell_1$ and $\ell_2$ such that, for every~$k$,
	$\{\|\tilde{x}_i\|_2^2/p\}_{i=1}^q$ are bounded above by $\ell_1$, and $\lambda_{\min}(\Sigma)/n$ is bounded below by $\ell_2$, then $(D2)$ is satisfied if $n \gg p \log
	p$. 
	By a similar argument, if there exist positive constants $\ell_1$ and $\ell_2$ such that, for each~$k$, $\{\|\tilde{x}_i\|_2^2\}_{i=1}^q$ are bounded above by $\ell_1$, and $\lambda_{\min}(\Sigma)/r$ is bounded below by~$\ell_2$, then $(D2)$ is satisfied if $r \gg \log p$.
	(This is the case if, e.g.,~$X$ corresponds to a one-way fixed effects design.)
	If $q \geq p$, then $r \gg \log p$ and $n \gg p \log p$ are necessary for $(D3)$ to hold.
	$(D4)$ serves as a
	sparsity condition.
	
	As in Section~\ref{sssec:largen}, we use~$\tilde{\mathbb{P}}$ to denote probabilities concerning the random data sets.
	In contrast with Section~\ref{sssec:largen}, here, the
	design matrix~$X$ is deterministic, and the randomness is entirely due to the response vector~$y$.
	\begin{proposition}
		\label{pro:repeated}
		If $(D1)-(D4)$ hold, then there exists a constant $\rho < 1$ (independent of~$k$) such that, for any $\epsilon > 0$,
		\[
		\lim\limits_{k \to \infty} \tilde{\mathbb{P}} \left(
		\rho_*(\psi) > \rho + \epsilon \right) = 0 \;.
		\]
		By Proposition~\ref{pro:tvbound}, the result continues to hold if
		$\rho_*(\psi)$ is replaced by $\rho_*^{\mbox{{\tiny TV}}}$.
	\end{proposition}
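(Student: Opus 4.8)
The plan is to control $\rho_*(\psi)$ through the quantity $\hat{\rho}_1$ of Proposition~\ref{pro:acboundsimple} and to bound $\hat{\rho}_1$ uniformly in $\beta$ by a $k$-independent constant on a high-probability event. By Lemma~\ref{lem:approx}, $\mathbb{E}\lambda_{\max}(\Sigma^{-1/2}X^TS(\beta,U)X\Sigma^{-1/2}) \le \hat{\gamma}(\beta) + \sigma(2\log p)^{1/2}$, and $(D2)$ renders the error term negligible as $k\to\infty$; so it suffices to show that $\sup_{\beta}\hat{\gamma}(\beta)$ is bounded away from~$1$ with probability tending to~$1$. Once $\hat{\rho}_1<1$ is established on such an event, Proposition~\ref{pro:acboundsimple} simultaneously guarantees posterior propriety and the bound $\rho_*(\psi)\le\hat{\rho}_1$.

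Next I would use the repeated-measures structure to compute $\mathbb{E}S(\beta,U)$ explicitly. Writing $\bar{s}(\mu) = \int_0^1 s(u,\mu)\,\df u$, the diagonal entries of $\mathbb{E}S(\beta,U)$ equal $\bar{s}(\tilde{x}_i^T\beta)$ for observations with response~$1$ and $\bar{s}(-\tilde{x}_i^T\beta)$ for observations with response~$0$. Grouping the $r_i$ copies of the $i$th distinct row gives $X^T\mathbb{E}S(\beta,U)X = \sum_{i=1}^q r_i\,\hat{\Psi}_i(\beta)\,\tilde{x}_i\tilde{x}_i^T$, where $\hat{\Psi}_i(\beta) = \bar{s}(\mu_i)\hat{p}_i + \bar{s}(-\mu_i)(1-\hat{p}_i)$, $\mu_i = \tilde{x}_i^T\beta$, and $\hat{p}_i = N_i/r_i$ is the empirical success fraction among the copies of row~$i$. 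Since $Q\succeq0$ gives $X^TX\preceq\Sigma$, and $0\le\hat{\Psi}_i\le\max_i\hat{\Psi}_i$ (using Lemma~\ref{lem:s}), the Gram-matrix domination $X^T\mathbb{E}S(\beta,U)X\preceq(\max_i\hat{\Psi}_i(\beta))X^TX\preceq(\max_i\hat{\Psi}_i(\beta))\Sigma$ yields $\hat{\gamma}(\beta)\le\max_i\hat{\Psi}_i(\beta)$, and hence $\sup_{\beta}\hat{\gamma}(\beta)\le\max_i\sup_{\mu\in\mathbb{R}}[\bar{s}(\mu)\hat{p}_i+\bar{s}(-\mu)(1-\hat{p}_i)]$.

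The analytic heart of the argument, and what I expect to be the main obstacle, is to show that this last supremum is bounded below~$1$ by a gap depending only on how far $\hat{p}_i$ sits from $\{0,1\}$. The key facts are that $\bar{s}(\mu)=\mathrm{Var}(\mbox{TN}(\mu,1;1))\in[0,1)$ by Lemma~\ref{lem:s}, that $\bar{s}(-\mu)=\mathrm{Var}(\mbox{TN}(\mu,1;0))$, and the crossover behavior $\bar{s}(\mu)\to1$, $\bar{s}(-\mu)\to0$ as $\mu\to+\infty$ and vice versa as $\mu\to-\infty$. Writing the combination as $1-[(1-\bar{s}(\mu))w+(1-\bar{s}(-\mu))(1-w)]$ with $w=\hat{p}_i$, for $w\in[c,1-c]$ one bounds it below by $c\,(2-\bar{s}(\mu)-\bar{s}(-\mu))$; the two truncated-normal variances cannot both be close to~$1$ at the same~$\mu$, so $\sup_\mu(\bar{s}(\mu)+\bar{s}(-\mu))<2$ by continuity and the limiting values, producing a uniform gap $\delta=\delta(c)>0$. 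This is where the careful estimate of the truncated-normal variances is needed.

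Finally I would supply the probabilistic step. By $(D4)$, $G(\tilde{x}_i^T\beta_*)\in[c_0,1-c_0]$ for every~$i$, with $c_0=\inf_{|\mu|<\ell}\bar{G}(\mu)>0$. A Hoeffding bound gives $\tilde{\mathbb{P}}(|\hat{p}_i-G(\tilde{x}_i^T\beta_*)|\ge c_0/2)\le 2e^{-r c_0^2/2}$, and a union bound over the $q$ distinct rows yields probability at most $2q\,e^{-rc_0^2/2}=2\exp(\log q - rc_0^2/2)$, which tends to~$0$ precisely because $(D3)$ gives $r\gg\log q$. On the complementary event every $\hat{p}_i\in[c_0/2,1-c_0/2]$, so the previous paragraph gives $\sup_\beta\hat{\gamma}(\beta)\le1-\delta(c_0/2)=:\rho<1$, a constant independent of~$k$. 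Combining with the negligible error term from $(D2)$, for any $\epsilon>0$ and all large~$k$ we have $\rho_*(\psi)\le\hat{\rho}_1\le\rho+\epsilon$ on this event, whence $\tilde{\mathbb{P}}(\rho_*(\psi)>\rho+\epsilon)\to0$. The transfer to $\rho_*^{\mbox{{\tiny TV}}}$ is immediate from Proposition~\ref{pro:tvbound}.
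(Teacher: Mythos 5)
Your proposal is correct and follows essentially the same route as the paper's proof: reduce to $\hat{\rho}_1$ via Proposition~\ref{pro:acboundsimple}, Lemma~\ref{lem:approx}, and $(D1)$--$(D2)$; group observations by distinct rows so that $\sup_\beta \hat{\gamma}(\beta)$ is controlled through the empirical success fractions $\hat{p}_i$; and finish with Hoeffding's inequality together with $(D3)$--$(D4)$. The only real difference is that the step you flag as the main analytic obstacle, $\sup_{\mu}\bigl(\bar{s}(\mu)+\bar{s}(-\mu)\bigr)<2$, requires no truncated-normal variance asymptotics or continuity-at-infinity argument: the paper's Lemma~\ref{lem:sint} already gives $\bar{s}(\mu)\leq 1/2$ whenever $\mu\leq 0$, so the sum is at most $3/2$ for every $\mu$, which is precisely how the paper (working with $v_i=\hat{p}_i\wedge(1-\hat{p}_i)$) obtains the uniform bound $\sup_{\beta}\hat{\gamma}(\beta)\leq 1-\tfrac{1}{2}\min_i v_i$.
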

	%
	
	Again, Proposition~\ref{pro:repeated} is derived by analyzing $\hat{\rho}_1$ (defined in Proposition~\ref{pro:acboundsimple})
	in the associated asymptotic regime.
	Conditions $(D1) - (D3)$ are used to show that, as $k \to \infty$, with high probability, $\hat{\rho}_1$ can not be much larger than a quantity that relies solely on~$G$ and the $\tilde{x}_i\beta_*$s.
	One then uses $(D4)$ to show that this quantity is bounded away from~$1$.
	The details are given in Section~\ref{app:ac} of the Appendix.

	As a final remark for this subsection, we note that the analyses we perform on the A\&C chain rely on an upper bound, $\hat{\rho}_1$, on the true convergence rate, $\rho_*(\psi)$.
	It is possible that a different bound, such as $\hat{\rho}_0$ defined in Subsection~\ref{ssec:fixed}, can yield better results.
	Whether stronger asymptotic results can be established via $\hat{\rho}_1$ or other types of bounds is an open question.

	\section{A Gibbs Chain for a Bayesian Random Effects Model} \label{SEC:RANDOMEFFECTS}
	\subsection{Preliminaries} \label{ssec:randomeffectsprel}
	
	Let $p$ and $r$ be positive integers.
	Consider the random effects model
	\[
	y_{ij} = \mu + \theta_i + e_{ij} \,, \quad  j = 1,2,\dots,r, \text{ and } i =1,2,\dots,p ,
	\]
	where, independently, for each~$i$ and~$j$, $\theta_i \sim \mbox{N}(0,\lambda_{\theta}^{-1})$, and $e_{ij} \sim \mbox{N}(0,\lambda_e^{-1})$.
	The goal is to estimate the unknown parameters $(\mu,\lambda_{\theta},
	\lambda_e)$, and perhaps learn about the random effects $(\theta_1,\dots,\theta_p)$ as
	well.
	
	Consider the following prior.
	Independently,
	\[
	\begin{aligned}
		&\pi(\mu) \propto 1 \,, \\
		&\lambda_{\theta} \sim \mbox{Gamma}(a_1,b_1) \,,\\
		&\lambda_e	\sim \mbox{Gamma}(a_2,b_2) \,,
	\end{aligned}
	\]
	where $a_1$, $a_2$, $b_1$, and $b_2$ are positive constants.
	By $\mbox{Gamma}(a,b)$, we mean a Gamma distribution with density function proportional to $x^{a-1} e^{-bx} 1_{x > 0}$.
	Let $y=(y_{11},y_{12},\dots,y_{pr})^T$, and re-parametrize $(\mu,\theta_1,\theta_2,\dots,\theta_p)^T$ as 
	\[
	\eta = (\eta_0, \eta_1, \eta_2, \dots,\eta_p)^T = (\sqrt{p}\mu, \theta_1, \theta_2, \dots, \theta_p)^T.
	\]
	The posterior density function for $(\eta,\lambda_{\theta},\lambda_e)$ is
	\[
	\begin{aligned}
		\pi^*(\eta,\lambda_{\theta},\lambda_e | y) 
		&\propto \lambda_e^{rp/2} \exp \left[ -\frac{\lambda_e}{2} \sum_{i=1}^p \sum_{j=1}^r (y_{ij}-\eta_0/\sqrt{p}-\eta_i)^2 \right] \\
		&\quad \times  \lambda_{\theta}^{p/2} \exp \left( -\frac{\lambda_{\theta}}{2} \sum_{i=1}^p \eta_i^2 \right) \lambda_{\theta}^{a_1-1} e^{-b_1\lambda_{\theta}} \lambda_e^{a_2-1} e^{-b_2\lambda_e} \,,
	\end{aligned}
	\]
	where $\lambda_{\theta} > 0$, $\lambda_e > 0$.
	One can show that this posterior is always proper.
	
	The above posterior distribution is intractable, but one can use a Gibbs algorithm to sample from it \citep[see, e.g.,][]{roman2012convergence}.
	The Gibbs chain is constructed based on the conditional densities $\pi_1(\eta|\lambda_{\theta},\lambda_e,y)$ and $\pi_2(\lambda_{\theta},\lambda_e|\eta,y)$, whose exact forms can be gleaned from $\pi^*$.
	It has the same convergence rate as its $\eta$-marginal chain \citep{roberts2001markov}, whose transition density is given by
	\[
	k(\eta,\tilde{\eta}) = \int_0^{\infty} \int_0^{\infty} \pi_1(\tilde{\eta}|\lambda_{\theta},\lambda_e,y) \pi_2(\lambda_{\theta},\lambda_e|\eta,y) \, \df \lambda_{\theta} \, \df \lambda_e \,,
	\]
	where $(\eta,\tilde{\eta}) \in \mathbb{R}^{p+1} \times \mathbb{R}^{p+1}$.
	The stationary distribution of the marginal chain has density
	\[
	\pi(\eta|y) = \int_0^{\infty} \int_0^{\infty} \pi^*(\eta,\lambda_{\theta},\lambda_e|y) \, \df \lambda_{\theta} \, \df \lambda_e \,.
	\]
	Suppose that the current state of the marginal chain is $\eta = (\eta_0,\eta_1,\dots,\eta_p)^T$, then the next state $\tilde{\eta} = (\tilde{\eta}_0, \tilde{\eta}_1,\dots, \tilde{\eta}_p)^T$ is drawn according to the following steps.
	
	\baro \vspace*{1mm}
	\begin{enumerate}
		\item Draw $\lambda_{\theta}$ from $\mbox{Gamma} \left( p/2+a_1, b_1 + \sum_{i=1}^{p} \eta_i^2/2 \right)$.
		\item Draw $\lambda_e$ from 
		\[
		\mbox{Gamma} \bigg( rp/2 + a_2, b_2 + \sum_{i=1}^p \sum_{j=1}^r (y_{ij}-\eta_0/\sqrt{p}-\eta_i)^2/2 \bigg) .
		\]
		\item Draw $\tilde{\eta}_0$ from $\mbox{N} \left( \sqrt{p}\bar{y}, (\lambda_{\theta}+r\lambda_e)/(r\lambda_e\lambda_{\theta})  \right)$.
		\item Independently, draw $\tilde{\eta}_i$, $i=1,2,\dots,k$, from
		\[
		\mbox{N}\left( \frac{r \lambda_e (\bar{y}_i - \tilde{\eta}_0/\sqrt{p}) }{\lambda_{\theta} + r\lambda_e}, \frac{1}{\lambda_{\theta} + r\lambda_e} \right).
		\]
	\end{enumerate}
	\vspace*{1mm}
	\barba
	\bigskip

	There have been a number of studies concerning the convergence
	properties of Gibbs samplers for Bayesian random effects models like the
	one described above.  
	The most relevant of these in
	our context is \citet{roman2012convergence}, whose main result implies that
	the marginal Gibbs chain defined by $k(\eta,\tilde{\eta})$ is
	geometrically ergodic whenever 
	$p \ge 3$ and $r \ge 2$.
	\citet{roman2012convergence} used a
	technique that does not require the construction of an explicit
	minorization condition \citep[see, e.g.,][Lemma 15.2.8]{meyn2012markov},
	and, consequently, does not yield an explicit upper bound on
	$\rho_*^{\mbox{\tiny{TV}}}$, the chain's convergence rate in total variation (TV).  
	Other related work includes
	\citet{hobert1998geometric} and \citet{tan2009block}, both of which consider
	(a slightly reparameterized version of) the random effects model in question, but in
	conjunction with prior distributions that are different from the ones we
	employ. 
	In each of these
	papers, a set of sufficient conditions for geometric ergodicity of the
	Gibbs sampler is provided, but no attempt is made to perform a
	convergence complexity analysis.  Finally, using a new variant of the
	drift and minorization technique, \citet{yang2017complexity} established some interesting
	asymptotic results for a Gibbs sampler corresponding to a simplified
	version of our model. To be specific, under the (somewhat unrealistic)
	assumption that $\lambda_e$ is known, they showed that the resultant
	Gibbs chain mixes reasonably quickly when $r = 1$ and $p \rightarrow
	\infty$, but they provided no bounds on
	$\rho_*^{\mbox{\tiny{TV}}}$.
	In what follows, we will use a Wasserstein-based bound to study $\rho_*^{\mbox{\tiny{TV}}}$ in a certain large~$r$ large~$p$ regime.
	
	By results in \cite{roberts2006harris}, the marginal chain defined by $k(\eta,\tilde{\eta})$ is Harris ergodic.
	Throughout this section, denote by $K$ its Markov transition function, and by~$\Pi$ its stationary measure, which is given by the posterior density $\pi(\eta|y)$.
	We will study its convergence properties with respect to $\mbox{W}_{\psi}$, where $\psi: \mathbb{R}^{p+1} \times \mathbb{R}^{p+1} \to [0,\infty)$ is now defined to be the usual Euclidean distance.
	As always, $\rho_*(\psi)$ is the chain's geometric convergence rate with respect to $\mbox{W}_{\psi}$.

	The following proposition, whose proof is given in Section~\ref{app:randomeffects} of the Appendix, relates the convergence bounds for Wasserstein and TV distances.
	\begin{proposition} \label{pro:randomeffectstv}
		Assume that $p \geq 2$.
		Then there exists a constant~$c$ such that, for each $m \geq 1$ and $\eta \in \mathbb{R}^{p+1}$,
		\[
		\|K_{\eta}^m - \Pi\|_{\tiny\mbox{TV}} \leq cr^{3/2} p W_{\psi}(K_{\eta}^{m-1}, \Pi) \,.
		\]
		Thus, $\rho_*^{\tiny\mbox{TV}} \leq \rho_*(\psi)$.
	\end{proposition}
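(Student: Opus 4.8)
The plan is to apply Proposition~\ref{pro:madras} with reference measure equal to Lebesgue measure on $\mathbb{R}^{p+1}$ and $\psi$ the Euclidean distance. Everything then reduces to verifying the Lipschitz-type bound \eqref{ine:tvleqpsi} for the marginal transition density $k(\eta,\tilde\eta)$, with a constant bounded by a multiple of $r^{3/2}p$; Proposition~\ref{pro:madras} converts this into the factor $(c/2)$ in front of $\mbox{W}_\psi(K_\eta^{m-1},\Pi)$, and the concluding inequality $\rho_*^{\tiny\mbox{TV}}\le\rho_*(\psi)$ is then immediate, since for a fixed data set $r$ and $p$ are fixed and a constant prefactor does not affect the geometric rate.

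First I would exploit the fact that $\pi_1(\tilde\eta\mid\lambda_\theta,\lambda_e,y)$ does not depend on the current state $\eta$. Consequently $k(\eta,\tilde\eta)-k(\eta',\tilde\eta)=\int_0^\infty\!\int_0^\infty \pi_1(\tilde\eta\mid\lambda_\theta,\lambda_e,y)\,[\pi_2(\lambda_\theta,\lambda_e\mid\eta,y)-\pi_2(\lambda_\theta,\lambda_e\mid\eta',y)]\,\df\lambda_\theta\,\df\lambda_e$, and integrating in $\tilde\eta$, invoking Tonelli and $\int\pi_1\,\df\tilde\eta=1$, collapses the left-hand side of \eqref{ine:tvleqpsi} to $\int_0^\infty\!\int_0^\infty|\pi_2(\lambda_\theta,\lambda_e\mid\eta,y)-\pi_2(\lambda_\theta,\lambda_e\mid\eta',y)|\,\df\lambda_\theta\,\df\lambda_e$, the $L^1$ distance between the two conditional laws of $(\lambda_\theta,\lambda_e)$. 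Since Steps~1 and~2 draw $\lambda_\theta$ and $\lambda_e$ independently given $\eta$, $\pi_2(\cdot\mid\eta,y)$ is the product of a $\mbox{Gamma}(p/2+a_1,\beta_\theta(\eta))$ density and a $\mbox{Gamma}(rp/2+a_2,\beta_e(\eta))$ density, where $\beta_\theta(\eta)=b_1+\tfrac12\sum_i\eta_i^2$ and $\beta_e(\eta)=b_2+\tfrac12\sum_{i,j}(y_{ij}-\eta_0/\sqrt p-\eta_i)^2$. The elementary bound $\int|g_1g_2-g_1'g_2'|\le\int|g_1-g_1'|+\int|g_2-g_2'|$ for product densities then reduces the task to controlling the $L^1$ distance between two Gamma densities that share a shape but differ in rate.

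For each coordinate I would bound this $L^1$ distance by differentiating along the segment $\eta(t)=\eta+t(\eta'-\eta)$, exactly as in Lemma~\ref{lem:diffcontract}: $\int|g(\cdot\mid\eta)-g(\cdot\mid\eta')|\le\int_0^1\!\int|\tfrac{\df}{\df t}g(\cdot\mid\eta(t))|\,\df t$. Using the identity $\partial_\beta g(\lambda;\alpha,\beta)=g(\lambda;\alpha,\beta)(\alpha/\beta-\lambda)$, the inner integral equals the mean absolute deviation of $\mbox{Gamma}(\alpha,\beta)$ times $|\tfrac{\df}{\df t}\beta(\eta(t))|$, and the mean absolute deviation is at most the standard deviation $\sqrt\alpha/\beta$. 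For the $\lambda_\theta$-factor, $|\tfrac{\df}{\df t}\beta_\theta|\le\|(\eta_1(t),\dots,\eta_p(t))\|_2\,\|\eta-\eta'\|_2$, while $\|(\eta_1(t),\dots,\eta_p(t))\|_2/\beta_\theta(\eta(t))\le 1/\sqrt{2b_1}$ uniformly. For the $\lambda_e$-factor, Cauchy--Schwarz and $\sum_{i,j}(y_{ij}-\eta_0/\sqrt p-\eta_i)^2=2(\beta_e-b_2)$ give $|\tfrac{\df}{\df t}\beta_e|\le 2\sqrt r\,\beta_e(\eta(t))^{1/2}\|\eta-\eta'\|_2$, while $\beta_e(\eta(t))\ge b_2$. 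With shapes $\alpha_1=p/2+a_1=O(p)$ and $\alpha_2=rp/2+a_2=O(rp)$, these combine to a bound of order $r\,p^{1/2}$ on the $L^1$ distance, which is certainly $O(r^{3/2}p)$, establishing \eqref{ine:tvleqpsi}.

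The step I expect to be the main obstacle is the control of the rate derivatives. The rates $\beta_\theta$ and $\beta_e$ are quadratic, hence non-Lipschitz, functions of $\eta$, so $|\beta(\eta)-\beta(\eta')|$ admits no uniform bound and a direct estimate fails. What rescues the argument is that the sensitivity of a Gamma density to its rate, $\sqrt\alpha/\beta$, decays as the rate grows, and this decay precisely offsets the quadratic growth of $|\tfrac{\df}{\df t}\beta|$ once one invokes the prior-induced lower bounds $\beta_\theta\ge b_1$ and $\beta_e\ge b_2$ together with the elementary estimates $\|\cdot\|_2/(b_1+\tfrac12\|\cdot\|_2^2)\le 1/\sqrt{2b_1}$ and $\sum_{i,j}(\,\cdot\,)^2=2(\beta_e-b_2)$. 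Checking that these cancellations are uniform in the state $\eta$ (so that the resulting constant is genuinely independent of $\eta$) is the crux of the proof; justifying the interchange of differentiation and integration, and using $p\ge 2$ to guarantee finiteness of $\mbox{W}_\psi(K_\eta^{m-1},\Pi)$, are routine.
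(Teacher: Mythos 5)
Your proposal is correct, and it shares the paper's outer skeleton --- apply Proposition~\ref{pro:madras}, use the fact that $\pi_1$ does not depend on the current state to collapse the left side of \eqref{ine:tvleqpsi} to the $L^1$ distance between the conditional laws of $(\lambda_\theta,\lambda_e)$, and split that product-density distance into the two marginal Gamma $L^1$ distances --- but your treatment of the core Gamma estimate is genuinely different from the paper's. The paper invokes Lemma~1 of \citet{hobert2001art} to write each Gamma $L^1$ distance exactly (via the single crossing point of two same-shape densities), bounds it by $2\bigl[\bigl((b^{(\eta)}+\Delta)/b^{(\eta)}\bigr)^{\alpha}-1\bigr]$, and then applies the mean-value theorem; since that bound is polynomial rather than linear in $\|\eta-\eta'\|_2$, the resulting Lipschitz estimate holds only \emph{locally} (for $\|\eta-\eta'\|_2 < r^{-3/2}p^{-1}$), and the paper must finish with a segment-subdivision argument, chaining the local bound along the line from $\eta'$ to $\eta$ to get the global inequality; the assumption $p \geq 2$ enters precisely to make the mean-value-theorem constants work. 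You instead differentiate the Gamma density along the segment, use the identity $\partial_\beta g = g(\alpha/\beta-\lambda)$ so that the $L^1$ norm of the $t$-derivative is the mean absolute deviation times $|\df \beta/\df t|$, bound the mean absolute deviation by the standard deviation $\sqrt{\alpha}/\beta$, and then exploit the uniform cancellations $x/(b_1+x^2/2)\le 1/\sqrt{2b_1}$ and $\sum_{i,j}(\cdot)^2 = 2(\beta_e-b_2)$. This yields a \emph{globally} linear bound of order $r\sqrt{p}\,\|\eta-\eta'\|_2$ in one pass: no localization, no chaining step, a sharper constant than the paper's $r^{3/2}p$ (which of course still implies the stated inequality), and in fact no need for $p\ge 2$ at all. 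Your one slightly off remark is attributing the role of $p\ge 2$ to finiteness of $\mbox{W}_{\psi}(K_\eta^{m-1},\Pi)$; Proposition~\ref{pro:madras} needs no such finiteness (the bound is vacuous when the right side is infinite), and in the paper that assumption serves only the mean-value-theorem step you have bypassed. This is a cosmetic slip, not a gap.
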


	We now construct a random mapping that induces~$K$.  
	Independently, let $N_0,N_1,\dots,N_p$ be standard normal random variables, and let $J_1 \sim \mbox{Gamma}(p/2+a_1,1)$, $J_2 \sim \mbox{Gamma}(rp/2+a_2, 1)$.
	Given $\eta \in
	\mathbb{R}^{p+1}$, the next state of the marginal Gibbs chain can be
	expressed as follows:
	\[
	f(\eta) = (\tilde{\eta}_0^{(\eta)}, \tilde{\eta}_1^{(\eta)}, \dots, \tilde{\eta}_p^{(\eta)})^T \,,
	\]
	where
	\begin{equation} \nonumber
		\begin{aligned}
			\tilde{\eta}_i^{(\eta)} &= \frac{r \lambda_e^{(\eta)} (\bar{y}_i - \tilde{\eta}_0^{(\eta)}/\sqrt{p})}{\lambda_{\theta}^{(\eta)} + r\lambda_e^{(\eta)}} + \sqrt{\frac{1}{\lambda_{\theta}^{(\eta)} + r\lambda_e^{(\eta)}}} N_i  \; \text{ for } i=1,2,\dots,p \,, \\
			\tilde{\eta}_0^{(\eta)} &= \sqrt{p} \bar{y} + \sqrt{\frac{\lambda_{\theta}^{(\eta)}+r\lambda_e^{(\eta)}}{r\lambda_e^{(\eta)} \lambda_{\theta}^{(\eta)}} } N_0 \,,\\
			\lambda_e^{(\eta)} &= \frac{J_2}{b_2 +  \sum_{i=1}^p \sum_{j=1}^r (y_{ij}-\eta_0/\sqrt{p}-\eta_i)^2/2 } \,,\\
			\lambda_{\theta}^{(\eta)} &= \frac{J_1}{b_1 +  \sum_{i=1}^{p} \eta_i^2/2 } \,.
		\end{aligned}
	\end{equation}
	In the next section, we show how the above construction yields well-behaved convergence bounds when~$p$ and~$r$ are large.
	
	\subsection{Convergence Complexity Analysis}
	
	
	Consider a sequence of data sets for the random effects model indexed by~$p$.
	We study the associated sequence of marginal Gibbs chains under the following assumptions.
	
	\begin{enumerate}
		\item [$(E1)$] There exists a constant $\delta > 0$ such that $r^2/p^{3+\delta} \to \infty$ as~$p$ tends to infinity.
		\item [$(E2)$] There exist positive constants $\ell_1$ and $\ell_2$ such that, for large enough~$p$,
		\[
		\frac{1}{p}\sum_{i=1}^{p} (\bar{y}_i - \bar{y})^2 < \ell_1 \,,\quad
		\frac{1}{rp}\sum_{i=1}^p\sum_{j=1}^{r} (y_{ij}-\bar{y}_i)^2 < \ell_2 \,.
		\]
	\end{enumerate}
	Condition $(E1)$ regulates the relation between the sample size and the number of unknown parameters.
	Condition $(E2)$ is easy to satisfy unless the model is terribly misspecified.

	The main result of this section is that, under $(E1)$ and $(E2)$, the chain converges at an extremely rapid rate.
	
	\begin{proposition} \label{pro:randomeffects}
		Let $\psi$ be the Euclidean norm on $\mathbb{R}^{p+1}$.
		Suppose that $(E1)$ and $(E2)$ hold.
		Then for~$p$ large enough, there exist a constant $L_0 > 0$ (independent of~$p$) and some $\rho < 1$ (that depends on~$p$) such that
		\[
		W_{\psi}(K_{\eta}^m, \Pi) \leq 4 \left[ \sum_{i=1}^{p} \left( \eta_i + \bar{y} - \bar{y}_i \right)^2 + (\eta_0- \sqrt{p}\bar{y})^2 + L_0p \right] \rho^m  \,
		\]
		for $\eta \in \mathbb{R}^{p+1}, m \geq 0$.
		Moreover, $\limsup_{p \to \infty} \rho = 0$.
	\end{proposition}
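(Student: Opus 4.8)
The plan is to apply Corollary~\ref{cor:classical} to the random mapping $f$ constructed in Section~\ref{ssec:randomeffectsprel}, with $\psi$ the Euclidean distance. This requires two ingredients: a \emph{global} contraction bound $\sup_{t\in[0,1]}\mathbb{E}\|\df f(\eta+t\alpha)/\df t\| \le \rho\,\|\alpha\|$ valid for all $\eta,\alpha\in\mathbb{R}^{p+1}$, with a constant $\rho=\rho(p)$ satisfying $\limsup_{p\to\infty}\rho=0$; and a bound on the resulting prefactor $c(\eta)/(1-\rho)$, where $c(\eta)=\mathbb{E}\|f(\eta)-\eta\|$, by the bracketed quantity times~$4$.

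The structural observation that makes the derivative tractable is that, once the driving variables $N_0,\dots,N_p,J_1,J_2$ are fixed, every coordinate of $f(\eta)$ is an explicit function of $\eta$ \emph{only through} the two scalars $\lamte$ and $\lamee$. Writing $\eta(t)=\eta+t\alpha$, the chain rule therefore gives
\[
\frac{\df f(\eta+t\alpha)}{\df t} = \frac{\partial f}{\partial\lambda_\theta}\,\frac{\df\lamt}{\df t} + \frac{\partial f}{\partial\lambda_e}\,\frac{\df\lame}{\df t},
\]
where the two partials are $(p{+}1)$-vectors evaluated at $(\lamt,\lame)$, computable in closed form. Since $\lamt=J_1/B_\theta(t)$ and $\lame=J_2/B_e(t)$, with $B_\theta(t)=b_1+\tfrac12\sum_i\eta_i(t)^2$ and $B_e(t)=b_2+\tfrac12\sum_{i,j}(y_{ij}-\eta_0(t)/\sqrt p-\eta_i(t))^2$, one has $\df\lamt/\df t=-\lamt\,B_\theta'(t)/B_\theta(t)$ and likewise for $\lame$. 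A Cauchy--Schwarz estimate of $B_\theta'(t)$ and $B_e'(t)$ that \emph{retains} the dependence on $\eta(t)$ gives $|\df\lamt/\df t|\le\sqrt2\,\lamt\,B_\theta(t)^{-1/2}\|\alpha\|$ and $|\df\lame/\df t|\le 2\sqrt r\,\lame\,B_e(t)^{-1/2}\|\alpha\|$.

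It then remains to bound $\mathbb{E}\big[\|\partial_{\lambda_\theta}f\|\,|\df\lamt/\df t|\big]+\mathbb{E}\big[\|\partial_{\lambda_e}f\|\,|\df\lame/\df t|\big]$ by $\rho\|\alpha\|$, and this is the main obstacle. The key is that the factors of $\lamt,\lame$ appearing in the partials, which grow as $\eta$ moves away from the data, are cancelled by the decay of $B_\theta'/B_\theta$ and $B_e'/B_e$; this is exactly what makes the contraction \emph{uniform} in $\eta$. After integrating---using the independence of $N_0,\dots,N_p$ from $(J_1,J_2,\eta)$ together with the Gamma-moment estimates for $J_1^{-1/2}$, $J_2^{-1}$, and so on---the $\lambda_\theta$-contribution is of order $p^{-1/2}\|\alpha\|$. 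The delicate term is the one in $\partial_{\lambda_e}\tilde\eta_i$ proportional to $\bar y_i-\tilde\eta_0/\sqrt p$: its squared sum over $i$ is, by $(E2)$, of order~$p$, and after multiplication by $|\df\lame/\df t|$ and taking expectations it contributes a term of order $p/\sqrt r$ (with constants depending only on $b_1,b_2,\ell_1,\ell_2$). Condition $(E1)$, which forces $r\gg p^{3/2}$, drives every such term to~$0$, so $\rho(p)$ satisfies $\limsup_{p\to\infty}\rho=0$. Showing that all the $\eta$-dependence cancels so that the bound is simultaneously uniform in $\eta$ and $o(1)$ in $p$ is where $(E1)$ and $(E2)$ are consumed.

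For the prefactor, set $\eta^\ast=(\sqrt p\,\bar y,\ \bar y_1-\bar y,\ \dots,\ \bar y_p-\bar y)^T$, so that the bracket in the statement is exactly $\|\eta-\eta^\ast\|^2$. By the triangle inequality, Lemma~\ref{lem:diffcontract}, and the contraction just proved,
\[
c(\eta)=\mathbb{E}\|f(\eta)-\eta\| \le \mathbb{E}\|f(\eta)-f(\eta^\ast)\| + \mathbb{E}\|f(\eta^\ast)-\eta^\ast\| + \|\eta-\eta^\ast\| \le (1+\rho)\|\eta-\eta^\ast\| + C_0,
\]
where a direct variance computation at the centre $\eta^\ast$---again invoking $(E2)$ and $r\gg p$---shows $\mathbb{E}\|f(\eta^\ast)-\eta^\ast\|^2=O(1)$, so that $C_0:=\sup_p\big(\mathbb{E}\|f(\eta^\ast)-\eta^\ast\|^2\big)^{1/2}<\infty$. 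For $p$ large, $\rho$ is small, hence $(1+\rho)/(1-\rho)\le2$ and $1/(1-\rho)\le2$; combining with $2a\le 4a^2+\tfrac14$ and choosing a $p$-independent $L_0$ with $4L_0p\ge 2C_0+\tfrac14$ for all large~$p$ yields $c(\eta)/(1-\rho)\le 4(\|\eta-\eta^\ast\|^2+L_0p)$, which is the asserted bound.
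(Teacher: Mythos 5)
There is a genuine gap: your entire plan rests on Corollary~\ref{cor:classical}, i.e., on a \emph{global} contraction bound $\sup_{t\in[0,1]}\mathbb{E}\,\|\df f(\eta+t\alpha)/\df t\|_2\leq\rho\,\|\alpha\|_2$ valid for \emph{all} $\eta,\alpha\in\mathbb{R}^{p+1}$ with $\rho=\rho(p)\to0$, and no such bound holds for this chain. The cancellation you invoke (``the factors of $\lamte,\lamee$ are cancelled by the decay of $B_\theta'/B_\theta$ and $B_e'/B_e$'') fails off the paper's set $C$. Concretely, take $\eta$ with $\sum_{i=1}^p\eta_i^2=2b_1$ and $\sum_i \eta_i = 0$ (so $\lamte=J_1/(2b_1)$, of order $p$), and choose $\eta_0$ so that $(\eta_0/\sqrt p-\bar y)^2\approx 2b_1r/p$, which forces $b_2^{(\eta)}\approx b_1r^2$ and hence $r\lamee\approx\lamte$. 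Perturbing along $\alpha\propto(0,\eta_1,\dots,\eta_p)^T$, the coefficient $r\lamee\,|\df\lamte/\df t|/(\lamte+r\lamee)^2$ is of \emph{constant} order, so the term of $\df\tilde\eta_i^{(\eta+t\alpha)}/\df t$ proportional to $\bar y-\bar y_i$ (the quantity $A_{1,i}$ in the paper's proof of Lemma~\ref{lem:contractrandomeffects}) is approximately $(\bar y-\bar y_i)\,\|\alpha\|_2/(4\sqrt{2b_1})$ at $t=0$; these are conditional means given $(J_1,J_2)$, so by Jensen, $\mathbb{E}\|\df f(\eta+t\alpha)/\df t\|_2\gtrsim\bigl(\textstyle\sum_i(\bar y_i-\bar y)^2\bigr)^{1/2}\|\alpha\|_2/(4\sqrt{2b_1})$, which is of order $\sqrt p\,\|\alpha\|_2$ whenever the empirical variance of the group means is of order one (permitted by $(E2)$). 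So $f$ genuinely \emph{expands}, by a factor of order $\sqrt p$, at such points; no $\rho<1$ works globally, let alone $\rho\to0$. These points have $V(\eta)\asymp r/p\gg p^{\delta/2}$ under $(E1)$, i.e., they lie outside $C=\{V(\eta)+V(\eta')\leq p^{\delta/2}\}$ --- which is exactly why the paper does not use Corollary~\ref{cor:classical} but Proposition~\ref{pro:general}: the drift condition of Lemma~\ref{lem:driftrandomeffects} (with $p\lambda$ and $L$ bounded) controls excursions into the expansion region, Lemma~\ref{lem:contractrandomeffects} gives $\gamma\to0$ only \emph{on}~$C$ together with the global Lipschitz-type constant $\gamma_0=O(\sqrt p)$, and $(A3')$ balances $\gamma,\gamma_0,\lambda,L,d$ so that $\rho_a\to0$. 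The drift condition is not an optional refinement here; your argument has no substitute for it, and your prefactor step inherits the same gap since it applies the (false) contraction along the segment from $\eta^\ast$ to an arbitrary~$\eta$.

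A secondary, internal problem: even granting your estimates, your ``delicate term'' of order $p/\sqrt r$ does not vanish under $(E1)$. Condition $(E1)$ only guarantees $r\gg p^{(3+\delta)/2}$, so $p/\sqrt r\ll p^{1/4-\delta/4}$, which tends to zero only if $\delta>1$, whereas $(E1)$ merely posits \emph{some} $\delta>0$. The contraction rate the paper actually obtains on $C$ is of order $p^{(3+\delta)/2}/r$ (the square root of the $O(p^{3+\delta}/r^2)$ bound on $\varrho$ in the proof of Lemma~\ref{lem:contractrandomeffects}), and it is precisely this quantity that $(E1)$ is tailored to kill; your claimed rate is strictly worse by a factor $\sqrt{r/p}$.
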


	Proposition~\ref{pro:randomeffects} implies that, under $(E1)$ and $(E2)$, $\rho_*(\psi) \to 0$ as $p \to \infty$.
	By Proposition~\ref{pro:randomeffectstv}, this is also the case for $\rho_*^{\tiny\mbox{TV}}$.
	
	The proof of Proposition~\ref{pro:randomeffects} relies on an application of Proposition~\ref{pro:general}.
	The next two lemmas, both of which are proved in Section~\ref{app:randomeffects} of the Appendix, establish a drift and an associated contraction condition.
	
	For $\eta = (\eta_0,\eta_1,\dots,\eta_p)^T \in \mathbb{R}^{p+1}$, define
	\[
	V(\eta) = \frac{1}{p} \sum_{i=1}^{p} (\eta_i + \bar{y} - \bar{y}_i)^2 + (\eta_0/\sqrt{p} - \bar{y})^2 \,.
	\]
	\begin{lemma} \label{lem:driftrandomeffects}
		Under Assumptions $(E1)$ and $(E2)$, for~$p$ large enough, there exist $\lambda < 1$ and $L < \infty$ (that depend on~$p$) such that
		\begin{equation} \label{ine:driftrandomeffects}
			\int_{\mathbb{R}^{p+1}} V(\eta') K(\eta,\df \eta') \leq \lambda V(\eta) + L \,, \quad \eta \in \mathbb{R}^{p+1} \,.
		\end{equation}
		Moreover, $\limsup_{p \to \infty} p\lambda < \infty$, and $\limsup_{p \to \infty} L < \infty$.
	\end{lemma}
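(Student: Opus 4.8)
The plan is to exploit the random mapping $f$ defined in Section~\ref{ssec:randomeffectsprel}, computing $\mathbb{E}\,V(f(\eta))$ by a two-stage conditioning: first integrate out the Gaussian noise $N_0,\dots,N_p$ with the Gamma variables $J_1,J_2$ (equivalently $\lamte,\lamee$) held fixed, and then average over $J_1,J_2$. Writing $D=\lamte+r\lamee$ and abbreviating $\lambda_\theta=\lamte$, $\lambda_e=\lamee$, a direct computation from the displayed formulas for $\tilde{\eta}_i^{(\eta)}$ and $\tilde{\eta}_0^{(\eta)}$ shows that $\tilde{\eta}_i^{(\eta)}+\bar{y}-\bar{y}_i=-\frac{\lambda_\theta}{D}(\bar{y}_i-\bar{y})-\frac{N_0}{\sqrt{p}}\sqrt{\frac{r\lambda_e}{D\lambda_\theta}}+\frac{N_i}{\sqrt{D}}$ and $\tilde{\eta}_0^{(\eta)}/\sqrt{p}-\bar{y}=\frac{N_0}{\sqrt{p}}\sqrt{\frac{D}{r\lambda_e\lambda_\theta}}$. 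Taking the conditional expectation over the independent standard normals then yields
\[
\mathbb{E}\big[V(f(\eta))\mid J_1,J_2\big]=\Big(\frac{\lambda_\theta}{D}\Big)^2\frac{1}{p}\sum_{i=1}^p(\bar{y}_i-\bar{y})^2+\frac{1}{p}\,\frac{r\lambda_e}{D\lambda_\theta}+\frac{1}{D}+\frac{1}{p}\,\frac{D}{r\lambda_e\lambda_\theta}.
\]

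Next, I would decouple $\lambda_\theta$ and $\lambda_e$ using the trivial bounds $\lambda_\theta/D\le1$ and $r\lambda_e/D\le1$, together with the identity $D/(r\lambda_e\lambda_\theta)=1/(r\lambda_e)+1/\lambda_\theta$. This reduces every summand to a bounded multiple of either $1/\lambda_\theta=B_\theta(\eta)/J_1$ or $1/(r\lambda_e)=B_e(\eta)/(rJ_2)$, where $B_\theta(\eta)=b_1+\tfrac12\sum_i\eta_i^2$ and $B_e(\eta)=b_2+\tfrac12\sum_i\sum_j(y_{ij}-\eta_0/\sqrt{p}-\eta_i)^2$ are the deterministic, $\eta$-dependent Gamma rate parameters from Steps~1 and~2 of the algorithm. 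Since $J_1\sim\mbox{Gamma}(p/2+a_1,1)$ and $J_2\sim\mbox{Gamma}(rp/2+a_2,1)$ are independent, I can integrate them out using $\mathbb{E}(1/J_1)=(p/2+a_1-1)^{-1}$ and $\mathbb{E}(1/J_2)=(rp/2+a_2-1)^{-1}$, both valid and of orders $2/p$ and $2/(rp)$ once $p$ is large enough that the shape parameters exceed $1$.

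The heart of the argument is then to express $B_\theta(\eta)$ and $B_e(\eta)$ through $V(\eta)$. Completing the square gives $\sum_i\sum_j(y_{ij}-\eta_0/\sqrt{p}-\eta_i)^2=\sum_i\sum_j(y_{ij}-\bar{y}_i)^2+r\sum_i(\bar{y}_i-\eta_0/\sqrt{p}-\eta_i)^2$, and writing $\bar{y}_i-\eta_0/\sqrt{p}-\eta_i=-[(\eta_i+\bar{y}-\bar{y}_i)+(\eta_0/\sqrt{p}-\bar{y})]$ bounds the second sum by $2pV(\eta)$ via $(a+b)^2\le2a^2+2b^2$; similarly, expanding $\eta_i=(\eta_i+\bar{y}-\bar{y}_i)+(\bar{y}_i-\bar{y})$ and applying Cauchy--Schwarz and Young to the cross term bounds $\sum_i\eta_i^2$ by $(1+\epsilon)pV(\eta)$ plus a $V$-free remainder. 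Crucially, condition $(E2)$ controls the $V$-free pieces through $\tfrac1p\sum_i(\bar{y}_i-\bar{y})^2\le\ell_1$ and $\tfrac1{rp}\sum_i\sum_j(y_{ij}-\bar{y}_i)^2\le\ell_2$. Combining, the term $\mathbb{E}[1/(p\lambda_\theta)]$ (appearing in the second and fourth summands) contributes $\approx V(\eta)/p$, the term $\mathbb{E}[1/D]\le\mathbb{E}[1/(r\lambda_e)]$ contributes $\approx2V(\eta)/r$, the first summand is dominated by $\ell_1$, and all remainders are $O(1)$. This gives $\mathbb{E}\,V(f(\eta))\le\lambda V(\eta)+L$ with $\lambda$ of order $1/p+1/r$ and $L$ of order $1$; since $(E1)$ forces $r\gg p^{(3+\delta)/2}\gg p$, we obtain $\limsup_p p\lambda<\infty$ and $\limsup_p L\le\ell_1<\infty$, as claimed.

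The main obstacle I anticipate is the bookkeeping in this final step: keeping the $V(\eta)$-proportional pieces (which must end up with an $O(1/p)$ coefficient) cleanly separated from the $O(1)$ remainders, while absorbing the several cross terms produced by completing the square without inflating the drift coefficient beyond $O(1/p)$. A secondary subtlety is that the ``variance'' summands $1/D$ and $D/(r\lambda_e\lambda_\theta)$ grow linearly in $V(\eta)$ through $B_e$, so that their damping depends essentially on $r\gg p$ supplied by $(E1)$; without it the $2V(\eta)/r$ contribution would not be $O(V(\eta)/p)$ and the conclusion $\limsup_p p\lambda<\infty$ would fail.
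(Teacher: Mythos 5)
Your proposal is correct and follows essentially the same route as the paper's proof: both integrate out the Gaussian noise given $(J_1,J_2)$, bound the resulting terms via $\lamte/(\lamte+r\lamee)\leq 1$, $r\lamee/(\lamte+r\lamee)\leq 1$, and the identity $(\lamte+r\lamee)/(r\lamee\lamte)=1/(r\lamee)+1/\lamte$, then use the inverse-Gamma moments of $J_1,J_2$ together with the within/between sum-of-squares decomposition and $(a+b)^2\leq 2a^2+2b^2$ to express the Gamma rate parameters through $V(\eta)$ and the $(E2)$-controlled quantities, arriving at $\lambda = O(1/p+1/r)$ and $L=O(1)$ with $r\gg p$ from $(E1)$ closing the argument. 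The only cosmetic differences (your general Young parameter $\epsilon$ versus the paper's $\epsilon=1$, and writing the conditional expectation in closed form before bounding) do not change the substance.
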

	
	\begin{lemma} \label{lem:contractrandomeffects}
		Suppose that $(E1)$ and $(E2)$ hold.
		Let
		\[
		C = \{(\eta,\eta') \in \mathbb{R}^{p+1} \times \mathbb{R}^{p+1}: \, V(\eta) + V(\eta') \leq p^{\delta/2} \} \,,
		\]
		where $\delta > 0$ is given in $(E1)$.
		Let $f$ be the random mapping defined in the previous subsection.
		Then for~$p$ large enough, there exist $\gamma < 1$ and $\gamma_0 < \infty$ (that depend on~$p$) such that
		\begin{equation} \nonumber
			\sup_{t \in [0,1]} \mathbb{E} \left\| \frac{\df}{\df t} f(\eta+t(\eta'-\eta)) \right\|_2 \leq \begin{cases}
				\gamma \|\eta' - \eta\|_2 & (\eta,\eta') \in C \\
				\gamma_0 \|\eta' - \eta\|_2 & \text{otherwise}
			\end{cases} \,.
		\end{equation}
		Moreover, as $p \to \infty$, $\gamma \to 0$, and $\gamma_0/\sqrt{p}$ is bounded from above by a constant.
	\end{lemma}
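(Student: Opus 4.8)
The plan is to parametrize the line segment by $\alpha = \eta'-\eta$ and differentiate $f(\eta+t\alpha)$ directly. The crucial observation is that, for fixed realizations of $N_0,\dots,N_p$, $J_1$, and $J_2$, the map $t \mapsto f(\eta+t\alpha)$ depends on $t$ \emph{only} through the two random precisions $\lamt$ and $\lame$; these in turn depend on $t$ through the quadratic forms $A(t) = b_1 + \frac12\sum_{i=1}^p(\eta_i+t\alpha_i)^2$ and $B(t) = b_2 + \frac12\sum_{i,j}(y_{ij}-(\eta_0+t\alpha_0)/\sqrt p-(\eta_i+t\alpha_i))^2$ via $\lamt = J_1/A(t)$ and $\lame = J_2/B(t)$. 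Logarithmic differentiation then gives $\df\lamt/\df t = -\lamt\,A'(t)/A(t)$ and $\df\lame/\df t = -\lame\,B'(t)/B(t)$, and the chain rule assembles $\df f(\eta+t\alpha)/\df t$ from these.

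First I would record the explicit form of the derivative. Writing the mixing weight $w = r\lame/(\lamt+r\lame)\in(0,1)$ and the conditional noise scales $(\lamt+r\lame)^{-1/2}$ and $[(\lamt+r\lame)/(r\lame\lamt)]^{1/2}$, the derivative of the $i$th component ($i \ge 1$) splits into a term proportional to $\df w/\df t$, a term from $\df\tilde\eta_0^{(\eta+t\alpha)}/\df t$, and a term from the derivative of the noise scale times $N_i$; the zeroth component's derivative comes solely from differentiating its noise scale times $N_0$. Each of these pieces is a rational expression in $\lamt,\lame$, their $t$-derivatives, the Gaussian variables, and $\alpha$. I would then bound $\|\df f(\eta+t\alpha)/\df t\|_2$ pointwise, grouping terms so that the precision $\lamt+r\lame$ appears in denominators wherever possible.

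Next I would take the expectation over $(N_0,\dots,N_p,J_1,J_2)$. The $N_i$ integrate out against their standard-normal laws, and the remaining expectations are handled by moment bounds for the Gamma ratios $\lamt,\lame$ (both positive and negative moments, using $J_1\sim\mbox{Gamma}(p/2+a_1,1)$ and $J_2\sim\mbox{Gamma}(rp/2+a_2,1)$). The key structural input is the identity
\[
\sum_{i,j}(y_{ij}-\eta_0/\sqrt p-\eta_i)^2 = \sum_{i,j}(y_{ij}-\bar y_i)^2 + r\sum_{i=1}^p\big[(\eta_i+\bar y-\bar y_i)+(\eta_0/\sqrt p-\bar y)\big]^2 ,
\]
which links $B(t)$ to the drift function $V$ and to the residual quantities controlled by $(E2)$; a similar bound relates $A(t)$ and $\sum_i\eta_i^2$ to $V$ and $(E2)$. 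On the set $C$, where $V(\eta)+V(\eta')\le p^{\delta/2}$, these give $B(t)=O(rp)$, so $\lame$ is bounded above and below by quantities of order one; since $\lamt\le J_1/b_1$ is at most of order $p$ while $r\gg p$ under $(E1)$, the total precision satisfies $\lamt+r\lame\asymp r$. The conditional noise scales are then $O(r^{-1/2})$, and combining this with $(E1)$ (i.e.\ $r^2/p^{3+\delta}\to\infty$) drives the contraction factor to $0$, yielding $\gamma\to0$. Off $C$, I would use the cruder uniform bounds $w\le1$ and $\lamt+r\lame\ge r\lame$ to extract a factor of order $\sqrt p$, giving the stated $\gamma_0/\sqrt p=O(1)$.

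The main obstacle is the expectation step: the derivative mixes square roots and ratios of the Gamma-distributed precisions with the Gaussian noise, so I must control expressions such as $\mathbb{E}[(\df\lamt/\df t)\,(\lamt+r\lame)^{-1/2}]$ and $\mathbb{E}[(\df w/\df t)\,(\bar y_i-\tilde\eta_0^{(\eta+t\alpha)}/\sqrt p)]$ uniformly over $t\in[0,1]$ and over $(\eta,\eta')\in C$. Doing so requires bounding the data-dependent ratios $A'(t)/A(t)$ and $B'(t)/B(t)$ on $C$ (via Cauchy--Schwarz together with the lower bounds $A(t)\ge b_1$, $B(t)\ge b_2$ and the size estimates above), and then tracking the precise powers of $p$ and $r$ through every term so that the competing contributions combine to give $\gamma\to0$ and $\gamma_0=O(\sqrt p)$ rather than a vacuous bound.
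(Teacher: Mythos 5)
Your proposal follows essentially the same route as the paper's proof: logarithmic differentiation of the Gamma-ratio precisions, decomposition of each component derivative into the mixing-weight, $\tilde\eta_0$, and noise-scale pieces, Gamma moment bounds after integrating out the Gaussians, the identity linking the quadratic form inside $\lambda_e^{(\cdot)}$ to $V$ and the $(E2)$ residuals, capped bounds (your $w\le 1$ device, the paper's ``$\wedge\,1$'' terms) off $C$, and power counting in $p$ and $r$ driven by $(E1)$. Two points need tightening, though. First, your claim that membership in $C$ controls $B(t)$ for all $t\in[0,1]$ uses only the endpoint constraint $V(\eta)+V(\eta')\le p^{\delta/2}$; you need the observation (made explicitly in the paper) that $V$ is a non-negative definite quadratic form and hence convex, so that $V(\eta+t(\eta'-\eta))\le p^{\delta/2}$ along the whole segment --- without this, your uniform-in-$t$ bound on $C$ does not follow. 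Second, your bookkeeping on $C$ drops a factor: $B(t)\le b_2+\tfrac12\sum_{i,j}(y_{ij}-\bar y_i)^2+rp\,V(\eta+t(\eta'-\eta))=O\bigl(rp^{1+\delta/2}\bigr)$, not $O(rp)$, so $\lambda_e^{(\cdot)}$ is only bounded below by order $p^{-\delta/2}$ and the squared contraction factor comes out as $O\bigl(p^{3+\delta}/r^2\bigr)$ rather than $O(p^3/r^2)$; this is precisely why $(E1)$ demands $r^2/p^{3+\delta}\to\infty$ and not merely $r^2/p^3\to\infty$, and once the exponents are corrected your argument closes exactly as the paper's does.
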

	
	We are now ready to prove Proposition~\ref{pro:randomeffects}.
	
	\begin{proof} [Proof of Proposition~\ref{pro:randomeffects}]		
		By Lemma~\ref{lem:driftrandomeffects}, for~$p$ large enough, one can find $\lambda < 1$ and $L < \infty$ such that
		\eqref{ine:driftrandomeffects} holds.
		A bit of calculation reveals that, for $(\eta,\eta') \in \mathbb{R}^{p+1} \times \mathbb{R}^{p+1}$,
		\[
		\psi(\eta,\eta') \leq  \|\eta-\eta'\|_2^2 + 2p \leq 2p[V(\eta) + V(\eta') + 1] \,.
		\]
		It follows that for~$p$ sufficiently large, Condition $(A1')$ in Proposition~\ref{pro:general} is satisfied with the above~$\lambda$ and~$L$, and $c=2p$.
		Moreover, we can assume that, as $p \to \infty$, $p\lambda$ and~$L$ are bounded.

		Now, $2L/(1-\lambda) < p^{\delta/2}$ when~$p$ is sufficiently large.
		For each such~$p$, define
		\[
		C = \{(\eta,\eta') \in \mathbb{R}^{p+1} \times \mathbb{R}^{p+1} : \, V(\eta) + V(\eta') \leq p^{\delta/2} \} \,.
		\]
		It follows from Lemma~\ref{lem:contractrandomeffects} that, for~$p$ large enough, Condition $(A2')$ holds with some $\gamma < 1$ and $\gamma_0 < \infty$, and $d=p^{\delta/2}$.
		Moreover, as $p \to \infty$, $\gamma \to 0$, and $\gamma_0/\sqrt{p}$ is bounded.
		
		It's straightforward to verify that $(A3')$ is satisfied for~$p$ large enough.
		Let $a \in (0, [\delta/(1+\delta)] \wedge (2/3) )$ be a constant.
		One can show that the following holds for sufficiently large~$p$:
		\[
		\frac{\log(2L+1)}{\log(2L+1)-\log \gamma} < a < \frac{-\log[(\lambda p^{\delta/2} + 2L + 1)/(p^{\delta/2}+1)]}{\log(\gamma_0 \vee 1) - \log [(\lambda p^{\delta/2} + 2L + 1)/(p^{\delta/2}+1)]} \,.
		\]
		By Proposition~\ref{pro:general}, for large enough~$p$,
		\[
		W_{\psi}(\delta_x P^m, \Pi) \leq 2 p \left( \frac{(\lambda + 1) V(\eta) + L + 1 }{1 - \rho_a} \right) \rho_a^m \,, \quad \eta \in \mathbb{R}^{p+1} , \, m \in \mathbb{Z}_+ \,,
		\]
		where
		\[
		\rho_a =  \gamma^a (2L+1)^{1-a}  \vee  \gamma_0^a \left( \frac{\lambda p^{\delta/2} + 2L + 1}{p^{\delta/2} + 1} \right)^{1-a}  .
		\]
		It's easy to verify that, as $p \to \infty$, $\rho_a$ goes to~$0$.
		It follows that, for~$p$ large enough,
		\[
		2p \left( \frac{(\lambda + 1) V(\eta) + L + 1 }{1 - \rho_a} \right) \leq 4p [V(\eta)+L_0] \,, \quad \eta \in \mathbb{R}^{p+1},
		\]
		where $L_0$ is a positive constant.
		This completes the proof.
	\end{proof}

	\noindent{\bf Acknowledgment.}\, The second author was supported by NSF Grant DMS-15-11945.

	\bibliographystyle{ims} \bibliography{qinbib}
	
	\vspace{1cm}

	\noindent{\LARGE \bf Appendix}
	\appendix

	\section{Convergence Bound for Autoregressive Chain Using Multi-Step Drift and Minorization} \label{app:multi}
	
	As in Proposition~\ref{pro:gaussian-rosen}, let $K(x,\cdot), \, x \in \mathbb{R}^p,$ be the probability measure associated with the $\mbox{N}(x/2,3I_p/4)$ distribution.
	Denote by~$\Pi$ its stationary measure.
	
	We will show that a $p$-step version of Proposition~\ref{pro:rosen} can result in a well-behaved bound on $\rho_*^{\tiny\mbox{TV}}$ for the above chain as $p \to \infty$.
	
	One can verify that for $x, x' \in \mathbb{R}^p$, the $p$-step transition density of this chain is
	\[
	\frac{K^p(x,\df x')}{\df x'} = \left[ \frac{4^p}{2\pi (4^p-1)} \right]^{p/2} \exp \left( -\frac{4^p}{2(4^p-1)} \left\| x' - \frac{x}{2^p} \right\|_2^2 \right) \,,
	\]
	where $\|\cdot\|_2$ is the Euclidean norm.
	
	\pcite{rosenthal1995minorization} Theorem~5, which is a generalization of Proposition~\ref{pro:rosen}, provides a way of constructing convergence bounds based on multi-step minorization and {\it single-step} drift conditions.
	It turns out that, at least for this example, it is more straightforward to work with a multi-step drift condition.
	Let $V(x) = \|x\|_2^2/p$ for $x \in \mathbb{R}^p$.
	Then
	\[
	\int_{\mathbb{R}^p} V(x') K^p(x,\df x') \leq 4^{-p} V(x) + 1 \,.
	\]
	That is, for the Markov transition function (Mtf) $K^p$, Condition $(A1)$ is satisfied with $\lambda = 4^{-p}$ and $L = 1$.
	
	Let~$p$ be large enough so that $d:= 4^{p/3} > 2/(1-4^{-p})$.
	Let $C = \{ x \in\mathbb{R}^p : \, V(x) \leq d \}$.
	Then for $x \in C$ and $x' \in \mathbb{R}^p$,
	\[
	\begin{aligned}
		&K^p(x,\df x')/\df x' \\
		&\geq \left[ \frac{4^p}{2\pi (4^p-1)} \right]^{p/2} \\
		& \quad \times \inf_{x \in C} \exp \left\{ -\frac{4^p}{2(4^p-1)} \left[ (1+4^{-p/3}) \|x'\|_2^2 + (1+4^{p/3}) \left\| \frac{x}{2^p} \right\|_2^2 \right] \right\} \\
		& \geq (1+4^{-p/3})^{-p/2} \exp \left[ - \frac{p4^{p/3}(1+4^{p/3})}{2(4^p-1)} \right] \nu(x') \,,
	\end{aligned}
	\]
	where
	\[
	\nu(x') = \left[ \frac{4^p(1+4^{-p/3})}{2\pi (4^p-1)} \right]^{p/2} \exp \left[ - \frac{4^p(1+4^{-p/3})}{2(4^p-1)} \|x'\|_2^2 \right]
	\]
	is a probability density function.
	It follows that for the Mtf $K^p$, $(A2)$ holds with
	\begin{equation} \label{eq:gaussgamma}
		\gamma = 1 - (1+4^{-p/3})^{-p/2} \exp \left[ - \frac{p4^{p/3}(1+4^{p/3})}{2(4^p-1)} \right].
	\end{equation}
	Applying Proposition~\ref{pro:rosen} to the $p$-step Mtf shows that there exists $c: \mathbb{R}^p \to [0,\infty)$ such that, for integer $m \geq 0$ and $x \in \mathbb{R}^p$,
	\[
	\|K_x^{pm} - \Pi\|_{\tiny\mbox{TV}} \leq c(x) \hat{\rho}_{(p)}^m \,,
	\]
	where
	\[
	\hat{\rho}_{(p)} = \gamma^{1/2} \vee \Bigg\{ \left(
	\frac{1+2L+\lambda d}{1 + d} \right)^{1/2} \left[ 1 + 2(\lambda d +
	L) \right]^{1/2} \Bigg\} \,.
	\]
	This implies that $\hat{\rho}_{(p)}^{1/p}$ is an upper bound on $\rho_*^{\tiny \mbox{TV}}$ for the single-step Mtf~$K$.
	The following proposition shows that this upper bound is well-behaved as $p \to \infty$.
	\begin{proposition}
		As $p \to \infty$,
		$
		\limsup\limits_{p \to \infty} \hat{\rho}_{(p)}^{1/p} < 1 \,.
		$
	\end{proposition}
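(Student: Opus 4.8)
The plan is to exploit the fact that the $p$th-root operation is increasing and commutes with maxima, so that
\[
\hat{\rho}_{(p)}^{1/p} = \gamma^{1/(2p)} \vee T_p^{1/p}, \qquad T_p := \left( \frac{1+2L+\lambda d}{1+d} \right)^{1/2} \left[ 1 + 2(\lambda d + L) \right]^{1/2},
\]
where, for the chain at hand, $\lambda = 4^{-p}$, $L = 1$, and $d = 4^{p/3}$. I would then bound the two terms separately and show that each converges to $4^{-1/6} = 2^{-1/3} < 1$ as $p \to \infty$. Since a maximum of two sequences that are dominated by sequences converging to a common limit has its $\limsup$ dominated by that limit, this suffices.

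For $T_p$ the analysis is routine. Because $\lambda d = 4^{-2p/3} \to 0$, both $1+2L+\lambda d$ and $1+2(\lambda d + L)$ tend to $3$, while $1 + d = 1 + 4^{p/3} \to \infty$. Hence, for $p$ large enough, $T_p^2 \le \frac{4}{4^{p/3}}\cdot 4 = 16 \cdot 4^{-p/3}$, so $T_p \le 4 \cdot 4^{-p/6}$ and $T_p^{1/p} = 4^{1/p}\, 4^{-1/6} \to 4^{-1/6}$.

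The main obstacle is the first term, since $\gamma$ has the form $1 - \epsilon_p$ and one must show that $\gamma \to 0$ exponentially in $p$ at exactly the right rate. The key is to lower-bound $\epsilon_p$ cleanly. Using $\log(1+x) \le x$ gives $(1+4^{-p/3})^{-p/2} \ge \exp(-\tfrac{p}{2} 4^{-p/3})$, and the bounds $4^p - 1 \ge \tfrac{1}{2} 4^p$ together with $4^{p/3}(1+4^{p/3}) \le 2 \cdot 4^{2p/3}$ control the exponent in~\eqref{eq:gaussgamma} from below by $-2p \cdot 4^{-p/3}$. Combining these yields $\epsilon_p \ge \exp(-\tfrac{5p}{2} 4^{-p/3})$, whence $\gamma \le 1 - \exp(-\tfrac{5p}{2} 4^{-p/3}) \le \tfrac{5p}{2} 4^{-p/3}$ by the elementary inequality $1 - e^{-x} \le x$. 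Therefore $\gamma^{1/(2p)} \le (\tfrac{5p}{2})^{1/(2p)}\, 4^{-1/6} \to 4^{-1/6}$, since $(\tfrac{5p}{2})^{1/(2p)} \to 1$.

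Finally, since both $\gamma^{1/(2p)}$ and $T_p^{1/p}$ are dominated by sequences tending to $4^{-1/6}$, I would conclude
\[
\limsup_{p\to\infty} \hat{\rho}_{(p)}^{1/p} \le 4^{-1/6} = 2^{-1/3} < 1,
\]
as claimed. The crux is recognizing that the delicate quantity $\gamma$, despite converging to $0$, does so only polynomially faster than $4^{-p/3}$, so that its $(2p)$th root stabilizes at a constant strictly below~$1$; obtaining the clean two-sided control of the exponent $p\,4^{p/3}(1+4^{p/3})/(4^p-1)$ is where all of the care is required.
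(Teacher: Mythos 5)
Your proof is correct, and its skeleton---writing $\hat{\rho}_{(p)}^{1/p} = \gamma^{1/(2p)} \vee T_p^{1/p}$ and showing each piece has limit superior at most $4^{-1/6}$---is the same as the paper's. The genuine difference is in how the decay of $\gamma$ is established, which is the crux of the result. The paper verifies the two-sided asymptotic expansions
$(1+4^{-p/3})^{-p/2} = 1 - \tfrac{p}{2}4^{-p/3} + o(p\,4^{-p/3})$ and
$\exp\!\left[-p4^{p/3}(1+4^{p/3})/(2(4^p-1))\right] = 1 - \tfrac{p}{2}4^{-p/3} + o(p\,4^{-p/3})$,
so that $\gamma = p\,4^{-p/3}(1+o(1))$ and $\lim_{p\to\infty}\gamma^{1/p} = 4^{-1/3}$ exactly, while the product term is dismissed as ``easy to prove.'' You instead use one-sided elementary inequalities ($\log(1+x)\le x$, $1-e^{-x}\le x$, $4^p - 1 \ge 4^p/2$, $1+4^{p/3}\le 2\cdot 4^{p/3}$) to obtain the explicit non-asymptotic bound $\gamma \le \tfrac{5p}{2}4^{-p/3}$, which suffices because only an upper bound on the limit superior is needed. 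What each approach buys: yours is fully self-contained (no $o(\cdot)$ bookkeeping to justify, and you actually spell out the $T_p$ estimate the paper omits), whereas the paper's two-sided expansion pins down the exact limit $\gamma^{1/p}\to 4^{-1/3}$, identifying $4^{-1/6}$ as the true limit of $\hat{\rho}_{(p)}^{1/p}$ rather than merely an upper bound. Two cosmetic points: the displayed equality $T_p^{1/p} = 4^{1/p}\,4^{-1/6}$ should be the inequality $T_p^{1/p} \le 4^{1/p}\,4^{-1/6}$, and you should note (as the paper does) that the formula for $\hat{\rho}_{(p)}$ presupposes $p$ large enough that $d = 4^{p/3} > 2/(1-4^{-p})$; neither affects your limsup argument.
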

	\begin{proof}
		It suffices to show that $\limsup_{p \to \infty} \gamma^{1/p} < 1$, and that
		\[
		\limsup_{p \to \infty} \Bigg\{ \left(
		\frac{1+2L+\lambda d}{1 + d} \right)^{1/p} \left[ 1 + 2(\lambda d +
		L) \right]^{1/p} \Bigg\} < 1 \,,
		\]
		where $\gamma$ is given by~\eqref{eq:gaussgamma}, $\lambda = 4^{-p}$, $L = 1$, and $d = 4^{p/3}$.
		The second inequality is easy to prove, so we focus on the first.
		It's easy to verify that 
		\[
		(1+4^{-p/3})^{-p/2} = 1 - \frac{p}{2} 4^{-p/3} + o(p 4^{-p/3}) \,,
		\]
		and
		\[
		\exp \left[ - \frac{p4^{p/3}(1+4^{p/3})}{2(4^p-1)} \right] = 1 - \frac{p}{2} 4^{-p/3} + o(p 4^{-p/3}) \,.
		\]
		Therefore, $\lim_{p \to \infty} \gamma^{1/p} = 4^{-1/3}$, and the proof is complete.
	\end{proof}

	\section{Results Concerning the Random Mapping for Albert and Chib's chain} \label{app:randomac}
	
	Recall that $s: (0,1) \times \mathbb{R} \rightarrow \mathbb{R}$ is defined
	as follows:
	\[
	s(u,\mu) = 1 - \frac{u \phi(\mu)}{\phi[\Phi^{-1}(\Phi(\mu)u)]} \,,
	\]
	where $\Phi$ and $\phi$ are, respectively, the distribution and density functions of the standard normal distribution.
	
	\begin{lemma} 
		\label{lem:s}
		For $u \in (0,1)$ and $\mu \in \mathbb{R}$, $s(u,\mu) \in
		(0,1)$. Moreover, if $\mu \leq 0$, $s(u,\mu) \leq 1-u$.
	\end{lemma}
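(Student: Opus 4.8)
The plan is to introduce the auxiliary point $w = \Phi^{-1}(\Phi(\mu)u)$, so that $\Phi(w) = u\,\Phi(\mu)$ and $s(u,\mu) = 1 - u\phi(\mu)/\phi(w)$. Since $u \in (0,1)$ and $\Phi(\mu) > 0$, we have $\Phi(w) = u\Phi(\mu) < \Phi(\mu)$, and strict monotonicity of $\Phi$ forces $w < \mu$. The bound $s(u,\mu) < 1$ is then immediate because $u$, $\phi(\mu)$, and $\phi(w)$ are all strictly positive. For the lower bound $s(u,\mu) > 0$, I would substitute $u = \Phi(w)/\Phi(\mu)$ to recast the requirement $u\phi(\mu) < \phi(w)$ as $\phi(\mu)/\Phi(\mu) < \phi(w)/\Phi(w)$. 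Thus both halves of the claim $s(u,\mu) \in (0,1)$ reduce to showing that the reverse hazard rate $h(x) := \phi(x)/\Phi(x)$ is strictly decreasing, evaluated at the two points $w < \mu$.

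The crux is establishing that $h$ is strictly decreasing on $\mathbb{R}$. Differentiating and using $\phi'(x) = -x\phi(x)$ gives $h'(x) = -\phi(x)\big[x\Phi(x) + \phi(x)\big]/\Phi(x)^2$, so it suffices to show $g(x) := x\Phi(x) + \phi(x) > 0$ for all $x$. I would observe that $g'(x) = \Phi(x) > 0$ together with $\lim_{x \to -\infty} g(x) = 0$ (both summands vanish, using $\Phi(x) \sim \phi(x)/|x|$ to handle the product $x\Phi(x)$); equivalently, $g(x) = \int_{-\infty}^x \Phi(t)\,\df t$, which is manifestly positive. This yields $h'(x) < 0$ everywhere, hence $h(w) > h(\mu)$ since $w < \mu$, completing the argument that $s(u,\mu) \in (0,1)$.

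For the refinement when $\mu \leq 0$, the inequality $s(u,\mu) \leq 1 - u$ is equivalent, after canceling the positive factor $u$, to $\phi(\mu) \geq \phi(w)$. Here I would invoke only the shape of $\phi$: since $w < \mu \leq 0$ and $\phi$ is strictly increasing on $(-\infty,0]$, we get $\phi(w) < \phi(\mu)$, which delivers the bound (in fact strictly). I expect the monotonicity of $h$ in the second paragraph to be the sole genuine obstacle; everything else is bookkeeping around the definition of $w$.
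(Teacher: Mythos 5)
Your proof is correct and follows essentially the same route as the paper's: both reduce $s(u,\mu)\in(0,1)$ to the strict monotonicity of the reverse hazard rate $\phi/\Phi$ (comparing it at $w=\Phi^{-1}(\Phi(\mu)u)<\mu$), which rests on the inequality $x\Phi(x)+\phi(x)>0$, and both handle the $\mu\le 0$ refinement by noting $\phi$ is increasing on $(-\infty,0]$. The only difference is that where the paper cites \citet{gordon1941values} for this Mills-ratio inequality, you prove it directly via $x\Phi(x)+\phi(x)=\int_{-\infty}^{x}\Phi(t)\,\df t>0$, making the argument self-contained.
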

	\begin{proof}
		We first prove that $s(u,\mu) \in (0,1)$, where $u \in (0,1)$ and $\mu \in \mathbb{R}$. It suffices to show that
		\begin{equation} \label{eq:lem:s-1}
			\frac{u\phi(\mu)}{\phi[\Phi^{-1}(\Phi(\mu)u)]} < 1 \;.
		\end{equation}
		To this end, let $x = H(1-u,\mu,1) > 0$, that is,
		\[
		u = \frac{1 - \Phi(x-\mu)}{\Phi(\mu)} \;.
		\]
		Then~\eqref{eq:lem:s-1} is equivalent to
		\[
		\log \phi(\mu) - \log \Phi(\mu) < \log \phi(\mu-x) - \log \Phi(\mu-x) \;.
		\]
		This inequality holds if for $x' > 0$,
		\[
		\frac{\df}{\df x'} \left[ \log \phi(\mu-x') - \log \Phi(\mu-x') \right] > 0 \;,
		\]
		which is equivalent to
		\[
		\mu - x' + \frac{\phi(\mu-x')}{\Phi(\mu-x')} > 0 \;.
		\]
		By a well-known result on Mill's ratio \citep{gordon1941values},
		\[
		x' + \frac{\phi(x')}{\Phi(x')} > 0
		\]
		for each $x' \in \mathbb{R}$.
		Hence,~\eqref{eq:lem:s-1} holds and the result follows.
		
		On to the second assertion. Let $\mu < 0$ and $u \in (0,1)$. It suffices to show that
		\begin{equation} \label{eq:lem:s-2}
			\frac{\phi(\mu)}{\phi[ \Phi^{-1}(\Phi(\mu)u) ]} \geq 1 \;.
		\end{equation}
		It's clear that $\Phi^{-1}(\Phi(\mu)u) \leq \mu$. Then~\eqref{eq:lem:s-2} holds because $\phi(\cdot)$ is an increasing function on $(-\infty,0)$.
	\end{proof}
	
	Recall that $X \in \mathbb{R}^{n \times p}$ is a design matrix whose $i$th row is $x_i^T$, and $y = (y_1,y_2,\dots,y_n)^T$ is a binary response vector. 
	Moreover, $\Sigma = X^TX + Q$ is nonsingular, while $Q \in \mathbb{R}^{p \times p}$ is either vanishing or positive-definite.
	For $\beta \in \mathbb{R}^p$ and $\tilde{u} = (u_1,u_2,\dots,u_n)^T \in (0,1)^n$, $S(\beta,\tilde{u})$ is a diagonal matrix whose $ii$th element is
	\[
	\frac{\partial}{\partial \mu} H(u_i, \mu, y_i) \bigg|_{\mu = x_i^T\beta}  = s(1-u_i,x_i^T\beta) 1_{\{1\}}(y_i) + s(u,-x_i^T\beta) 1_{\{0\}}(y_i) \,,
	\]
	where $H(\cdot,\cdot,\cdot)$ is defined in Subsection~\ref{ssec:prel}.
	The next result appeared in Subsection~\ref{ssec:random_map}.
	
	\vspace{3mm}
	{\sc Lemma~\ref{lem:lambda<1}}. \;
	{\it For any $\beta \in \mathbb{R}^p$ and $\tilde{u} \in (0,1)^n$,
		\[
		\lambda_{\max} \left( \Sigma^{-1/2} X^T S(\beta, \tilde{u}) X
		\Sigma^{-1/2} \right) < 1\;.
		\]}
	\vspace{-6mm}
	\begin{proof}
		By Lemma~\ref{lem:s}, all the diagonal components of $S(\beta,
		\tilde{u})$ are strictly less than~$1$.  For two Hermitian matrices
		of the same size, $M_1$ and $M_2$, write $M_1 \succcurlyeq M_2$, or
		equivalently, $M_2 \preccurlyeq M_1$ if $M_1 - M_2$ is non-negative
		definite.  Similarly, write $M_2 \prec M_1$ if $M_1 - M_2$ is
		positive definite.  We consider two cases: (i) $Q=0$, and (ii) $Q$
		is positive definite.  If $Q = 0$, then $\Sigma = X^TX$, which is
		positive definite (because of propriety).  It follows that
		\[
		\Sigma^{-1/2} X^T S(\beta, \tilde{u}) X \Sigma^{-1/2} \prec
		\Sigma^{-1/2} X^T X \Sigma^{-1/2} = I_p \;.
		\]
		If~$Q$ is positive definite, then
		\[
		\Sigma^{-1/2} X^T S(\beta, \tilde{u}) X \Sigma^{-1/2}
		\preccurlyeq \Sigma^{-1/2} X^T X \Sigma^{-1/2} \prec I_p \;.
		\]
		Thus, in either case, $\Sigma^{-1/2} X^T S(\beta, \tilde{u}) X
		\Sigma^{-1/2} \prec I_p$, and the result follows.
	\end{proof}

	We now develop an approximation of $
	\mathbb{E} \lambda_{\max} \left( \Sigma^{-1/2} X^T \; S(\beta, U) \; X
	\Sigma^{-1/2} \right)$ using
	\[
	\hat{\gamma}(\beta) = \lambda_{\max} \left( \Sigma^{-1/2} X^T \;
	\mathbb{E} S(\beta, U) \; X \Sigma^{-1/2} \right) \;,
	\]
	which is a much more tractable quantity. 
	The following lemma appeared in Subsection~\ref{ssec:random_map}.
	
	\vspace{3mm}
	{\sc Lemma~\ref{lem:approx}} \;
	{\it For $\beta \in \mathbb{R}^p$, $\mathbb{E} \lambda_{\max} \left( \Sigma^{-1/2} X^T \; S(\beta, U) \; X
		\Sigma^{-1/2} \right)$ is
		bounded above by $\hat{\gamma}(\beta) + \sigma (2 \log p)^{1/2}$,
		where
		\[
		\sigma = \lambda_{\max}^{1/2} \left[\sum_{i=1}^{n}
		(\Sigma^{-1/2}x_ix_i^T\Sigma^{-1/2})^2\right] \;.
		\]
	}
	\vspace{-3mm}
	
	To prove this result, we invoke the matrix Hoeffding inequalities from \cite{mackey2014matrix}, which is stated in the following lemma.
	\begin{lemma} \label{lem:hoeffding} \citep{mackey2014matrix}
		Let $\mathbb{H}^p$ be the set of $p \times p$ complex Hermitian matrices. Let~$m$ be a positive integer. Consider a sequence of independent random matrices in $\mathbb{H}^p$, $\{M_i\}_{i=1}^m$, and a sequence of deterministic matrices in $\mathbb{H}^p$, $\{A_i\}_{i=1}^m$. 
		Suppose that $\mathbb{E} M_i = 0$ and $M_i^2 \preccurlyeq A_i^2$ almost surely for each~$i$. Then
		\[
		\mathbb{E} \lambda_{\max} \left( \sum_{i=1}^{m} M_i \right) \leq \sigma' \sqrt{2 \log p} \;,
		\]
		where $\sigma' = \lambda_{\max}^{1/2}(\sum_{i=1}^{m} A_i^2)$.
	\end{lemma}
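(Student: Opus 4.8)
The plan is to use the matrix Laplace transform method. Fix $\theta > 0$ and set $S = \sum_{i=1}^m M_i$. I would first pass from the extreme eigenvalue to the trace exponential via the elementary bound
\[
e^{\theta \lambda_{\max}(S)} = \lambda_{\max}\!\big(e^{\theta S}\big) \le \operatorname{tr} \exp(\theta S)\,,
\]
which holds because $e^{\theta S}$ is positive definite, so its largest eigenvalue is dominated by the sum of all of them. Taking logarithms, dividing by $\theta$, taking expectations, and applying Jensen's inequality to the concave map $\log$ gives
\[
\mathbb{E}\, \lambda_{\max}(S) \le \frac{1}{\theta} \log \mathbb{E} \operatorname{tr} \exp(\theta S)\,.
\]
This reduces the problem to bounding the matrix moment generating function $\mathbb{E}\operatorname{tr}\exp(\theta S)$.

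For that, I would invoke the subadditivity of the matrix cumulant generating function --- a consequence of Lieb's concavity theorem --- which for independent summands yields
\[
\mathbb{E} \operatorname{tr} \exp(\theta S) \le \operatorname{tr} \exp\!\Big( \sum_{i=1}^m \log \mathbb{E}\, e^{\theta M_i} \Big)\,.
\]
The decisive ingredient is the matrix Hoeffding bound on each cumulant generating function: under $\mathbb{E} M_i = 0$ and $M_i^2 \preccurlyeq A_i^2$, one has $\log \mathbb{E}\, e^{\theta M_i} \preccurlyeq \tfrac{\theta^2}{2} A_i^2$. Granting this and using that $B \mapsto \operatorname{tr}\exp(B)$ is monotone with respect to $\preccurlyeq$, together with $\operatorname{tr} e^{B} \le p\, e^{\lambda_{\max}(B)}$, I obtain
\[
\mathbb{E}\operatorname{tr}\exp(\theta S) \le \operatorname{tr}\exp\!\Big(\tfrac{\theta^2}{2}\sum_{i=1}^m A_i^2\Big) \le p\, \exp\!\Big(\tfrac{\theta^2}{2}\lambda_{\max}\big(\textstyle\sum_i A_i^2\big)\Big) = p\, e^{\theta^2 \sigma'^2/2}\,.
\]

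Combining the two displays gives, for every $\theta > 0$,
\[
\mathbb{E}\, \lambda_{\max}(S) \le \frac{\log p}{\theta} + \frac{\theta \sigma'^2}{2}\,,
\]
and minimizing the right-hand side over $\theta$ (the optimum is $\theta = \sqrt{2\log p}/\sigma'$, or one simply applies the arithmetic--geometric mean inequality) produces exactly $\mathbb{E}\,\lambda_{\max}(S) \le \sigma'\sqrt{2\log p}$, as claimed.

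The main obstacle is the matrix Hoeffding cumulant bound $\log \mathbb{E}\, e^{\theta M} \preccurlyeq \tfrac{\theta^2}{2} A^2$ (equivalently $\mathbb{E}\, e^{\theta M} \preccurlyeq e^{\theta^2 A^2/2}$, the two being interchangeable since $\log$ is operator monotone). Its scalar analogue is immediate from the convexity of $x \mapsto e^{\theta x}$ on a bounded interval, but the matrix statement is delicate precisely because $\exp$ is \emph{not} operator monotone, so the hypothesis $M^2 \preccurlyeq A^2$ cannot simply be pushed through the exponential. This step is the matrix analogue of Hoeffding's lemma; I would establish it either through the operator-convexity arguments used in the matrix-Chernoff literature, or --- following the cited source --- through the method of exchangeable pairs of \cite{mackey2014matrix}, which derives the requisite bound on $\mathbb{E}\operatorname{tr}\exp(\theta S)$ directly from a matrix Stein pair and thereby bypasses Lieb's theorem altogether.
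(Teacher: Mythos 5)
The paper never proves this lemma: it is stated verbatim with a citation to \cite{mackey2014matrix}, so the honest comparison is with the cited source rather than with any in-paper argument. Your Laplace-transform skeleton is the standard machinery and is executed correctly as far as it goes: the spectral-mapping step $e^{\theta\lambda_{\max}(S)} = \lambda_{\max}(e^{\theta S}) \le \operatorname{tr}\exp(\theta S)$, Jensen applied to $\log$, Lieb-based subadditivity of the matrix cumulant generating function, the bound $\operatorname{tr}e^{B} \le p\,e^{\lambda_{\max}(B)}$, and the optimization $\inf_{\theta>0}\{\log p/\theta + \theta\sigma'^2/2\} = \sigma'\sqrt{2\log p}$ are all sound. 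The entire weight of the proof therefore rests on the per-summand semidefinite bound $\log \mathbb{E}\,e^{\theta M_i} \preccurlyeq \tfrac{\theta^2}{2}A_i^2$, which you correctly identify as the crux --- but your first proposed route to it does not exist. Under only $\mathbb{E}M=0$ and $M^2 \preccurlyeq A^2$, the matrix-Chernoff literature does \emph{not} supply this bound with constant $1/2$: Tropp's classical Laplace/Lieb treatment of matrix Hoeffding proves a strictly weaker per-summand bound, yielding the tail $p\exp\bigl(-t^2/(8\sigma'^2)\bigr)$ and hence an expectation bound of order $2\sigma'\sqrt{2\log p}$, off the stated constant by a factor of $2$. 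The obstruction is the one you yourself flag: since $B \preccurlyeq C$ does not imply $B^{2k} \preccurlyeq C^{2k}$ and neither $\exp$ nor $x \mapsto \cosh\sqrt{x}$ is operator monotone, the scalar Hoeffding-lemma argument cannot be pushed through the hypothesis $M^2 \preccurlyeq A^2$ termwise, and no ``operator-convexity argument'' repairing this with constant $1/2$ is known. The sharp constant $\sqrt{2\log p}$ is precisely the contribution of \cite{mackey2014matrix}, whose exchangeable-pairs method bounds $\mathbb{E}\operatorname{tr}\exp(\theta S)$ through a differential inequality with the variance proxy $\tfrac{1}{2}\lambda_{\max}\bigl(\sum_i (A_i^2 + \mathbb{E}M_i^2)\bigr) \le \lambda_{\max}\bigl(\sum_i A_i^2\bigr)$, using no per-summand semidefinite mgf bound at all; so of your two fallbacks only the second is viable, and it amounts to citing the source --- which is exactly what the paper does.

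One mitigating observation: in the paper's sole application of this lemma (the proof of Lemma~\ref{lem:approx}), each summand has the special form $M_i = \Delta_i A_i$ with $\Delta_i$ a mean-zero scalar in $[-1,1]$ and $A_i = \Sigma^{-1/2}x_ix_i^T\Sigma^{-1/2}$ fixed. There $\mathbb{E}\,e^{\theta \Delta_i A_i}$ is a function of the single matrix $A_i$, so the scalar Hoeffding lemma applies spectrally eigenvalue by eigenvalue and gives $\mathbb{E}\,e^{\theta M_i} \preccurlyeq e^{\theta^2 A_i^2/2}$ legitimately; your Lieb-based proof then goes through in full with the stated constant. So your argument is a complete and correct proof of the lemma as the paper actually uses it, but not of the lemma at its stated generality, where the constant-$1/2$ step is a genuine gap that only the exchangeable-pairs machinery of the cited source is known to close.
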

	
	\begin{proof} [Proof of Lemma~\ref{lem:approx}]
		For $i = 1,2,\dots,n$,
		let
		\[
		\Delta_i(\beta,U) = \frac{\partial}{\partial{\mu}} H(U_i,\mu,y_i) \bigg|_{\mu = x_i^T\beta} - \mathbb{E} \frac{\partial}{\partial{\mu}} H(U_i,\mu,y_i) \bigg|_{\mu = x_i^T\beta}  \;, \; \beta \in \mathbb{R}^p \;.
		\]
		Then for $\beta \in \mathbb{R}^p$,
		\begin{equation} \label{eq:approx-error}
			\begin{aligned}
				&\Sigma^{-1/2} X^T \, S(\beta,U) \, X \Sigma^{-1/2} - \Sigma^{-1/2}X^T \, \mathbb{E} S(\beta,U) \, X \Sigma^{-1/2} \\
				&= \sum_{i=1}^{n} \Sigma^{-1/2} x_i \, \Delta_i(\beta,U) \, x_i^T \Sigma^{-1/2} \;.
			\end{aligned}
		\end{equation}
		For each~$i$, define $M_i$ to be
		\[
		\Sigma^{-1/2} x_i \, \Delta_i(\beta, U) \, x_i^T \Sigma^{-1/2} \;,
		\]
		and let $A_i = \Sigma^{-1/2} x_i x_i^T \Sigma^{-1/2}$.  Then $\mathbb{E}M_i = 0$ for all~$i$.
		We now show that $M_i^2 \preccurlyeq A_i^2$ for $i = 1,2,\dots,n$.
		We know from Lemma~\ref{lem:s} that $\Delta_i(\beta,U) \in [-1,1]$ for all~$i$ and~$\beta$, which implies that
		\begin{align*}
			\left( \Sigma^{-1/2} x_i \, \Delta_i(\beta, U) \, x_i^T \Sigma^{-1/2} \right)^2 &= \left( x_i^T\Sigma^{-1}x_i \right) \left( \Sigma^{-1/2} x_i \, \Delta_i^2(\beta, U) \, x_i^T \Sigma^{-1/2} \right) \\
			&  \preccurlyeq \left( x_i^T\Sigma^{-1}x_i \right) \left( \Sigma^{-1/2} x_i x_i^T \Sigma^{-1/2} \right) \\
			&  = \left( \Sigma^{-1/2}x_ix_i^T\Sigma^{-1/2} \right)^2 \;.
		\end{align*}
		Then by Lemma~\ref{lem:hoeffding}, 
		\[
		\mathbb{E} \lambda_{\max} \left( \sum_{i=1}^{n} \Sigma^{-1/2} x_i \, \Delta_i(\beta, U) \, x_i^T \Sigma^{-1/2} \right)  \leq \sigma \sqrt{2 \log p} \;.
		\]
		The result then follows immediately from~\eqref{eq:approx-error} and Weyl's inequaltiy \citep[][Theorem 4.3.1]{horn2012matrix}.
	\end{proof}
	
	For $\beta \in \mathbb{R}^p$, $\mathbb{E}S(\beta,U)$ is an $n \times
	n$ diagonal matrix whose $i$th diagonal element is $\int_0^1
	s(u,x_i^T\beta) \, \df u$ if $y_i = 1$, and $\int_0^1 s(u,-x_i^T\beta)
	\, \df u$ is $y_i = 0$.  A bit of calculation reveals that for $\mu
	\in \mathbb{R}$, we have
	\[
	\int_0^1 s(u,\mu) \, \df u = 1 - \frac{\phi(\mu)}{\Phi^2(\mu)}
	\int_{-\infty}^{\mu} \Phi(x) \, \df x \;.
	\]
	The following result, which is a direct consequence of
	Lemma~\ref{lem:s}, regulates the behavior of the diagonal elements of $\mathbb{E}S(\beta,U)$.
	\begin{lemma} 
		\label{lem:sint}
		For $\mu \in \mathbb{R}$, $\int_0^1 s(u,\mu) \, \df u \in
		(0,1)$. Moreover, if $\mu \leq 0$, then $\int_0^1 s(u,\mu) \, \df u
		\in (0,1/2]$.
	\end{lemma}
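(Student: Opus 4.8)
The plan is to obtain both assertions directly by integrating in~$u$ the pointwise bounds on $s(u,\mu)$ already furnished by Lemma~\ref{lem:s}; no analytic input beyond that lemma is required. In particular, I would \emph{not} work with the closed form $\int_0^1 s(u,\mu)\,\df u = 1 - \phi(\mu)\Phi^{-2}(\mu)\int_{-\infty}^{\mu}\Phi(x)\,\df x$, since the desired bounds emerge more transparently from the pointwise estimates.

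For the first assertion, recall that Lemma~\ref{lem:s} gives $s(u,\mu) \in (0,1)$ for every $u \in (0,1)$ and every $\mu \in \mathbb{R}$. In particular $s(\cdot,\mu)$ is bounded, hence integrable, on $(0,1)$, so there is no difficulty with the (seemingly singular) behavior of the integrand as $u \to 0^+$ or $u \to 1^-$. Since $s(u,\mu) > 0$ throughout $(0,1)$, integration yields $\int_0^1 s(u,\mu)\,\df u > 0$; and since $1 - s(u,\mu) > 0$ throughout $(0,1)$, integrating $1-s$ gives $\int_0^1 s(u,\mu)\,\df u < 1$. Combining these shows $\int_0^1 s(u,\mu)\,\df u \in (0,1)$.

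For the second assertion, suppose $\mu \leq 0$. Then the refined bound in Lemma~\ref{lem:s}, namely $s(u,\mu) \leq 1-u$, integrates to
\[
\int_0^1 s(u,\mu)\,\df u \leq \int_0^1 (1-u)\,\df u = \frac{1}{2} \,.
\]
Together with the strict positivity established in the first part, this gives $\int_0^1 s(u,\mu)\,\df u \in (0,1/2]$, completing the argument.

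There is no substantive obstacle here: the lemma is a straightforward corollary of Lemma~\ref{lem:s}. The only minor point to watch is that the pointwise bound $s(u,\mu) \in (0,1)$ is invoked twice---once to guarantee integrability on $(0,1)$, and once (via strict positivity of both $s$ and $1-s$) to upgrade the integrated inequalities from weak to strict.
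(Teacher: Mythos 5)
Your proof is correct and takes exactly the route the paper intends: the paper offers no separate argument for Lemma~\ref{lem:sint}, stating only that it is ``a direct consequence of Lemma~\ref{lem:s},'' which is precisely your integration of the pointwise bounds $s(u,\mu)\in(0,1)$ and (for $\mu\leq 0$) $s(u,\mu)\leq 1-u$. Your added care about integrability and strictness of the integrated inequalities is a harmless elaboration of the same argument.
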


	\section{Proofs for Section~\ref{SEC:AC}} \label{app:ac}

	\subsection{Proposition~\ref{pro:tvbound}}
	We prove the result using Proposition~\ref{pro:madras}.
	\begin{proof}
		Recall that the density function for $K(\beta,\cdot), \; \beta \in \mathbb{R}^p$ is given by
		\[
		k(\beta, \tilde{\beta}) = \int_{\mathbb{R}^n} \pi_1(\tilde{\beta}|z,X,y) \pi_2(z|\beta,X,y) \, \df z \;, \; \tilde{\beta} \in \mathbb{R}^p \;,
		\]
		where $\pi_1$ and $\pi_2$ are introduced in Subsection~\ref{ssec:prel}.
		For $\beta \in \mathbb{R}^p$, let
		\[
		Z(\beta,U) = \left( H(U_1, x_1^T\beta, y_1) \;\, H(U_2, x_2^T\beta, y_2) \;  \dots \,\; H(U_n, x_n^T\beta, y_n) \right)^T \;,
		\]
		where~$U$ and $H(\cdot,\cdot,\cdot)$ are defined in Subsection~\ref{ssec:prel}. Then $Z(\beta, U) \sim \pi_2(\cdot|\beta,X,y)$, and
		\[
		k(\beta, \tilde{\beta}) = \mathbb{E} \pi_1(\tilde{\beta}|Z(\beta,U), X, y) \;, \; \beta, \,\tilde{\beta} \in \mathbb{R}^p \;.
		\]
		As a result, for $\beta,\beta' \in \mathbb{R}^p$,
		\begin{equation} \label{ine:madras-proof}
			\begin{aligned}
				& \int_{\mathbb{R}^p} |k(\beta,\tilde{\beta}) - k(\beta', \tilde{\beta})| \, \df \tilde{\beta} \\
				& = \int_{\mathbb{R}^p} |\mathbb{E} \pi_1(\tilde{\beta}|Z(\beta,U),X,y) - \mathbb{E} \pi_1(\tilde{\beta}|Z(\beta',U),X,y) | \, \df \tilde{\beta} \\
				& \leq \mathbb{E} \int_{\mathbb{R}^p} |\pi_1(\tilde{\beta}|Z(\beta,U),X,y) - \pi_1(\tilde{\beta}|Z(\beta',U),X,y) | \, \df \tilde{\beta} \;.
			\end{aligned}
		\end{equation}
		Now, for $z \in \mathbb{R}^n$, $\pi_1(\cdot|z,X,y)$ is the pdf of $\mbox{N}(\Sigma^{-1}(X^Tz + Qv), \Sigma^{-1})$, so the integral in the last line of~\eqref{ine:madras-proof} is twice the TV distance between two normal distributions. 
		By a standard result on the TV distance between normal distributions \citep[see, e.g.,][page~5]{devroye2018total}, the right hand side of~\eqref{ine:madras-proof} is equal to 
		\[
		\begin{aligned}
			&2 - 4 \mathbb{E} \Phi \left(-\frac{1}{2} \|\Sigma^{-1/2}X^T [Z(\beta,U) - Z(\beta',U)] \|_2 \right) \\
			&\leq  \frac{2}{\sqrt{2\pi}} \mathbb{E} \|\Sigma^{-1} X^T [Z(\beta,U) - Z(\beta',U)] \| \;.
		\end{aligned}
		\]
		By definition of the random mapping~$f$ defined in Subsection~\ref{ssec:prel},
		\[
		\Sigma^{-1} X^T [Z(\beta,U) - Z(\beta',U)] = f(\beta) - f(\beta') \;.
		\]
		By Lemmas~\ref{lem:diffcontract},~\ref{lem:acdiff}, and~\ref{lem:lambda<1}, $\mathbb{E}\|f(\beta) - f(\beta')\| \leq \|\beta - \beta'\|$. Thus,
		\[
		\int_{\mathbb{R}^p} |k(\beta,\tilde{\beta}) - k(\beta',\tilde{\beta})| \, \df \tilde{\beta} \leq  \frac{2}{\sqrt{2\pi}} \|\beta-\beta'\| \;,
		\]
		and the result follows from Proposition~\ref{pro:madras}.
	\end{proof}

	\subsection{Proposition~\ref{pro:acwasserbound}}
	
	\begin{proof}
		We simply make use of Proposition~\ref{pro:general}.
		Condition $(A1')$ is implied by Lemma~\ref{lem:drift} and the fact that, for each $\beta,\beta' \in \mathbb{R}^p$,
		\[
		\|\beta-\beta'\| \leq \|(\beta-\hat{\beta}) - (\beta'-\hat{\beta})\|^2 + 2p \leq 2p[V(\beta) + V(\beta') + 1] \,.
		\]  
		
		To establish $(A2')$, first note that, by Lemmas~\ref{lem:acdiff} and~\ref{lem:lambda<1}, for $\beta, \beta' \in \mathbb{R}^p$,
		\[
		\sup_{t \in [0,1]} \mathbb{E} \left\| \frac{\df}{\df t} f(\beta+t(\beta'-\beta)) \right\| \leq \|\beta'-\beta\| \,.
		\]
		Let $C
		= \{\beta \in \mathbb{R}^p: \|\beta - \hat{\beta}\|^2 \leq d
		\}$, and let
		\[
		\widehat{D}_{\beta}f = \lambda_{\max}\left( \Sigma^{-1/2} X^T
		\,S(\beta,U)\, X \Sigma^{-1/2} \right)
		\]
		for $\beta \in \mathbb{R}^p$.
		Note that, given~$U$, $\widehat{D}_{\beta}f$ is continuous in~$\beta$.
		By Lemma~\ref{lem:lambda<1} and the compactness of~$C$,
		\[
		\gamma = \sup_{\beta \in C} \mathbb{E} \widehat{D}_{\beta}f \leq
		\mathbb{E} \sup_{\beta \in C} \widehat{D}_{\beta}f < 1 \;.
		\]
		Note that~$C$ is convex.
		It follows from Lemma~\ref{lem:acdiff} that for $\beta, \beta' \in C$ and $t \in [0,1]$,
		\[
		\mathbb{E} \left\| \frac{\df}{\df t} f(\beta+t(\beta'-\beta)) \right\| \leq \left( \mathbb{E} \widehat{D}_{\beta+t(\beta'-\beta)}f \right) \|\beta'-\beta\|  \leq \gamma \|\beta'-\beta\| \,.
		\]
		Then $(A2')$ is satisfied with $\gamma_0 = 1$.
		This also implies that $(A3')$ holds.
		
		The result now follows from Proposition~\ref{pro:general}.
	\end{proof}

	\subsection{Proposition~\ref{pro:largen}}
	
	We first prove the following lemma.
	\begin{lemma} \label{lem:J}
		For $\delta \in [0,1]$ and $\alpha \in \mathbb{R}^p \setminus \{0\}$, let $J(\alpha, \delta) = \{\beta \in \mathbb{R}^p: \alpha^T \beta \geq \delta \|\alpha\|_2\|\beta\|_2\}$. Suppose that $\alpha \in J(\alpha_0, \sqrt{1-\delta^2})$ for some $\alpha_0$ and~$\delta$, and that $\beta \in J(\alpha_0, \delta)$. Then $\alpha^T \beta \geq 0$.
	\end{lemma}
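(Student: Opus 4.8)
The plan is to read the sets $J(\alpha_0,\cdot)$ as cones (spherical caps) about the direction $\alpha_0$ and reduce the claim to an elementary statement about angles. First I would dispose of the degenerate cases: if $\alpha=0$ or $\beta=0$, then $\alpha^T\beta=0\geq 0$ and there is nothing to prove, so I may assume $\alpha,\beta\neq 0$ (that $\alpha_0\neq 0$ is built into the definition of $J$). Passing to the unit vectors $\hat\alpha=\alpha/\|\alpha\|_2$, $\hat\beta=\beta/\|\beta\|_2$, $\hat\alpha_0=\alpha_0/\|\alpha_0\|_2$ and dividing the defining inequalities through by the (positive) norms, the hypotheses become $\hat\alpha^T\hat\alpha_0\geq\sqrt{1-\delta^2}$ and $\hat\beta^T\hat\alpha_0\geq\delta$, and the goal is $\hat\alpha^T\hat\beta\geq 0$.

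The key step is to introduce the angular distance $d(u,v)=\arccos(u^Tv)\in[0,\pi]$ on the unit sphere, which is a genuine metric and in particular obeys the triangle inequality. Writing $\phi=\arccos\delta\in[0,\pi/2]$ (which is legitimate since $\delta\in[0,1]$), the crucial observation is that the two cap half-angles are complementary: $\arccos\sqrt{1-\delta^2}=\pi/2-\phi$, because $\cos(\pi/2-\phi)=\sin\phi=\sqrt{1-\delta^2}$. Hence the hypotheses read $d(\hat\alpha,\hat\alpha_0)\leq\pi/2-\phi$ and $d(\hat\beta,\hat\alpha_0)\leq\phi$, and the triangle inequality gives $d(\hat\alpha,\hat\beta)\leq d(\hat\alpha,\hat\alpha_0)+d(\hat\alpha_0,\hat\beta)\leq\pi/2$. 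Since $\arccos$ is decreasing, this yields $\hat\alpha^T\hat\beta=\cos d(\hat\alpha,\hat\beta)\geq\cos(\pi/2)=0$, as required. If one prefers to avoid quoting the metric property, the identical conclusion follows from the spherical law of cosines, which turns the triangle inequality into the algebraic bound $\hat\alpha^T\hat\beta\geq(\hat\alpha^T\hat\alpha_0)(\hat\beta^T\hat\alpha_0)-\sqrt{1-(\hat\alpha^T\hat\alpha_0)^2}\,\sqrt{1-(\hat\beta^T\hat\alpha_0)^2}$; bounding each factor via $\hat\alpha^T\hat\alpha_0\geq\sqrt{1-\delta^2}\geq 0$ and $\hat\beta^T\hat\alpha_0\geq\delta\geq 0$ (which also force $\sqrt{1-(\hat\alpha^T\hat\alpha_0)^2}\leq\delta$ and $\sqrt{1-(\hat\beta^T\hat\alpha_0)^2}\leq\sqrt{1-\delta^2}$) produces the lower bound $\sqrt{1-\delta^2}\,\delta-\delta\,\sqrt{1-\delta^2}=0$.

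Since the result is elementary, there is no serious obstacle; the only step demanding genuine care is the sign and range bookkeeping. Specifically, I must use $\delta\in[0,1]$ to guarantee that $\phi\in[0,\pi/2]$, that the complementary half-angle $\pi/2-\phi$ is non-negative, and that all the relevant inner products are non-negative, so that the monotonicity of $\arccos$ and the bounds on the $\sin$ factors all point in the correct direction. I must also remember to handle the degenerate case $\alpha=0$ or $\beta=0$ separately, since the angular interpretation is only available for nonzero vectors.
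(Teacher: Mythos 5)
Your proof is correct. The paper's own argument coincides, in substance, with your second (algebraic) route: it introduces the orthogonal projection $P_0$ onto the span of $\alpha_0$, decomposes $\alpha^T\beta = (P_0\alpha)^T(P_0\beta) + [(1-P_0)\alpha]^T[(1-P_0)\beta]$, lower-bounds the second term by $-\|(1-P_0)\alpha\|_2\,\|(1-P_0)\beta\|_2$ via Cauchy--Schwarz, and then uses the hypotheses to get $\|(1-P_0)\alpha\|_2 \leq \delta\|\alpha\|_2$ and $\|(1-P_0)\beta\|_2 \leq \sqrt{1-\delta^2}\,\|\beta\|_2$, so the two terms cancel at worst --- this is exactly your ``spherical law of cosines'' bound after normalization. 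Your headline route is genuinely different in presentation: reading the sets as spherical caps with complementary half-angles $\pi/2-\phi$ and $\phi$, and invoking the triangle inequality for the geodesic metric $d(u,v)=\arccos(u^Tv)$, gives a shorter and more transparent geometric argument; what it costs is that the real work is outsourced to the metric property of angular distance, a standard fact whose usual proof is precisely the projection-plus-Cauchy--Schwarz computation the paper writes out, so the paper's version is the self-contained one. A small point in your favor: you explicitly dispose of the degenerate cases $\alpha=0$ or $\beta=0$ (both vectors belong to every $J(\alpha_0,\cdot)$), which the paper's algebra covers silently but never mentions, and you correctly track the sign conditions ($\delta\in[0,1]$, both inner products with $\alpha_0$ nonnegative) needed for the product lower bound to go through.
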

	\begin{proof}
		For two vectors $\alpha_1$ and $\alpha_2$ in $\mathbb{R}^p$, let $\alpha_1^T \alpha_2$ be their inner-product.
		Let $P_0$ be the orthogonal projection onto the subspace spanned by~$\alpha_0$. Then $P_0 \alpha = (\alpha_0^T \alpha / \|\alpha_0\|_2^2) \alpha_0$, and $P_0 \beta = (\alpha_0^T \beta / \|\alpha_0\|_2^2) \alpha_0$. Note that
		\[
		\begin{aligned}
			\|P_0\alpha\|_2 &= \frac{\alpha_0^T\alpha}{\|\alpha_0\|_2} \geq \sqrt{1-\delta^2} \|\alpha\|_2 \;, \\
			\|P_0\beta\|_2 &= \frac{\alpha_0^T\beta}{\|\alpha_0\|_2} \geq \delta \|\beta\|_2 \;.
		\end{aligned}
		\]
		As a result,
		\begin{align*}
			\alpha^T \beta &= (P_0 \alpha)^T (P_0 \beta) + [(1-P_0)\alpha]^T [(1-P_0) \beta] \\
			&\geq \frac{(\alpha_0^T\alpha)(\alpha_0^T\beta)}{\|\alpha_0\|_2^2} -  \|(1-P_0)\alpha\|_2 \|(1-P_0)\beta\|_2 \\
			& \geq \delta\sqrt{1-\delta^2} \|\alpha\|_2\|\beta\|_2 - \sqrt{\|\alpha\|_2^2 - \|P_0\alpha\|_2^2} \sqrt{\|\beta\|_2^2 - \|P_0\beta\|_2^2} \\
			& \geq 0 \;.
		\end{align*}
	\end{proof}
	
	We are now prepared to establish Proposition~\ref{pro:largen}.
	
	\begin{proof}[Proof of Proposition~\ref{pro:largen}]
		By Proposition~\ref{pro:acboundsimple} and Lemma~\ref{lem:approx},
		if $\Sigma$ is non-singular and $\hat{\rho}_2 := \sup_{\beta
			\in \mathbb{R}^p} \hat{\gamma}(\beta) + \sigma(2 \log
		p)^{1/2} < 1$, then~$\Pi$ is a proper probability
		distribution, and $\rho_*(\psi) \leq \hat{\rho}_2$.  Here,
		\[\sigma = \lambda_{\max}^{1/2} \left[\sum_{i=1}^{n}
		(\Sigma^{-1/2}x_ix_i^T\Sigma^{-1/2})^2\right] \,,
		\]
		\[
		\hat{\gamma}(\beta) = \lambda_{\max} \left[ \Sigma^{-1/2}
		\left(\sum_{i=1}^n x_i \xi(x_i^T\beta ,y_i) x_i^T \right)
		\Sigma^{-1/2} \right] \;,
		\]
		and
		\[
		\xi(\mu,v) = \int_0^1 s(u,\mu) \, \df u \; 1_{\{1\}}(v) +
		\int_0^1 s(u,-\mu) \, \df u \; 1_{\{0\}}(v) \;, \; \mu \in
		\mathbb{R}, \; v \in \{0,1\} \;.
		\]
		For the rest of this proof, let~$p$ be arbitrarily fixed.  
		By condition $(B1)$, almost surely, for all large $n$,~$\Sigma$ will be positive-definite.  
		We will first show
		that \[
		\limsup_{n \to \infty} \sigma(2
		\log p)^{1/2} = 0 \,,
		\]
		almost surely. We then show that there
		exists a constant $\rho < 1$ such that
		\[
		\limsup_{n \to \infty} \sup_{\beta \in \mathbb{R}^p}
		\hat{\gamma}(\beta) \leq \rho \;,
		\]
		almost surely.  This will ensure that with probability~$1$, for large
		enough~$n$,~$\Pi$ is proper, and $\rho_*(\psi)$ is bounded
		above by any number strictly greater than~$\rho$.
		
		Note that
		\[
		\begin{aligned}
			\sigma^2 & \leq \mbox{trace} \left[\sum_{i=1}^{n}
			(\Sigma^{-1/2}x_ix_i^T\Sigma^{-1/2})^2\right] \\ & \leq
			\lambda_{\min}^{-2}(\Sigma) \sum_{i=1}^{n} \|x_i\|_2^4 \\ & =
			\frac{1}{n} \lambda_{\min}^{-2} \left( \frac{1}{n}
			\sum_{i=1}^{n} x_ix_i^T + \frac{Q}{n} \right) \frac{1}{n}
			\sum_{i=1}^{n} \|x_i\|_2^4 \;.
		\end{aligned}
		\]
		By the law of large numbers, $(B1)$, and $(B2)$, almost surely,
		\[
		\limsup_{n \to \infty} n \sigma^2 \leq \lambda_{\min}^{-2} (
		\tilde{\mathbb{E}} x_1x_1^T) \, \tilde{\mathbb{E}} \|x_1\|_2^4
		< \infty \;.
		\]
		As a result, with probability~$1$, $\lim_{n \to \infty}
		\sigma(2 \log p)^{1/2} = 0$.
		
		For $\delta
		\in [0,1]$ and $\alpha \in \mathbb{R}^p \setminus \{0\}$, let
		$J(\alpha,\delta) = \{\beta \in \mathbb{R}^p: \alpha^T\beta
		\geq \delta \|\alpha\|_2\|\beta\|_2 \}$.  For every $\delta
		\in (0,1]$, one can pick a finite number of vectors, say
		$\alpha_1,\alpha_2,\dots,\alpha_{m(\delta)}$, such that
		$\mathbb{R}^p = \bigcup_{j=1}^{m(\delta)}
		J(\alpha_j,\sqrt{1-\delta^2}) $.  By Lemma~\ref{lem:J}, if $\alpha \in J(\alpha_j, \sqrt{1-\delta^2})$, and
		$\beta \in J(\alpha_j, \delta)$ for some~$j$, then
		$\alpha^T\beta \geq 0$. Using this fact and
		Lemma~\ref{lem:sint}, we see that when $\beta \in J(\alpha_j,\sqrt{1-\delta^2})$ for some~$j$ and~$\delta$, the following holds for each~$i$:
		\[
		1 - \xi \left( x_i^T \beta, y_i \right) \geq \frac{1}{2} \left[ 1_{J(-\alpha_j,\delta)}(x_i) 1_{\{1\}}(y_i) + 1_{J(\alpha_j,\delta)}(x_i) 1_{\{0\}}(y_i) \right] .
		\]
		It follows that for any $\delta > 0$,
		\begin{equation} \label{ine:gammahatbound}
			\begin{aligned}
				&\sup_{\beta \in \mathbb{R}^p} \hat{\gamma}(\beta) \\ & \leq
				\max_{1 \leq j \leq m(\delta)} \; \sup_{\beta \in
					J(\alpha_j, \sqrt{1-\delta^2})} \lambda_{\max} \left\{
				\Sigma^{-1/2} \left[\sum_{i=1}^n x_i \, \xi \left(x_i^T
				\beta, y_i \right) \, x_i^T + Q \right] \Sigma^{-1/2}
				\right\} \\ & = 1 - \min_{1\leq j \leq m(\delta)} \;
				\inf_{\beta \in J(\alpha_j,\sqrt{1-\delta^2})} \\
				& \qquad \lambda_{\min}  \left\{ \Sigma^{-1/2} \left[\sum_{i=1}^n
				x_i \, \left( 1 - \xi \left(x_i^T \beta, y_i \right)
				\right)\, x_i^T \right] \Sigma^{-1/2} \right\} \\ & \leq
				1 - \frac{1}{2} \min_{1\leq j \leq m(\delta)}  \lambda_{\min}
				\bigg\{ \Sigma^{-1/2} \bigg[ \sum_{i=1}^n x_i x_i^T \bigg(
				1_{J(-\alpha_j,\delta)}(x_i) 1_{\{1\}}(y_i) \\
				& \qquad + 
				1_{J(\alpha_j,\delta)}(x_i) 1_{\{0\}}(y_i) \bigg) \bigg]
				\Sigma^{-1/2} \bigg\} \\
				& \leq 1 - \frac{1}{2} \min_{1\leq j \leq m(\delta)}  \lambda_{\min}
				\bigg\{ \Sigma^{-1/2} \bigg[ \sum_{i=1}^n x_i x_i^T \bigg(
				1_{(-\infty,0]}(x_i^T\alpha_j) 1_{\{1\}}(y_i) \\
				& \qquad + 
				1_{[0,\infty)}(x_i^T\alpha_j) 1_{\{0\}}(y_i) - 1_{[0,\delta\|x_i\|_2\|\alpha_j\|_2)}(|x_i^T \alpha_j|) \bigg) \bigg]
				\Sigma^{-1/2} \bigg\}\,.
			\end{aligned}
		\end{equation}
		Note that the last equality follows from the fact that
		\[
		\begin{aligned}
			&1_{J(-\alpha_j,\delta)}(x_i) 1_{\{1\}}(y_i) + 
			1_{J(\alpha_j,\delta)}(x_i) 1_{\{0\}}(y_i) \\
			&\geq 1_{(-\infty,0]}(x_i^T\alpha_j) 1_{\{1\}}(y_i) + 1_{[0,\infty)}(x_i^T\alpha_j) 1_{\{0\}}(y_i) - 1_{[0,\delta\|x_i\|_2\|\alpha_j\|_2)}(|x_i^T \alpha_j|) \,.
		\end{aligned}
		\]
		By Weyl's inequality, the last line of~\eqref{ine:gammahatbound} is upper bounded by
		\[
		\begin{aligned}
			& 1 - \frac{1}{2}
			\min_{1\leq j \leq m(\delta)} \lambda_{\min} \bigg\{
			\Sigma^{-1/2} \bigg[\sum_{i=1}^n x_i x_i^T \bigg(
			1_{(-\infty,0]}(x_i^T\alpha_j) 1_{\{1\}}(y_i) \\
			&\quad +
			1_{[0,\infty)}(x_i^T\alpha_j) 1_{\{0\}}(y_i) \bigg)
			\bigg] \Sigma^{-1/2} \bigg\} \\ & \quad 
			+\frac{1}{2} \max_{1\leq j \leq m(\delta)} \lambda_{\max}
			\left[ \Sigma^{-1/2} \left( \sum_{i=1}^n x_i x_i^T
			1_{[0,\delta\|x_i\|_2\|\alpha_j\|_2)} (|x_i^T\alpha_j|)
			\right) \Sigma^{-1/2} \right] \;.
		\end{aligned}
		\]
		For $i = 1,2,\dots$ and $j = 1,2,\dots,m(\delta)$,
		\[
		\begin{aligned}
			& \tilde{\mathbb{E}} x_i x_i^T \left[
			1_{(-\infty,0]}(x_i^T\alpha_j) 1_{\{1\}}(y_i) +
			1_{[0,\infty)}(x_i^T\alpha_j) 1_{\{0\}}(y_i) \right] \\ &=
			\tilde{\mathbb{E}} x_1 x_1^T \left\{
			1_{(-\infty,0]}(x_1^T\alpha_j) G(x_1^T\beta_*) +
			1_{[0,\infty)}(x_1^T\alpha_j) \left[1-G(x_1^T\beta_*)
			\right] \right\} \\ & \succcurlyeq \tilde{\mathbb{E}}
			x_1x_1^T \bar{G}(x_1^T\beta_*) \;.
		\end{aligned}
		\]
		Applying the strong law reveals that almost surely, 
		\[
		\begin{aligned}
			&\limsup_{n \to \infty} \sup_{\beta \in \mathbb{R}^p}
			\hat{\gamma}(\beta) \\ &\leq 1 - \frac{1}{2} \,
			\lambda_{\min} \left[ ( \tilde{\mathbb{E}} x_1x_1)^{-1/2}
			\left( \tilde{\mathbb{E}} x_1x_1^T \bar{G}(x_1^T\beta_*)
			\right) ( \tilde{\mathbb{E}} x_1x_1)^{-1/2} \right] \\ &
			\qquad + \frac{1}{2} \lambda^{-1}_{\min} (
			\tilde{\mathbb{E}} x_1x_1^T) \max_{1 \leq j \leq m(\delta)}
			\lambda_{\max} \left[ \tilde{\mathbb{E}} x_1x_1^T
			1_{[0,\delta\|x_1\|_2\|\alpha_j\|_2)} (|x_1^T\alpha_j|)
			\right] \\ & \leq 1 - \frac{1}{2} \, \lambda_{\min} \left[
			( \tilde{\mathbb{E}} x_1x_1)^{-1/2} \left(
			\tilde{\mathbb{E}} x_1x_1^T \bar{G}(x_1^T\beta_*) \right)
			( \tilde{\mathbb{E}} x_1x_1)^{-1/2} \right] \\ & \qquad +
			\frac{1}{2} \lambda^{-1}_{\min} ( \tilde{\mathbb{E}}
			x_1x_1^T) \sup_{\alpha \in \mathbb{R}^p \setminus \{0\}}
			\sqrt{ \tilde{\mathbb{E}} \|x_1\|_2^4 \, \tilde{\mathbb{P}}
				(|x_1^T\alpha| < \delta \|x_1\|_2\|\alpha\|_2)} \;,
		\end{aligned}
		\]
		where the last line follows from Cauchy-Schwarz and the fact
		that for every~$j$,
		\[
		\begin{aligned}
			&\lambda_{\max} \left[ \tilde{\mathbb{E}} x_1x_1^T
			1_{[0,\delta\|x_1\|_2\|\alpha_j\|_2)} (|x_1^T\alpha_j|)
			\right] \\
			&\leq \mathrm{trace} \left\{ \tilde{\mathbb{E}}
			\left[ x_1x_1^T 1_{[0,\delta\|x_1\|_2\|\alpha_j\|_2)}
			(|x_1^T\alpha_j|) \right] \right\} \\ &= \tilde{\mathbb{E}}
			\left[ \|x_1\|_2^2 1_{[0,\delta\|x_1\|_2\|\alpha_j\|_2)}
			(|x_1^T\alpha_j|) \right] \;.
		\end{aligned}
		\]
		Let $\delta \to 0$. Then it follows from $(B1) - (B3)$ that
		almost surely,
		\[
		\limsup_{n \to \infty} \sup_{\beta \in \mathbb{R}^p}
		\hat{\gamma}(\beta) \leq 1 - \frac{1}{2} \, \lambda_{\min}
		\left[ ( \tilde{\mathbb{E}} x_1x_1)^{-1/2} \left(
		\tilde{\mathbb{E}} x_1x_1^T \bar{G}(x_1^T\beta_*) \right) (
		\tilde{\mathbb{E}} x_1x_1)^{-1/2} \right] \;.
		\]
		Since~$p$ is arbitrary, the result follows from $(B4)$.
	\end{proof}

	%

	\subsection{Proposition~\ref{pro:repeated}}
	
	\begin{proof}
		By Condition $(D1)$ along with Proposition~\ref{pro:acboundsimple} and Lemma~\ref{lem:approx},
		if $\hat{\rho}_2 := \sup_{\beta
			\in \mathbb{R}^p} \hat{\gamma}(\beta) + \sigma(2 \log
		p)^{1/2} < 1$, then~$\Pi$ is a proper probability
		distribution, and $\rho_*(\psi) \leq \hat{\rho}_2$.
		Since $(D2)$ holds, it suffices to show that there exists a
		constant $\rho < 1$ such that, for any $\epsilon > 0$,
		\[
		\lim\limits_{k \to \infty} \tilde{\mathbb{P}} \left(
		\sup_{\beta \in \mathbb{R}^p} \hat{\gamma}(\beta) > \rho +
		\epsilon \right) = 0 \;.
		\]
		Note that for $\beta \in \mathbb{R}^p$,
		\[
		\hat{\gamma}(\beta) = \lambda_{\max} \left[ \Sigma^{-1/2}
		\left( \sum_{i=1}^{q} \sum_{j=1}^{r_i} \tilde{x}_i \;
		\xi(\tilde{x}_i^T\beta, y_{ij}) \; \tilde{x}_i^T \right)
		\Sigma^{-1/2} \right] \;,
		\]
		where as before,
		\[
		\xi(\mu,v) = \int_0^1 s(u,\mu) \,\df u 1_{\{1\}}(v) + \int_0^1
		s(u, -\mu) \, \df u 1_{\{0\}}(v) \; , \; \mu \in \mathbb{R}
		\;, \; v \in \{0,1\} \;.
		\]
		For $i = 1,2,\dots,q$, let
		\[
		v_i = \left( \frac{1}{r_i} \sum_{j=1}^{r_i} y_{ij} \right)
		\wedge \left( 1 - \frac{1}{r_i} \sum_{j=1}^{r_i} y_{ij}
		\right) \;.
		\]
		By Lemma~\ref{lem:sint}, for each~$i$ and $\beta \in \mathbb{R}^p$,
		\[
		\sum_{j=1}^{r_i} \xi(\tilde{x}_i^T\beta, y_{ij}) \leq r_i -
		\frac{1}{2} v_i r_i \;.
		\]
		As a result,
		\begin{align*}
			&\Sigma^{-1/2} \left( \sum_{i=1}^{q} \sum_{j=1}^{r_i}
			\tilde{x}_i \xi(\tilde{x}_i^T\beta, y_{ij}) \tilde{x}_i^T
			\right) \Sigma^{-1/2} \\
			& \preccurlyeq \Sigma^{-1/2} \left[
			\sum_{i=1}^{q} \left( 1 - \frac{1}{2} v_i \right) \left(
			r_i \tilde{x}_i \tilde{x}_i^T + \frac{r_i}{n} Q \right)
			\right] \Sigma^{-1/2} \\ & \preccurlyeq \left( 1 -
			\frac{1}{2} \min_{1 \leq i \leq q} v_i \right) I_p.
		\end{align*}
		Thus, $\sup_{\beta \in \mathbb{R}^p} \hat{\gamma}(\beta) \leq
		1 - \min_{1 \leq i \leq q} v_i / 2$.
		
		Now, for any $\epsilon > 0$, by Hoeffding's inequality, 
		\begin{align*}
			&\tilde{\mathbb{P}}\left(\min_{1 \leq i \leq q} v_i \geq
			\min_{1 \leq i \leq q} \left[ G(\tilde{x}_i^T\beta_*) \wedge
			(1 - G(\tilde{x}_i^T\beta_*)) \right] - \epsilon \right)
			\\ & \geq \tilde{\mathbb{P}}\left( \max_{1 \leq i \leq q}
			\left| \frac{1}{r_i} \sum_{j=1}^{r_i} y_{ij} -
			G(\tilde{x}_i^T\beta_*) \right| \leq \epsilon \right) \\ &=
			\prod_{i=1}^{q} \tilde{\mathbb{P}} \left(
			\left|\frac{1}{r_i} \sum_{j=1}^{r_i} y_{ij} -
			G(\tilde{x}_i^T\beta_*)\right| \leq \epsilon \right) \\ &
			\geq \prod_{j=i}^{q} [1 - 2\exp(-2r_i\epsilon^2)] \\ & \geq
			\left[1 - 2\exp(-2r\varepsilon^2) \right]^q.
		\end{align*}
		Note that $[1 - 2\exp(-2r\epsilon^2)]^q \to 1$ if $\log q -
		2\epsilon^2 r \to -\infty$, which holds because of $(D3)$. By
		$(D4)$,
		\[
		\min_{1 \leq i \leq q} \left[ G(\tilde{x}_i^T\beta_*) \wedge
		(1 - G(\tilde{x}_i^T\beta_*)) \right] \geq \inf_{|\mu| < \ell}
		[G(\mu) \wedge (1 - G(\mu))] =: g(\ell).
		\]
		As a result, for any $\epsilon > 0$,
		\begin{align*}
			&\tilde{\mathbb{P}} \left(\sup_{\beta \in \mathbb{R}^p}
			\hat{\gamma}(\beta) > 1 - \frac{1}{2}g(\ell) + \epsilon \right) \\
			&\leq \tilde{\mathbb{P}} \left( 1 - \frac{1}{2} \min_{1 \leq i
				\leq q} v_i > 1 - \frac{g(\ell)}{2} + \epsilon \right) \\ &
			\leq \tilde{\mathbb{P}} \left( \min_{1 \leq i \leq q} v_i <
			\min_{1 \leq i \leq q} \left[ G(\tilde{x}_i^T\beta_*) \wedge
			(1 - G(\tilde{x}_i^T\beta_*)) \right] - 2\epsilon \right)
			\\ & \to 0.
		\end{align*}
		Thus, the conclusion holds with $\rho = 1 - g(\ell)/2$.
	\end{proof}

	\section{Proofs for Section~\ref{SEC:RANDOMEFFECTS}} \label{app:randomeffects}
	
	\subsection{Proposition~\ref{pro:randomeffectstv}}
	\begin{proof}
		By Proposition~\ref{pro:madras}, it suffices to show that there exists $c < \infty$ such that, for $p \geq 2$, $r \geq 1$, and $\eta, \eta' \in \mathbb{R}^{p+1}$,
		\[
		\int_{\mathbb{R}^{p+1}} |k(\eta,\tilde{\eta}) - k(\eta',\tilde{\eta}) | \df \tilde{\eta} \leq cr^{3/2}p \|\eta-\eta'\|_2 \,,
		\]
		where
		\[
		k(\eta,\tilde{\eta}) = \int_0^{\infty} \int_0^{\infty} \pi_1(\tilde{\eta}|\lambda_{\theta},\lambda_e,y) \pi_{21}(\lambda_{\theta}|\eta,y) \pi_{22}(\lambda_e|\eta,y) \, \df \lambda_{\theta} \, \df\lambda_e 
		\]
		is the transition density function corresponding to~$K$.
		(This has been defined in Subsection~\ref{ssec:randomeffectsprel} in a slightly different form.)
		Here, $\pi_1$ is the density function corresponding to the distribution of $\eta|\lambda_{\theta},\lambda_e,y$; $\pi_{21}$ and $\pi_{22}$ are defined analogously.
		
		Fix $p \geq 2$ and $r \geq 1$.
		It's easy to show that for $\eta,\eta' \in \mathbb{R}^{p+1}$,
		\[
		\begin{aligned}
			&\int_{\mathbb{R}^{p+1}} |k(\eta,\tilde{\eta}) - k(\eta',\tilde{\eta}) | \df \tilde{\eta} \\
			&\leq \int_0^{\infty} \int_0^{\infty} |\pi_{21}(\lambda_{\theta}|\eta,y) \pi_{22}(\lambda_e|\eta,y) - \pi_{21}(\lambda_{\theta}|\eta',y) \pi_{22}(\lambda_e|\eta',y) | \, \df \lambda_{\theta} \, \df\lambda_e \,.
		\end{aligned}
		\]
		Moreover,
		\[
		\begin{aligned}
			&|\pi_{21}(\lambda_{\theta}|\eta,y) \pi_{22}(\lambda_e|\eta,y) - \pi_{21}(\lambda_{\theta}|\eta',y) \pi_{22}(\lambda_e|\eta',y) | \\ & \leq |\pi_{21}(\lambda_{\theta}|\eta,y) \pi_{22}(\lambda_e|\eta,y) - \pi_{21}(\lambda_{\theta}|\eta',y) \pi_{22}(\lambda_e|\eta,y) | \\
			& \qquad + |\pi_{21}(\lambda_{\theta}|\eta',y) \pi_{22}(\lambda_e|\eta,y) - \pi_{21}(\lambda_{\theta}|\eta',y) \pi_{22}(\lambda_e|\eta',y) | \,.
		\end{aligned}
		\]
		This implies that
		\begin{equation} \label{ine:randomeffectstv-0}
			\begin{aligned}
				&\int_{\mathbb{R}^{p+1}} |k(\eta,\tilde{\eta}) - k(\eta',\tilde{\eta}) | \df \tilde{\eta} \\
				& \leq \int_0^{\infty} |\pi_{21}(\lambda_{\theta}|\eta,y)  - \pi_{21}(\lambda_{\theta}|\eta',y)  | \, \df \lambda_{\theta} \, \\
				&\quad + \int_0^{\infty} |\pi_{22}(\lambda_e|\eta,y)  - \pi_{22}(\lambda_e|\eta',y)  | \, \df \lambda_e \,.
			\end{aligned}
		\end{equation}
		
		Recall that, for $\eta = (\eta_0,\eta_1,\dots,\eta_p)^T$, $\pi_{21}(\lambda_{\theta}|\eta,y)$ corresponds to a Gamma distribution with parameters $p/2+a_1$ and $b_1^{(\eta)} = b_1 + \sum_{i=1}^p \eta_i^2/2$.
		Let $\eta, \eta' \in \mathbb{R}^{p+1}$, and, without loss of generality, assume that $b_1^{(\eta)} \leq b_1^{(\eta')}$.
		By Lemma~1 in \cite{hobert2001art},
		\[
		\begin{aligned}
			&\int_0^{\infty} |\pi_{21}(\lambda_{\theta}|\eta,y)  - \pi_{21}(\lambda_{\theta}|\eta',y)  | \, \df \lambda_{\theta} \\
			&= 2 \int_0^{t_1} |\pi_{21}(\lambda_{\theta}|\eta,y)  - \pi_{21}(\lambda_{\theta}|\eta',y)  | \, \df \lambda_{\theta} \\
			&= 2 \int_0^{t_1} \frac{ \left( b_1^{(\eta)} \right) ^{p/2+a_1} t^{p/2+a_1-1} \exp \left( -b_1^{(\eta)}t \right)}{\Gamma(p/2+a_1)} \\
			& \qquad \times \left[ \left( \frac{b_1^{(\eta')}}{b_1^{(\eta)}} \right)^{p/2+a_1} \exp \left( b_1^{(\eta)}t - b_1^{(\eta'
				)}t \right) - 1 \right] \, \df t \\
			& \leq 2 \left[ \left( \frac{b_1^{(\eta)} + \Delta_1(\eta,\eta') }{b_1^{(\eta)}} \right)^{p/2+a_1}  - 1 \right] ,
		\end{aligned}
		\]
		where
		\[
		t_1 = \frac{p/2+a_1}{\Delta_1(\eta,\eta')} \log \left( 1 + \frac{\Delta_1(\eta,\eta')}{b_1^{(\eta)}} \right) ,
		\]
		and
		\[
		\begin{aligned}
			\Delta_1(\eta,\eta') &= b_1^{(\eta')}-b_1^{(\eta)} \\
			&= \sum_{i=1}^p \left[ \eta_i (\eta'_i - \eta_i) + \frac{(\eta'_i - \eta_i)^2}{2} \right] \\
			& \leq \sqrt{2b_1^{(\eta)}} \|\eta-\eta'\|_2 + \frac{\|\eta-\eta'\|_2^2}{2} \,.
		\end{aligned}
		\]
		This implies that
		\begin{equation} \label{ine:randomeffectstv-1}
			\begin{aligned}
				&\int_0^{\infty} |\pi_{21}(\lambda_{\theta}|\eta,y)  - \pi_{21}(\lambda_{\theta}|\eta',y)  | \, \df \lambda_{\theta}  \\
				&\leq 2 \left[ \left( 1 + \sqrt{\frac{2}{b_1}} \|\eta-\eta'\|_2 + \frac{\|\eta-\eta'\|_2^2}{2b_1} \right)^{p/2+a_1} - 1  \right] .
			\end{aligned}
		\end{equation}
		Recall that we assume $p \geq 2$, so $p/2+a_1-1 > 0$.
		Applying the mean-value theorem shows that whenever $\|\eta-\eta'\|_2 < 1/p$,
		\[
		2 \left[ \left( 1 + \sqrt{\frac{2}{b_1}} \|\eta-\eta'\|_2 + \frac{\|\eta-\eta'\|_2^2}{2b_1} \right)^{p/2+a_1} - 1  \right] \leq c_1 p \|\eta-\eta'\|_2 \,,
		\]
		where
		\[
		\begin{aligned}
			c_1 &= \max_{p \geq 2} \left[ 2p^{-1} \left( \frac{p}{2} + a_1 \right) \left( 1 + \sqrt{\frac{2}{b_1}} \frac{1}{p} + \frac{1}{2b_1p^2} \right)^{p/2+a_1-1} \left( \sqrt{\frac{2}{b_1}} + \frac{1}{b_1p} \right) \right] \\
			& < \infty \,.
		\end{aligned}
		\]
		Collecting terms, we see that whenever $\|\eta-\eta'\|_2 < 1/p$,
		\[
		\int_0^{\infty} |\pi_{21}(\lambda_{\theta}|\eta,y)  - \pi_{21}(\lambda_{\theta}|\eta',y)  | \, \df \lambda_{\theta} \leq c_1 p \|\eta-\eta'\|_2 \,.
		\]
		
		Similarly, letting
		\[
		b_2^{(\eta)} = b_2 + \frac{1}{2} \sum_{i=1}^p \sum_{j=1}^r \left( y_{ij} - \frac{\eta_0}{\sqrt{p}} - \eta_i \right)^2,
		\]
		one can show that for $\eta, \eta' \in \mathbb{R}^{p+1}$ such that $b_2^{(\eta)} \leq b_2^{(\eta')}$,
		\[
		\begin{aligned}
			&\int_0^{\infty} |\pi_{22}(\lambda_e|\eta,y)  - \pi_{22}(\lambda_e|\eta',y)  | \, \df \lambda_e \\
			& \leq 2 \left[ \left( \frac{b_2^{(\eta)} + \Delta_2(\eta,\eta') }{b_2^{(\eta)}} \right)^{rp/2+a_2}  - 1 \right],
		\end{aligned}
		\]
		where
		\[
		\Delta_2(\eta,\eta') = b_2^{(\eta')} - b_1^{(\eta)} \leq 2\sqrt{b_2^{(\eta)}r} \|\eta-\eta'\|_2 + r\|\eta-\eta'\|_2^2 \,.
		\]
		Now, by the mean-value theorem, for $\|\eta-\eta'\|_2 < r^{-3/2}p^{-1}$,
		\[
		\begin{aligned}
			&2 \left[ \left( \frac{b_2^{(\eta)} + \Delta_2(\eta,\eta') }{b_2^{(\eta)}} \right)^{rp/2+a_2}  - 1 \right] \\
			& \leq 2 \left[ \left( 1 + 2\sqrt{\frac{r}{ b_2} } \|\eta-\eta'\|_2 + \frac{r\|\eta-\eta'\|_2^2}{b_2} \right)^{rp/2+a_2}  - 1 \right]  \\
			&\leq c_2 r^{3/2}p \|\eta-\eta'\|_2 \,,
		\end{aligned}
		\]
		where
		\[
		\begin{aligned}
			c_2 =& \max_{p \geq 2}  \left[ 2 r^{-3/2} p^{-1}  \left( \frac{rp}{2} + a_2 \right) \left( 1 + \frac{2}{\sqrt{b_2}rp} + \frac{1}{b_2 r^2p^2} \right)^{rp/2+a_2-1} \right.\\
			&\left. \qquad \times  \left( 2 \sqrt{\frac{r}{b_2}} + \frac{2}{b_2 \sqrt{r} p} \right)  \right] < \infty \,.
		\end{aligned}
		\]
		Thus, when $\|\eta-\eta'\|_2 < r^{-3/2}p^{-1}$,
		\begin{equation} \label{ine:randomeffectstv-2}
			\int_0^{\infty} |\pi_{22}(\lambda_e|\eta,y)  - \pi_{22}(\lambda_e|\eta',y)  | \, \df \lambda_e \leq c_2 r^{3/2}p \|\eta-\eta'\|_2 \,. 
		\end{equation}
		
		Combining \eqref{ine:randomeffectstv-0}, \eqref{ine:randomeffectstv-1}, and \eqref{ine:randomeffectstv-2} shows that
		\begin{equation} \label{ine:randomeffectstv-3}
			\int_{\mathbb{R}^{p+1}} |k(\eta,\tilde{\eta}) - k(\eta',\tilde{\eta}) | \df \tilde{\eta} \leq (c_1+c_2)r^{3/2}p \|\eta-\eta'\|_2
		\end{equation}
		whenever $p \geq 2$ and $\|\eta-\eta'\| < r^{-3/2}p^{-1}$.
		To derive this formula for general $(\eta,\eta')$s, divide the vector $\eta-\eta'$ into segments whose length are less than $r^{-3/2}p^{-1}$.
		To be specific, let $\eta_{(i)}$, $i=1,2\dots,N+1,$ be points in $\mathbb{R}^{p+1}$ such that
		\[
		\eta - \eta' = \sum_{i=0}^{N} (\eta_{(i+1)} - \eta_{(i)} ) \,,
		\]
		where $\eta_{(0)} = \eta'$, $\eta_{(N+1)} = \eta$, $\eta_{(i+1)} - \eta_{(i)}$ has the same direction as $\eta-\eta'$, and $\|\eta_{(i+1)} - \eta_{(i)}\|_2 < r^{-3/2}p^{-1}$.
		Then~\eqref{ine:randomeffectstv-3} implies that
		\[
		\begin{aligned}
			\int_{\mathbb{R}^{p+1}} |k(\eta,\tilde{\eta}) - k(\eta',\tilde{\eta}) | \df \tilde{\eta} & \leq \sum_{i=0}^{N} \int_{\mathbb{R}^{p+1}} |k(\eta_{(i+1)},\tilde{\eta}) - k(\eta_{(i)},\tilde{\eta}) | \df \tilde{\eta} \\
			& \leq (c_1+c_2) r^{3/2} p \sum_{i=0}^N \|\eta_{(i+1)} - \eta_{(i)}\|_2 \\
			&= (c_1+c_2) r^{3/2} p \|\eta-\eta'\|_2 \,.
		\end{aligned}
		\]
		This completes the proof.
	\end{proof}
	
	\subsection{Lemma~\ref{lem:driftrandomeffects}}
	
	\begin{proof}
		Recall that, in Subsection~\ref{ssec:randomeffectsprel}, we have defined the following:
		\begin{equation} \nonumber
			\begin{aligned}
				\lambda_{\theta}^{(\eta)} &= \frac{J_1}{b_1 +  \sum_{i=1}^{p} \eta_i^2/2 } \,,\\
				\lambda_e^{(\eta)} &= \frac{J_2}{b_2 +  \sum_{i=1}^p \sum_{j=1}^r (y_{ij}-\eta_0/\sqrt{p}-\eta_i)^2/2 } \,, \\
				\tilde{\eta}_0^{(\eta)} &= \sqrt{p} \bar{y} + \sqrt{\frac{\lambda_{\theta}^{(\eta)}+r\lambda_e^{(\eta)}}{r\lambda_e^{(\eta)} \lambda_{\theta}^{(\eta)}} } N_0 \,,\\
				\tilde{\eta}_i^{(\eta)} &= \frac{r \lambda_e^{(\eta)} (\bar{y}_i - \tilde{\eta}_0^{(\eta)}/\sqrt{p})}{\lambda_{\theta}^{(\eta)} + r\lambda_e^{(\eta)}} + \sqrt{\frac{1}{\lambda_{\theta}^{(\eta)} + r\lambda_e^{(\eta)}}} N_i  \; \text{ for } i=1,2,\dots,p \,.
			\end{aligned}
		\end{equation}
		For $\eta \in \mathbb{R}^{p+1}$,
		\[
		\int_{\mathbb{R}^{p+1}} V(\eta') K(\eta,\df \eta') = \frac{1}{p} \sum_{i=1}^{p} \mathbb{E} (\tilde{\eta}_i^{(\eta)} + \bar{y} - \bar{y}_i)^2 + \mathbb{E}(\tilde{\eta}_0^{(\eta)}/\sqrt{p} - \bar{y})^2 \,.
		\]
		Fix $\eta \in \mathbb{R}^{p+1}$. 
		\[
		\begin{aligned}
			&\mathbb{E} \sum_{i=1}^p (\tilde{\eta}_i^{(\eta)} + \bar{y} - \bar{y}_i)^2 \\
			&= \sum_{i=1}^p \mathbb{E} \left[ \frac{r\lambda_e^{(\eta)}}{\lambda_{\theta}^{(\eta)} + r\lambda_e^{(\eta)}  } \left( \bar{y} - \frac{\tilde{\eta}_0^{(\eta)}}{\sqrt{p}} \right) + \frac{\lambda_{\theta}^{(\eta)}}{\lambda_{\theta}^{(\eta)}+ r\lambda_e^{(\eta)}}(\bar{y} - \bar{y}_i) \right]^2  \\
			& \quad + \mathbb{E} \left(\frac{p}{\lambda_{\theta}^{(\eta)} + r\lambda_e^{(\eta)} }\right) \\
			& =  p \mathbb{E} \left[ \frac{r\lambda_e^{(\eta)}}{\lambda_{\theta}^{(\eta)} + r\lambda_e^{(\eta)}  } \left( \bar{y} - \frac{\tilde{\eta}_0^{(\eta)}}{\sqrt{p}} \right) \right]^2 + \mathbb{E} \sum_{i=1}^p \left[ \frac{\lambda_{\theta}^{(\eta)}}{\lambda_{\theta}^{(\eta)}+ r\lambda_e^{(\eta)}}(\bar{y} - \bar{y}_i) \right]^2 \\
			&\quad + \mathbb{E} \left(\frac{p}{\lambda_{\theta}^{(\eta)} + r\lambda_e^{(\eta)} }\right) \,.
		\end{aligned}
		\]
		On the other hand,
		\[
		\mathbb{E}(\tilde{\eta}_0^{(\eta)}/\sqrt{p} - \bar{y})^2 = \frac{1}{p} \mathbb{E} \left( \frac{1}{r\lamee} + \frac{1}{\lamte} \right) \,.
		\]
		Therefore,
		\[
		\begin{aligned}
			&\int_{\mathbb{R}^{p+1}} V(\eta') K(\eta,\df \eta') \\
			&\leq 2 \mathbb{E} (\tilde{\eta}_0^{(\eta)}/\sqrt{p} - \bar{y})^2 + \frac{1}{p}\sum_{i=1}^p (\bar{y}_i - \bar{y})^2 + \mathbb{E} \left(\frac{1}{\lambda_{\theta}^{(\eta)} + r\lambda_e^{(\eta)} }\right) \\
			& \leq \mathbb{E} \left( \frac{p+2}{pr\lambda_e^{(\eta)}} + \frac{2}{p\lambda_{\theta}^{(\eta)}} \right) + \frac{1}{p} \sum_{i=1}^p (\bar{y}_i - \bar{y})^2 \,.
		\end{aligned}
		\]
		
		Assume that $p \geq 2$.
		Then $rp/2+a_2-1 > 0$.
		Note that
		\[
		\begin{aligned}
			&\mathbb{E}\left( \frac{1}{\lamee} \right) \\
			&= \frac{b_2 + \sum_{i=1}^{p} \sum_{j=1}^r (y_{ij} - \eta_0/\sqrt{p} - \eta_i )^2 / 2 }{rp/2 + a_2 - 1} \\
			& = \frac{2b_2 + \sum_{i=1}^p\sum_{j=1}^r (y_{ij} - \bar{y}_i)^2 + r \sum_{i=1}^p (\eta_i + \eta_0/\sqrt{p} - \bar{y}_i)^2  }{rp + 2a_2 - 2} \\
			& \leq \frac{2b_2 + \sum_{i=1}^p\sum_{j=1}^r (y_{ij} - \bar{y}_i)^2 + 2r \sum_{i=1}^p (\eta_i + \bar{y} - \bar{y}_i)^2 + 2pr(\eta_0/\sqrt{p} - \bar{y})^2  }{rp + 2a_2 - 2} \,.
		\end{aligned}
		\]
		Moreover,
		\[
		\begin{aligned}
			\mathbb{E} \left( \frac{1}{\lamte} \right) &= \frac{b_1 + \sum_{i=1}^p \eta_i^2 / 2 }{p/2+a_1 - 1} \\
			&= \frac{2b_1 + \sum_{i=1}^p [\eta_i + \bar{y} - \bar{y}_i + (\bar{y}_i - \bar{y})  ]^2  }{p+2a_1-2} \\
			&\leq \frac{2b_1 + 2\sum_{i=1}^p(\bar{y}_i - \bar{y})^2 + 2\sum_{i=1}^{p} (\eta_i + \bar{y} - \bar{y}_i)^2  }{p+2a_1-2} \,.
		\end{aligned}
		\]
		Collecting terms, we see that
		\[
		\begin{aligned}
			&\int_{\mathbb{R}^{p+1}} V(\eta') K(\eta,\df \eta') \\
			&\leq \left[  \frac{2p+4}{rp+2a_2-2} + \frac{4}{p+2a_1-2} \right] V(\eta) + \frac{(p+2a_1+2)\sum_{i=1}^p(\bar{y}_i - \bar{y})^2 + 4b_1}{p(p+2a_1-2)} \\
			&\quad + \frac{p+2}{pr} \frac{\sum_{i=1}^p\sum_{j=1}^r (y_{ij}-\bar{y}_i)^2 + 2b_2 }{rp+2a_2 - 2} \,. 
		\end{aligned}
		\]
		The result follows immediately from $(E1)$ and $(E2)$.
	\end{proof}

	\subsection{Lemma~\ref{lem:contractrandomeffects}}
	\begin{proof}
		Throughout the proof, assume that $p \geq 4$.
		This will ensure that $1/J_1$ and $1/J_2$ have finite second moments.
		
		Let $\eta = (\eta_0,\eta_1,\dots,\eta_p)^T \in \mathbb{R}^{p+1}$ and $\alpha = (\alpha_0,\alpha_1,\dots,\alpha_p)^T \in \mathbb{R}^{p+1}$.
		It's easy to show that for $t \in [0,1]$,
		\[
		\begin{aligned}
			\frac{\df \lambda_{\theta}^{(\eta+t\alpha)}}{\df t} &= -\frac{\left(\lambda_{\theta}^{(\eta + t\alpha)} \right)^2}{J_1} \sum_{i=1}^p (\eta_i+t\alpha_i) \alpha_i \,,\\
			\frac{\df \lambda_e^{(\eta+t\alpha)}}{\df t} &= -\frac{\left(\lambda_e^{(\eta + t\alpha)} \right)^2}{J_2} \left[ \sum_{i=1}^p \sum_{j=1}^r \left(\eta_i + t\alpha_i + \frac{\eta_0 + t\alpha_0}{\sqrt{p}} - y_{ij} \right)  \left( \alpha_i + \frac{\alpha_0}{\sqrt{p}} \right) \right] \,.
		\end{aligned}
		\]
		Applying Cauchy-Schwarz shows that for each $t \in [0,1]$,
		\begin{equation} \label{ine:dlambda}
			\begin{aligned}
				\left( \frac{\df \lambda_{\theta}^{(\eta+t\alpha)}}{\df t} \right)^2 &\leq \frac{2 \left(\lambda_{\theta}^{(\eta+t\alpha)} \right)^3}{J_1} \|\alpha\|_2^2 \,, \\
				\left( \frac{\df \lambda_e^{(\eta+t\alpha)}}{\df t} \right)^2 &\leq \frac{4r \left(\lambda_e^{(\eta+t\alpha)} \right)^3}{J_2} \|\alpha\|_2^2 \,.
			\end{aligned}
		\end{equation}
		Note that to establish the second inequality, we have used the fact that
		\[
		\sum_{i=1}^p \sum_{j=1}^r \left( \alpha_i + \frac{\alpha_0}{\sqrt{p}} \right)^2 \leq r \sum_{i=1}^p \left( 2\alpha_i^2 + \frac{2\alpha_0^2}{p} \right) = 2r \|\alpha\|_2^2 \,.
		\]
		
		To proceed, we bound the expectation of $(\df \tilde{\eta}_i^{(\eta + t\alpha)}/ \df t)^2$ for $i=0,1,\dots,p$.
		One can verify that for $t \in [0,1]$ and $i=1,2,\dots,p$,
		\[
		\begin{aligned}
			&\frac{\df \tilde{\eta}_0^{(\eta + t \alpha)}}{\df t} \\
			&= -\frac{N_0}{2} \sqrt{\frac{r \lambda_e^{(\eta + t \alpha)} \lambda_{\theta}^{(\eta + t\alpha)}  }{ r\lambda_e^{(\eta + t \alpha)} + \lambda_{\theta}^{(\eta + t\alpha)}  }} \left(  \frac{ \df \lambda_{\theta}^{(\eta+t\alpha)}/\df t  }{\left(\lambda_{\theta}^{(\eta + t\alpha)} \right)^2}   + \frac{  \df \lambda_e^{(\eta+t\alpha)}/ \df t }{r \left(\lambda_e^{(\eta + t\alpha)} \right)^2}  \right) \,,\\
			&\frac{\df \tilde{\eta}_i^{(\eta + t \alpha)}}{\df t} \\
			&= \frac{r (\tilde{\eta}_0^{(\eta+t\alpha)}/\sqrt{p}-\bar{y}_i)}{ \left(\lambda_{\theta}^{(\eta+t\alpha)} + r\lambda_e^{(\eta + t\alpha)} \right)^2} \left( \lambda_e^{\eta+t\alpha} \frac{\df \lambda_{\theta}^{(\eta+t\alpha)} }{\df t} - \lambda_{\theta}^{(\eta + t\alpha)} \frac{\df \lambda_e^{(\eta + t\alpha)} }{\df t} \right) \\
			& \quad - \frac{r \lambda_e^{(\eta+t\alpha)} }{\sqrt{p} \left( \lambda_{\theta}^{(\eta+t\alpha)} + r\lambda_e^{(\eta + t\alpha)} \right)} \frac{\df \tilde{\eta}_0^{(\eta+t\alpha)} }{\df t} - \frac{ \df \lambda_{\theta}^{(\eta+t\alpha)} / \df t + r \df \lambda_e^{(\eta+t\alpha)} / \df t }{2 \left(  \lambda_{\theta}^{(\eta+t\alpha)} + r\lambda_e^{(\eta + t\alpha)}  \right)^{3/2} } N_i \,,
		\end{aligned}
		\]
		where in the second equation, $(\tilde{\eta}_0^{(\eta+t\alpha)}/\sqrt{p}-\bar{y}_i)$ can be further written as 
		\[
		\bar{y} - \bar{y}_i + \sqrt{\frac{\lamt + r\lame}{pr\lame\lamt }} N_0 \,.
		\]
		It then follows from~\eqref{ine:dlambda}  that for $t \in [0,1]$,
		\begin{equation} \nonumber
			\begin{aligned}
				\left( \frac{\df \tilde{\eta}_0^{(\eta + t \alpha)}}{\df t} \right)^2 &\leq \frac{N_0^2}{4} \frac{r \lambda_e^{(\eta + t\alpha)} \lambda_{\theta}^{(\eta + t\alpha)} }{r \lambda_e^{(\eta + t\alpha)} + \lambda_{\theta}^{(\eta + t\alpha)}} \left[ \frac{2}{\left(\lambda_{\theta}^{(\eta + t\alpha)} \right)^4} \left(\frac{\df \lambda_{\theta}^{(\eta+t\alpha)}}{\df t} \right)^2 \right.  \\
				&\quad \left. + \frac{2}{r^2 \left(\lambda_e^{(\eta + t\alpha)} \right)^4} \left(\frac{\df \lambda_e^{(\eta+t\alpha)}}{\df t} \right)^2 \right] \\
				&\leq \left( \frac{N_0^2}{J_1} + \frac{2N_0^2}{J_2} \right) \|\alpha\|_2^2 \,.
			\end{aligned}
		\end{equation}
		It follows that
		\begin{equation} \label{ine:deta0}
			\mathbb{E} \left( \frac{\df \tilde{\eta}_0^{(\eta + t \alpha)}}{\df t} \right)^2 \leq \left( \frac{1}{p/2+a_1-1} + \frac{2}{rp/2+a_2-1} \right) \|\alpha\|_2^2 \,.
		\end{equation}
		Note that conditioning on everything else, $N_i, i=0,1,\dots,p$ are independent.
		Therefore, for $i=1,2,\dots,p$,
		\begin{equation} \label{eq:detai}
			\mathbb{E} \left( \frac{\df \tilde{\eta}_i^{(\eta + t \alpha)}}{\df t} \right)^2 = \mathbb{E}A_{1,i}^2 + \mathbb{E}A_2^2 + \mathbb{E}A_{3,i}^2 \,,
		\end{equation}
		where
		\[
		A_{1,i} =   \frac{r (\bar{y}-\bar{y}_i)}{ \left(\lambda_{\theta}^{(\eta+t\alpha)} + r\lambda_e^{(\eta + t\alpha)} \right)^2} \left( \lame \frac{\df \lambda_{\theta}^{(\eta+t\alpha)} }{\df t} - \lambda_{\theta}^{(\eta + t\alpha)} \frac{\df \lambda_e^{(\eta + t\alpha)} }{\df t} \right)  \,,
		\]
		\[
		\begin{aligned}
			A_2 &=  \frac{r \sqrt{(\lamt+r\lame)/(pr\lame\lamt) } N_0}{ \left(\lambda_{\theta}^{(\eta+t\alpha)} + r\lambda_e^{(\eta + t\alpha)} \right)^2} \\
			& \quad \times \left( \lame \frac{\df \lambda_{\theta}^{(\eta+t\alpha)} }{\df t} - \lambda_{\theta}^{(\eta + t\alpha)} \frac{\df \lambda_e^{(\eta + t\alpha)} }{\df t} \right) \\ & \quad - \frac{r \lambda_e^{(\eta+t\alpha)} }{\sqrt{p} \left( \lambda_{\theta}^{(\eta+t\alpha)} + r\lambda_e^{(\eta + t\alpha)} \right)} \frac{\df \tilde{\eta}_0^{(\eta+t\alpha)} }{\df t}  \\
			& = \frac{ \sqrt{\frac{r\lame}{\lamt}} \left(1+\frac{r\lame}{2\lamt} \right) \frac{\df \lamt}{\df t} - \frac{1}{2} \sqrt{\frac{\lamt}{r\lame}}  \frac{r\df \lame}{\df t} }{\sqrt{p}\left( \lambda_{\theta}^{(\eta + t\alpha)} + r \lambda_e^{(\eta + t\alpha)} \right)^{3/2}}  N_0 \,,
		\end{aligned}
		\]
		and
		\[
		A_{3,i} = - \frac{ \df \lambda_{\theta}^{(\eta+t\alpha)} / \df t + r \df \lambda_e^{(\eta+t\alpha)} / \df t }{2 \left(  \lambda_{\theta}^{(\eta+t\alpha)} + r\lambda_e^{(\eta + t\alpha)}  \right)^{3/2} } N_i \,.
		\]
		
		For now, fix $i \in \{1,2,\dots,p\}$.
		Making use of \eqref{ine:dlambda}, one can verify that
		\[
		\begin{aligned}
			\mathbb{E} A_{1,i}^2 &\leq \mathbb{E} \left\{ \frac{r^2(\bar{y}_i-\bar{y})^2}{ \left( \lamt + r\lame \right)^4 } \left[ 2 \left( \lame \right)^2 \left( \frac{\df \lamt}{\df t} \right)^2  \right.\right. \\
			&\quad \left.\left. + 2 \left( \lamt \right)^2 \left( \frac{\df \lame}{\df t} \right)^2 \right] \right\} \\
			& \leq \mathbb{E} \left[ \frac{r^2(\bar{y}_i-\bar{y})^2}{ \left( \lamt + r\lame \right)^4 } \left(  \frac{4 \left( \lame \right)^2 \left( \lamt \right)^3}{J_1} \right.\right. \\
			&\quad \left.\left. + \frac{8r \left( \lamt \right)^2 \left( \lame \right)^3 }{J_2} \right) \|\alpha\|_2^2 \right] \\
			& \leq  (\bar{y}_i - \bar{y})^2 \|\alpha\|_2^2 \; \mathbb{E} \left\{ \left[ \left(\frac{\lamt}{r\lame} \right)^2 \wedge 1 \right] \frac{4\lamt}{J_1} + \frac{8\lamt}{J_2} \right\} \,.
		\end{aligned}
		\]
		For $\eta' = (\eta'_0, \eta'_1, \dots, \eta'_p)^T \in \mathbb{R}^{p+1}$, let
		\[
		b_2^{(\eta')} = \frac{J_2}{\lambda_e^{(\eta')}} = b_2 + \frac{1}{2} \sum_{i'=1}^p \sum_{j=1}^r (y_{i'j}-\bar{y}_{i'})^2 + \frac{r}{2} \sum_{i'=1}^p (\eta'_{i'} + \eta'_0/\sqrt{p} - \bar{y}_{i'})^2 \,,
		\]
		and recall that $\lamt \leq J_1/b_1$.
		Then
		\begin{equation} \label{ine:da1}
			\begin{aligned}
				\mathbb{E} A_{1,i}^2 &\leq (\bar{y}_i - \bar{y})^2 \|\alpha\|_2^2 \left\{ \frac{4}{b_1} \left[ \left(\mathbb{E}\frac{\left(b_2^{(\eta+\alpha t)}\right)^2 J_1^2}{r^2 b_1^2 J_2^2} \right)
				\wedge 1 \right] + \frac{8(p+2a_1)}{b_1(rp + 2a_2 - 2)} \right\} \\
				&= (\bar{y}_i - \bar{y})^2 \|\alpha\|_2^2 \left\{ \frac{4}{b_1} \left[ \frac{\left( b_2^{(\eta+t\alpha)} \right)^2 (p/2+a_1)(p/2+a_1+1)  }{r^2b_1^2 (pr/2+a_2-1)(pr/2+a_2-2)}
				\wedge 1 \right]  \right. \\
				&\quad \left. + \frac{8(p+2a_1)}{b_1(rp + 2a_2 - 2)} \right\} \,.
			\end{aligned}
		\end{equation}
		Note that we  have used the fact that for two random variables $Z_1$ and $Z_2$, $\mathbb{E}(Z_1 \wedge Z_2) \leq \mathbb{E}Z_1 \wedge \mathbb{E} Z_2$.
		Again by~\eqref{ine:dlambda},
		\begin{equation} \label{ine:da2}
			\begin{aligned}
				\mathbb{E}A_2^2 &\leq \mathbb{E} \left[ \frac{\frac{2r\lame}{\lamt} \left( 1 + \frac{r\lame}{2\lamt} \right)^2 \left( \frac{\df \lamt}{\df t} \right)^2 + \frac{1}{2} \frac{\lamt}{r\lame} \left( \frac{r\df \lame}{\df t} \right)^2  }{p \left(\lamt + r\lame \right)^3} \right]  \\
				&\leq \mathbb{E} \left( \frac{4}{pJ_1} + \frac{2}{pJ_2} \right) \|\alpha\|_2^2 \\
				&= \left(\frac{4}{p/2+a_1-1} + \frac{2}{rp/2+a_2-1}\right) \frac{\|\alpha\|_2^2}{p} \,.
			\end{aligned}
		\end{equation}
		Finally,
		\begin{equation} \label{ine:da3}
			\begin{aligned}
				\mathbb{E}A_{3,i}^2 &= \mathbb{E} \left[ \frac{2 \left( \df \lamt/\df t \right)^2 + 2r^2 \left( \df \lame / \df t\right)^2 }{4 \left( \lamt + r\lame \right)^3}  \right] \\
				& \leq \mathbb{E} \left[  \left( \frac{\lamt}{r\lame} \wedge 1 \right)  \frac{1}{J_1} + \frac{2}{J_2} \right] \|\alpha\|_2^2 \\
				& \leq \mathbb{E} \left[ \left( \frac{ b_2^{(\eta+t\alpha)} J_1  }{b_1rJ_2} \wedge 1 \right) \frac{1}{J_1} + \frac{2}{J_2} \right] \|\alpha\|_2^2 \\
				&=  \left[ \left( \frac{ 2b_2^{(\eta+t\alpha)}  }{b_1r (rp+2a_2-2)} \wedge \frac{2}{p + 2a_1 - 2} \right)  + \frac{4}{rp+2a_2-2} \right] \|\alpha\|_2^2 \,.
			\end{aligned}
		\end{equation}

		Combining \eqref{ine:deta0}, \eqref{eq:detai}, \eqref{ine:da1}, \eqref{ine:da2}, and \eqref{ine:da3} shows that
		\begin{equation}
			\begin{aligned} \label{ine:contractrandomeffects2}
				\left(\mathbb{E} \left\| \frac{\df}{\df t} f(\eta+t(\eta'-\eta)) \right\|_2 \right)^2 
				& \leq  \mathbb{E} \left[ \sum_{i=0}^{p} \left( \frac{\df \tilde{\eta}_i^{(\eta + t(\eta'-\eta))}}{\df t} \right)^2 \right] \\
				&\leq \varrho^{(\eta + t(\eta'-\eta))} \|\eta'-\eta\|_2^2 \,,
			\end{aligned}
		\end{equation}
		where
		\[
		\begin{aligned}
			\varrho^{(\eta + t(\eta'-\eta))} = &   \left\{ \frac{4}{b_1} \left[ \frac{\left( b_2^{(\eta+t(\eta'-\eta))} \right)^2 (p/2+a_1)(p/2+a_1+1)  }{r^2b_1 (pr/2+a_2-1)(pr/2+a_2-2)}
			\wedge 1 \right] \right. \\
			& \left. + \frac{8(p+2a_1)}{b_1(rp + 2a_2 - 2)} \right\} \sum_{i=1}^p(\bar{y}_i-\bar{y})^2  \\
			&+ \frac{5}{p/2+a_1-1} + \frac{4}{rp/2 + a_2 - 1}  \\
			& + \left( \frac{ 2b_2^{(\eta+t(\eta'-\eta))} p  }{b_1r (rp+2a_2-2)} \wedge \frac{2p}{p + 2a_1 - 2} \right)  + \frac{4p}{rp+2a_2-2} \,.
		\end{aligned}
		\]
		It's easy to verify that, under $(E1)$ and $(E2)$, $\sup_{\eta \in \mathbb{R}^{p+1}} \varrho^{(\eta)}/p$ is bounded. 
		Now, for $\eta \in \mathbb{R}^{p+1}$, the drift function
		\[
		V(\eta) = \frac{1}{p} \sum_{i=1}^{p} (\eta_i + \bar{y} - \bar{y}_i)^2 + (\eta_0/\sqrt{p} - \bar{y})^2
		\]
		is a non-negative definite quadratic form involving the vector $(\eta_0 - \sqrt{p}\bar{y}, \eta_1 + \bar{y} - \bar{y}_1, \dots, \eta_p + \bar{y} - \bar{y}_p )^T$.
		Therefore,~$V$ is convex.
		Let $(\eta,\eta')$ be in $C$, then $V(\eta)$ and $V(\eta')$ are both less than $p^{\delta/2}$. 
		By convexity, for $t \in [0,1]$, $V(\eta + t(\eta'-\eta)) \leq p^{\delta/2}$.
		In this case,
		\[
		\begin{aligned}
			b_2^{(\eta + t(\eta'-\eta))} &= b_2 + \frac{1}{2} \sum_{i=1}^p \sum_{j=1}^r (y_{ij}-\bar{y}_i)^2 \\
			&\qquad \, + \frac{r}{2} \sum_{i=1}^p \left\{ [\eta_i + t(\eta'_i-\eta_i)]+ [\eta_0 + t(\eta'_0-\eta_0)]/\sqrt{p} - \bar{y}_i\right\}^2 \\
			&\leq b_2 + \frac{1}{2} \sum_{i=1}^p \sum_{j=1}^r (y_{ij} - \bar{y}_i)^2 + rpV(\eta + t(\eta'-\eta)) \\
			& \leq b_2 + \frac{1}{2} \sum_{i=1}^p \sum_{j=1}^r (y_{ij} - \bar{y}_i)^2 + rp^{1+\delta/2} \,.
		\end{aligned}
		\]
		Note that the last line is independent of $(\eta,\eta',t)$, and of order $rp^{1+\delta/2}$.
		This implies that, for $(\eta,\eta') \in C$, $\sup_{t \in [0,1]} \varrho^{(\eta + t(\eta'-\eta))}$ can be bounded above by a quantity that's independent of $(\eta,\eta')$, and of order $p^{3+\delta}/r^2$, which tends to~$0$ as $p \to \infty$.
		It follows from~\eqref{ine:contractrandomeffects2} that, when~$p$ is sufficiently large,
		\begin{equation} \nonumber
			\sup_{t \in [0,1]} \mathbb{E} \left\| \frac{\df}{\df t} f(\eta+t(\eta'-\eta)) \right\|_2 \leq \begin{cases}
				\gamma \|\eta' - \eta\|_2 & (\eta,\eta') \in C \\
				\gamma_0 \|\eta' - \eta\|_2 & \text{otherwise}
			\end{cases} 
		\end{equation}
		holds with $\gamma_0/\sqrt{p}$ bounded and $\gamma \to 0$.
	\end{proof}

\end{document}